\documentclass[10pt]{article}
\usepackage{amssymb,amsmath,textcomp}
\usepackage{enumerate}
\usepackage{hyperref}
\usepackage{mathrsfs}
\usepackage{amsthm}
\usepackage{xcolor}
\title{A note on application of mean-field limit to non-exchangeable non-conservative systems}

\author{Piotr Gwiazda\footnote{Institute of Mathematics, Polish Academy of Sciences
\'{S}niadeckich 8, 00-656 Warsaw, Poland,\\
 Interdisciplinary Centre for Mathematical and Computational Modelling, 
University of Warsaw, 
Tyniecka 15/17, 02-630 Warsaw, Poland, }, Katarzyna Ryszewska \footnote{Department of Mathematics and Information Sciences,
Warsaw University of Technology,
Koszykowa 75, 00-662 Warsaw, Poland, \\
 Interdisciplinary Centre for Mathematical and Computational Modelling, 
University of Warsaw, 
Tyniecka 15/17, 02-630 Warsaw, Poland, 
Katarzyna.Ryszewska@pw.edu.pl \\
Both authors were partially supported by National Science Center, Poland through 
UMO-2024/54/A/ST1/00159 Grant.
}}
\newtheorem{theo}{Theorem}
\newtheorem{defi}{Definition}
\newtheorem{prop}{Proposition}
\newtheorem{lemma}{Lemma}
\newtheorem{coro}{Corollary}
\newtheorem{remark}{Remark}

\def\divv{\operatorname {div}}

\def\diam{\operatorname{diam}}
\DeclareMathOperator*{\esssup}{ess~sup}

\newcommand{\eqq}[2]{\begin{equation}  #1  \label{#2}\end{equation}    }
\newcommand{\hd}{\hspace{0.2cm}}

\newcommand{\m}[1]{\mbox{#1}}
\newcommand*{\norm}[1]{\left\Vert{#1}\right\Vert}
\newcommand*{\abs}[1]{\left\vert{#1}\right\vert}
\newcommand{\vf}{\varphi}
\newcommand{\ve}{\varepsilon}
\newcommand{\R}{\mathbb{R}}
\newcommand{\nic}[1]{ }
\newcommand{\io}{\int_{\Omega}}

\newcommand{\mt}{\mathcal{M}_{t_{*}}}
\newcommand{\mb}{\overline{\mathcal{M}}}
\newcommand{\mub}{\overline{\mu}}
\newcommand{\nub}{\overline{\nu}}
\newcommand{\mubd}{\overline{\mu}_{.}}
\newcommand{\nubd}{\overline{\nu}_{.}}

\newcommand{\izj}{\int_{0}^{1}}
\newcommand{\ird}{\int_{\mathbb{R}^d}}
\newcommand{\al}{\alpha}
\newcommand{\ini}{[\frac{i-1}{n},\frac{i}{n}]}

\newcommand{\tx}{\tilde{X}}
\newcommand{\nx}{X}

\newcommand{\nc}{M}
\newcommand{\sn}{\frac{1}{N}\sum_{i=1}^N}
\newcommand{\xb}{\bar{X}}
\newcommand{\Mb}{\bar{M}}

\newcommand{\p}{\mathbb{P}}
\newcommand{\E}{\mathbb{E}}
\newcommand{\Tree}{\mathcal{T}}

\begin{document}
\date{}
\maketitle
\vspace{1cm}

\begin{abstract}
In this paper we apply the newly developed theory of extended graphons to a certain class of non-exchangeable, non-conservative problems. We establish the mean-field limit by proving the convergence of the associated generalized weighted empirical measures.
\end{abstract}
\vspace{0.7em}
\begin{center}
{\bf AMS subject classification:} 35Q83, 35Q70, 35R02, 35Q49, 35R06
\end{center}

\noindent{\bf Keywords:} mean-field limit, interacting particles, non-exchangeable systems, extended graphons, non-conservative problems

\section{Introduction}

Recently, models of interacting particle dynamics have gained considerable attention, due to their vast applications in various fields, including flocking, cell organization in tissues, neural networks and opinion formation. A common assumption in the literature is that $N$ particles exhibit pairwise interactions which are modeled by a connectivity matrix $(w_{ij}^N)_{i,j=1}^N$. Since modeling a  system with a huge number of particles is often computationally intractable, it is common to study the mean-field limit as $N\rightarrow \infty$ instead. In this setting we do not keep track of individual particles, but we approximate the evolution of density of the whole population by a limiting density, as the number of particles tends to infinity. In this paper we focus on, so called, non-exchangeable setting, which accounts for particle heterogeneity. It means that the particles are not treated as identical, each of them may have individual characteristics and interactions, while the exchangeable setting corresponds to $w_{ij}^N=1/N$. Consequently, the non-exchangeable setting presents a significantly more challenging scenario and the mathematical methods are still in development. The first results regarding establishing the mean-filed limit in non-exchangeable setting were proven in case of a priori knowledge of the limiting graphon  \cite{CM2019} - \cite{CMM2018}, \cite{KVM2018}. Conceptually, a graphon is a limit object $w:[0,1]\times[0,1]\rightarrow \R$ that encapsulates the asymptotic connectivity of a graph sequence. Subsequently, in \cite{BCN} the results were extended to accommodate dense graphs, without a priori knowledge of the limiting object. This extension is possible because the sequence of underlying discrete interaction matrices in dense networks naturally admits a graphon as its continuous limit. We refer to  \cite{Lovasz}-\cite{LovaszSzegedy} for a comprehensive study of graph limits. Several extensions of graphons have been developed to encompass broader classes of graphs. Here we mention $L^p$ - graphons \cite{BCCZ}-\cite{BCCZ2},  graphops and digraph measures, which were adopted across multiple studies in the context of mean-field limit with notable examples including \cite{Kuehn}, \cite{GC2022}. For a comprehensive review of the development of the theory of mean-field limit we refer to \cite{Aover} and \cite{ADover}. \\
One of the breakthrough results in the area of non-exchangeable mean-field limit
with possibly sparse graphs have been recently obtained in \cite{JPS2025}. The authors introduce the new concept of observables indexed by trees. It allows to pass to the mean-field limit under very general assumptions on the sequence of connectivities $(w_{ij}^N)_{i,j=1}^N$. The mathematical tools developed in \cite{JPS2025} (where a toy model is considered) are subsequently employed in \cite{JZ2025} to establish the mean-field limit for biological neuron networks based
on the so-called stochastic integrate-and-fire  dynamics. 

The aim of this paper is to extend the mathematical methods introduced in \cite{JPS2025} to cover also non-conservative systems. To the best of our knowledge the mean-field limit theory has not been yet studied from such perspective. We anticipate that our approach may find valuable applications in biology, particularly in  problems governed by balance laws. Other potential application come from the area of opinion formation. On the canonical probabilistic space $(\R^d,\mathcal{B}(\R^d),\p)$ we consider the following system of equations for fixed $t_{*}>0$
\begin{equation}\label{part}
\begin{array}{lll}
(i) & \partial_t X_i(t) = \sum_{j=1}^N M_j(t)w^N_{ij}K_{1}(X_i(t)-X_j(t)) & t \in (0,t_*),\\
(ii) & \partial_t M_i(t) = M_i(t)\left[A-\sum_{j=1}^N M_j(t)w^N_{ij}K_{2}(X_i(t)-X_j(t))\right] & t \in  (0,t_*),\\
(iii) & X_i(0) = X_{i}^0, \hd  M_i(0) = M_{i}^0,
\end{array}
\end{equation}
for $i=1,\dots, N$, $N \in \mathbb{N}$. Here, a constant $A>0$, kernels $K_1,K_2$ and a sequence of matrices $\{w_{ij}^N\}_{i,j=1}^N$ are given deterministic data and the initial conditions  $X_i^0:\mathbb{R}^d \rightarrow \mathbb{R}^d $ and $M_i^0:\mathbb{R}^d\rightarrow \mathbb{R}_{+}$ are given random variables. Here and henceforth we denote $\R_{+}:=[0,\infty)$. Note that, we suppress the index $N$ in the notation $X_i,M_i$ for the sake of readability.

The system (\ref{part}) with identical $w_{ij}^N = 1/N$ is a particular case of exchangeable problems studied in \cite{AD2021}, \cite{D2022} and \cite{MPD2019}. In this context, $X_i$ represents the vector of opinions of $i$-th agent and $M_i$ its charisma (the influence which it has on other agents). 
 The problem discussed in \cite{AD2021}, \cite{D2022} and \cite{MPD2019} may be also regarded as a particular case of a problem with adaptive dynamical network, i.e. $\tilde{w}_{ij}(t) =\frac{1}{N} M_j(t)$. For the results regarding mean-field limit for general non-exchangeable adaptive network we refer to recent papers \cite{A2026}, \cite{Poyato2026}, \cite{Throm2},  \cite{Zhou} for dense graph setting and to \cite{GKX2025}, where the authors employ the theory of digraph measures to tackle a more general framework.

While existing literature on systems of type (\ref{part}) (where the evolution of $M_i$ has been modeled using diverse dynamics across different studies)
 is restricted either to the exchangeable regime or to somewhat strong assumptions on the underlying graph, our primary novelty lies in proving the mean-field limit under minimal conditions, matching the framework introduced in \cite{JPS2025}.   Secondly, unlike in \cite{AD2021} we do not assume that the total weight of all $M_i$ is constant, which leads us to non-conservative equation in the mean field limit (\ref{meana}). This allows us to capture phenomena that cannot be described by existing models. In the context of opinion formation, it may be interpreted as allowing the possibility of polarization and a sharp intensification of sentiment. Although system (\ref{part}) represents an idealized toy model, it establishes a framework for future generalizations. Our next target, is the application of the developed ideas to problems with general adaptive weights \cite{inprep}.

In this paper we closely follow the approach introduced in seminal paper \cite{JPS2025} and adapt it to the new non-conservative framework. 
We establish the convergence of empirical measure  $\sn M_{i}(t,x)\delta_{X_i(t,x)}$ to the solution of continuous problem (\ref{meana}). We point out that such empirical measure (in this case  probabilistic),  has been used already in \cite{AD2021}.   Discussing the mixed object $M_{i}(t,x)\delta_{X_i(t,x)}$  enables us to obtain the propagation of independence, a property that is highly desirable, yet generally challenging to achieve. To arrive at this result, we establish the extension of Glivenko-Cantelli lemma for empirical measures with non-conservative mass. 

 Below we collect the assumptions that we impose on deterministic data and random initial conditions. Note that, with respect to regularity of $K_1$ and the Sobolev regularity of initial conditions, we adopt the assumptions from  \cite{JPS2025}, since improving these baseline conditions lies beyond the scope of our work. \\

Let us assume that for every $i\neq j$ the pairs $(X_{i}^0,M_i^0)$ and $(X_{j}^0,M_j^0)$ are independent.    Furthermore, let
\eqq{K_1 \in W^{1,\infty}(\R^d;\R^d), \hd \hd K_2 \in W^{1,\infty}(\R^d;\R), \hd \hd  K_2 \geq 0.}{asK}

Regarding interaction weights $w_{ij}^N$, we impose the minimal assumptions introduced in \cite{JPS2025} supplemented with nonnegativity assumption: 
\eqq{\max_{1\leq i,j \leq N}w^N_{ij} \rightarrow 0 \m{ as } N\rightarrow \infty \m{ and } w^N_{ij} \geq 0 \m{ for every } i,j = i,\dots, N,}{aswl}
there exists $C_w$ such that 
\eqq{\sup_{N \in \mathbb{N}}\max_{1\leq i\leq N}\sum_{j=1}^Nw^N_{ij} \leq C_w, \hd \hd \sup_{N \in \mathbb{N}}\max_{1\leq j\leq N}\sum_{i=1}^Nw^N_{ij} \leq C_w. }{aswb}

Concerning the initial data we assume that there exist finite numbers $m,M > 0$ such that 
\eqq{
\sup_{N \in \mathbb{N}}\max_{1\leq i \leq N}\sup_{x\in \R^d}M_i^0 \leq M, \hd \hd \inf_{N \in \mathbb{N}}\min_{1\leq i \leq N} M_i^0 \geq 0  \hd \p \hd a.e., 
}{M}
\eqq{
 \inf_{N \in \mathbb{N}}\min_{1\leq i \leq N}\E M^0_i \geq m > 0.
}{Mb}
Finally we assume that the joint density $f_{(X_i^0,M_i^0)}$ of $X^0_i$ and $M^0_i$ exists for every $i=1,\dots, N$ and satisfy the following:
\eqq{
  \sup_{N\in \mathbb{N}}\max_{1\leq i \leq N} \norm{g_i}_{W^{1,1}(\R^d)\cap W^{1,\infty}(\R^d)} < \infty, \hd \m{ where } \hd  g_i(x):= \int_{0}^{\infty} y f_{(X_i^0,M_i^0)}(x,y)dy.
}{asf}
\begin{remark}
    Note that, the assumption (\ref{asf}) simplifies significantly if we assume that for every $i \in {1,\dots, N}$, $N\in \mathbb{N}$, the random variables $ X^0_i$ and $M^0_i$ are independent. Then, denoting by $f_{X^0_i}$ the law of $X^0_i$ we arrive at
    \[
    g_i(x) =  f_{X^0_i}(x)  \mathbb{E} M^0_i
    \]
    and due to 
(\ref{M}), the assumption (\ref{asf}) reduces to 
    \[
    \sup_{N\in \mathbb{N}}\max_{1\leq i \leq N} \norm{f_{X^0_i}}_{W^{1,1}(\R^d)\cap W^{1,\infty}(\R^d)} < \infty,
    \]
    which coincides with the assumption for conservative problem imposed in \cite{JPS2025}.
\end{remark}

Let us denote by $d_F$ the flat metric on the space of positive Radon measures defined on $\R^d$, i.e.
\[
d_F(\mu,\nu) = \sup \left\{\abs{\ird \phi(x)d(\mu(x)-\nu(x))}: \phi \in W^{1,\infty}(\R^d); \norm{\phi}_{L^{\infty}(\R^d)} \leq 1, \norm{\nabla\phi}_{L^{\infty}(\R^d)}\leq 1\right\}.
\]

In the whole paper the time derivative is always denoted by $\partial_t$ and all other differential operators refer to the space variable, thus we omit the subscript $x$ and denote simply $\nabla,\divv$ etc. Furthermore, similarly as in \cite{JPS2025} while integrating with respect to measure, we use the short notation   $f(dx)$ which we understand as $df(x)$. It is particularly useful if we deal with multi variable objects.
Below we present the main result of this paper. Note that the space $L^\infty_\xi \mathcal{M}_{\zeta}$ is introduced in Definition \ref{space}.

  \begin{theo} \label{maintheorem}
Assume (\ref{asK}) - (\ref{asf}) and let $(X^0_i,M^0_i)$ and $(X^0_j,M^0_j)$ be independent  for every $i\neq j$. Moreover, let  
\eqq{
\sup_{N\in \mathbb{N}} \sup_{1\leq i\leq N}\mathbb{E}[\vert X_i^0\vert^2]<\infty.
}{secmo}
Finally, let $(X_i(t), M_i(t))_{i=1}^N$ be a solution to (\ref{part}), given by Lemma \ref{exi}. 
Then, there exist $w\in L^\infty_\xi \mathcal{M}_\zeta\cap L^\infty_\zeta \mathcal{M}_\xi$ and $f\in L^\infty((0, t_*)\times(0, 1);W^{1,1}\cap W^{1,\infty}(\R^d))$, such that $f$ is a weak solution to
\eqq{
\partial_t f(t,x,\xi) + \divv ( f(t,x,\xi) V_1[f](t,x,\xi)) =  f(t,x,\xi) \left[A-V_2[f](t,x,\xi)\right], 
}{meana}
where for $j=1,2$
\[
V_j[f](t,x,\xi) = \izj \ird K_{j}(x-y)f(t,y,\zeta)dy w(\xi,d\zeta)
\]
 and, up to the extraction of a subsequence,
\eqq{
\sup_{0\leq t\leq t_*}\,\mathbb{E}\,d_{F}\left(\int_0^1 f(t,\cdot,\xi)\,d\xi,\;\sn M_{i}(t,x)\delta_{X_i(t,x)}(\cdot)\right)\to 0, \m{ as } N\rightarrow \infty.
}{finalfinal}
 \end{theo}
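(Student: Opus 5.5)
Following \cite{JPS2025}, the plan is to pass through an intermediate ``propagated independence'' system and tree-indexed observables, adapted to the mass variable $M_i$.

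\textbf{Step 1 (a priori bounds and the intermediate system).} From (\ref{part})(ii), $K_2\ge 0$, $M_j\ge 0$ and (\ref{M}), I get $0\le M_i(t)\le Me^{At_*}$ on $[0,t_*]$. Then (\ref{aswb}) bounds all velocities and growth rates uniformly in $N$. Next I introduce the auxiliary system $(\xb_i,\Mb_i)$. It has the same initial data, but the interaction is replaced by its conditional expectation:
\[
\partial_t\xb_i=\sum_j w_{ij}^N\,\E\!\left[\Mb_j K_1(x-\xb_j)\right]\big|_{x=\xb_i},
\qquad
\partial_t\Mb_i=\Mb_i\Big[A-\sum_j w^N_{ij}\,\E\!\left[\Mb_jK_2(x-\xb_j)\right]\big|_{x=\xb_i}\Big].
\]
Equivalently, each pair $(\xb_i,\Mb_i)$ is driven by deterministic fields built from $g_j(t,x):=\int y\,f_{(\xb_j,\Mb_j)}(t,x,y)\,dy$. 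Hence independence of the pairs propagates trivially. The functions $g_i$ solve the linear balance law $\partial_t g_i+\divv(g_i V^i_1)=g_i(A-V^i_2)$. Using (\ref{asK}), (\ref{asf}) and Grönwall, I propagate uniform $W^{1,1}\cap W^{1,\infty}$ bounds on $g_i(t)$.

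\textbf{Step 2 (coupling estimate).} I compare $(X_i,M_i)$ with $(\xb_i,\Mb_i)$ through $D(t)=\max_i \E\big(|X_i-\xb_i|+|M_i-\Mb_i|\big)$. The difference of the drifts splits into two pieces. The first is a Lipschitz part, bounded by $C_w\,\mathrm{Lip}(K)\,D$. The second is a fluctuation term
\[
\sum_j w_{ij}^N\big(\Mb_jK(\xb_i-\xb_j)-\E[\Mb_jK(x-\xb_j)]_{x=\xb_i}\big).
\]
Since the $j\ne i$ summands are independent and centered conditionally on $\xb_i$, its second moment is at most $C\sum_j (w_{ij}^N)^2\le C\max_{ij}w_{ij}^N\cdot C_w\to0$ by (\ref{aswl}). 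The bounded mass makes this work exactly as in the conservative case. For the $M$-equation I use that the bracket is bounded, so $|M_i-\Mb_i|$ obeys a linear Grönwall inequality. This gives $D(t)\to 0$ uniformly on $[0,t_*]$. Consequently $\E\, d_F\big(\sn M_i\delta_{X_i},\sn \Mb_i\delta_{\xb_i}\big)\le D(t)\to0$, because test functions in $d_F$ are bounded and $1$-Lipschitz.

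\textbf{Step 3 (non-conservative Glivenko–Cantelli).} I show that $\E\, d_F\big(\sn \Mb_i\delta_{\xb_i},\sn g_i(t,\cdot)\big)\to0$. The atoms are independent, with bounded weights, $\E[\Mb_i\phi(\xb_i)]=\int\phi g_i$, and uniform second moments coming from (\ref{secmo}) plus bounded velocity. I would truncate to a ball, cover it by a finite $\varepsilon$-net of Lipschitz test functions (Arzelà–Ascoli), apply the variance bound $O(1/N)$ on each, and control the tails by the second moments.

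\textbf{Step 4 (identification of the limit; the main obstacle).} It remains to show that $\sn g_i(t)$ converges, along a subsequence, to $\int_0^1 f(t,\cdot,\xi)d\xi$ with $f$ solving (\ref{meana}) for some extended graphon $w$. This is where I expect the difficulty. As in \cite{JPS2025}, I consider tree-indexed observables
\[
\tau_N(T)(t,x_{1..|T|})=\frac1N\sum_{i_1..}\prod_{(k,l)\in T} w^N_{i_ki_l}\prod_k g_{i_k}(t,x_k).
\]
These are uniformly bounded, thanks to (\ref{aswb}) and the Sobolev bounds of Step 1, and they satisfy a closed hierarchy obtained from the $g_i$ equations. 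The new feature is the reaction term $g_i(A-V_2)$: it produces extra tree-growth terms, of the same structure as the transport term but without a divergence. This enlarges the hierarchy but keeps it closed. By compactness (diagonal extraction over countably many trees), I get limits $\tau_\infty(T)$. I then invoke the representation theorem from the paper's extended-graphon section to write $\tau_\infty(T)=\tau_{w,f}(T)$, for some $w\in L^\infty_\xi\mathcal M_\zeta\cap L^\infty_\zeta\mathcal M_\xi$ and $f$ whose time evolution is (\ref{meana}). The single-vertex tree gives $\sn g_i(t)\to\int_0^1 f(t,\cdot,\xi)d\xi$ in $d_F$. Equicontinuity in $t$, from the bounded drifts, upgrades this to uniformity in $t$. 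Combining Steps 2–4 with the triangle inequality yields (\ref{finalfinal}).
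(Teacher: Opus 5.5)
Steps 1--3 are sound and follow the paper's propagation-of-independence argument (system (\ref{part2}), Lemma \ref{exest}). Your Glivenko--Cantelli step differs from the paper's: truncation to $B_R$, a finite $\varepsilon$-net of the bounded-Lipschitz ball in $C(\overline{B_R})$, the $O(1/N)$ variance bound for each net element, and tails from second moments. This is a simpler, rate-free substitute for the chaining argument of Lemma \ref{GC}, and it does not even use (\ref{Mb}).

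The gap is in Step 4. Theorem \ref{lg} is a static compactness statement: for time-independent $f_N$ it gives a subsequence and some pair $(w,f)$ with $\tau(T,w_N,f_N)\to\tau(T,w,f)$. Applied at each time, it returns a pair $(w_t,f_t)$ and a subsequence that may depend on $t$. Relabelling $\xi$ by a measure-preserving bijection in both $w$ and $f$ leaves every $\tau(T,w,f)$ unchanged. So nothing forces the $f_t$ to be linked across times by (\ref{meana}) with one fixed $w$.

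Passing to the limit in your (linear) hierarchy does not repair this. It only tells you that $\tau_\infty$ solves the inviscid hierarchy with datum $\tau(\cdot,w,f^0)$. To conclude $\tau_\infty(t)=\tau(\cdot,w,f(t))$, with $f$ the solution of (\ref{meana}) from $(w,f^0)$ given by Proposition \ref{existenceprop}, you would need uniqueness for that hierarchy. This is not available. For $\nu=0$ the $L^2$ estimate for (\ref{hiergeneq}) loses a derivative through $\divv_{x_i}\int K_1(x_i-y)h_{T+i}\,dy$. That is why Lemma \ref{hlambdabound} requires $\nu>0$, and its constants blow up as $\nu\to 0$. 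So your sentence ``for some $w$ and $f$ whose time evolution is (\ref{meana})'' asserts precisely what has to be proved.

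The paper never passes to the limit in the hierarchy. It proceeds as follows:
\begin{enumerate}
\item It applies Theorem \ref{lg} only at $t=0$. It then upgrades the local convergence to $\|\tau(\cdot,w,f^0)-\tau(\cdot,w_N,f^0_N)\|_\lambda\to0$, getting global $L^2$ convergence from tightness via (\ref{secmo}).
\item It observes that $\bar f_N=\sum_i\tilde f_i\,\mathbb{I}_{[\frac{i-1}{N},\frac{i}{N})}$ is itself a weak solution of (\ref{meana}) with $w_N$.
\item It applies the quantitative stability Lemma \ref{mainstab} at finite $N$. There, $f$ and $\bar f_N$ are compared with their viscous regularizations at the PDE level, an $O(\sqrt\nu)$ error obtained using Lemma \ref{linftyestf}.
\item Lemma \ref{hlambdabound} is applied to the difference of the two viscous tree hierarchies $\tau(\cdot,w,f^\nu)-\tau(\cdot,w_N,\bar f_N^\nu)$.
\item Finally $\nu$ is optimized, giving a bound of order $(\log|\log\|\tau(\cdot,w,f^0)-\tau(\cdot,w_N,f^0_N)\|_\lambda|)_+^{-1/2}$, uniform in $t$.
\end{enumerate}
The $\sqrt\nu$ comparison is carried out on $f$ and $\bar f_N$ themselves, so it relies on the tree form of both objects, which a generic limit $\tau_\infty$ lacks. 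This stability mechanism at finite $N$, with the vanishing-viscosity trick, is the ingredient your Step 4 is missing.
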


\begin{remark}
    From the proof of \cite[Theorem 5.1]{JPS2025} (pages 722-724) it follows that the extended graphon $w$ in the theorem above is a weak$^*$ limit in $L^\infty_\xi \mathcal{M}_\zeta\cap L^\infty_\zeta \mathcal{M}_\xi$ of a properly selected subsequence of $w^N(\Phi^N(\cdot),\Phi^{N}(\cdot))$, where
    \[
    w^N(\xi,\eta):=N \sum_{i,j=1}^N w_{ij}^N \mathbb{I}_{[\frac{i-1}{N},\frac{i}{N})}(\xi) \mathbb{I}_{[\frac{j-1}{N},\frac{j}{N})}(\eta)
    \]
and $\Phi^{N}:[0,1]\rightarrow [0,1]$ is a properly chosen sequence of a.e. injective, measure preserving maps.
\end{remark}

Before tackling the full complexity of the mean-field limit via the extended graphon framework, we first present a simplified sketch of the argument under the assumption of an a priori known, bounded graphon in purely deterministic case. We find it instructive to first examine the underlying mechanisms of the result within a simplified setting. Consequently, we dedicate Chapter 2 to the exposition of the simpler case. Note that in this setting, even if we assume that the initial conditions are independent random variables rather than deterministic, we do not need to apply a propagation of independence argument. Indeed, since we assume the existence of an apriori known bounded graphon, the empirical measure (\ref{empfin}) is well approximated by the auxiliary measure $\overline{\nu}_{n,m,t}$, which solves the closed equation (\ref{fixed}) with $w=w^n$ being a discretization of a bounded graphon. Consequently, the stability results for the limiting equation given in Proposition \ref{inistab} and Proposition \ref{kernelstab} yield the desired result. In contrast, in the general framework of Theorem \ref{maintheorem}, we rely on a tree-indexed hierarchy where propagation of independence is necessary to factorize the joint measures.
The proof of Theorem \ref{maintheorem} will be carried in two steps presented in Chapter~3 and Chapter~4.  Chapter~3 is devoted to the proof of propagation of independence. In Chapter~4 we adapt the theory of extended graphons introduced in \cite{JPS2025} to our non-conservative setting, which leads to the proof of Theorem~\ref{maintheorem}. We postpone the proofs of technical results to the Appendix in Chapter~5.   

\section{The result in the case of a bounded graphon}
Let us consider the system 
\begin{equation}\label{part1V}
\begin{array}{lll}
(i) & \partial_t X_i(t) = \frac{1}{N}\sum_{j=1}^N M_j(t)w^N_{ij}K_{1}(X_i(t)-X_j(t)) & t \in (0,t_*),\\
(ii) & \partial_t M_i(t) = M_i(t)\left[A-\frac{1}{N}\sum_{j=1}^N M_j(t)w^N_{ij}K_{2}(X_i(t)-X_j(t))\right] & t \in  (0,t_*),\\
(iii) & X_i(0) = X_{i}^0, \hd  M_i(0) = M_{i}^0,
\end{array}
\end{equation}
where for $i=1,\dots,N$ the initial conditions $X^0_i \in \R^d$, $M^0_i \in \R_{+}$ are given. Before we establish the result we introduce suitable for our setting metric space $(\overline{\mathcal{M}},d)$:  \begin{align*}
\overline{\mathcal{M}}:=\{&\overline{\mu}:[0,1]\rightarrow \mathcal{M}(\R^d)\},
\m{ where } \overline{\mu} \m{ denotes the measurable function}  \\
&\overline{\mu}:\xi\mapsto \mu^{\xi} \in \mathcal{M}(\R^d) \m{ such that } \izj\ird d\mu^{\xi}(x)d\xi < \infty \}
\end{align*}
and
\[
d(\overline{\mu},\overline{\nu}) = \izj d_F(\mu^{\xi},\nu^{\xi})d\xi.
\]
Furthermore, we define for every $\al \geq 0$  metric space $(\mathcal{M}_{t_{*}},d_{\alpha})$, where
\[
\mathcal{M}_{t_{*}}:=C([0,t_{*}];\overline{\mathcal{M}}) \hd \m{ and } \hd d_\al(\overline{\mu}_{.},\overline{\nu}_{.}) = \sup_{t \in [0,t_{*}]}e^{-\al t}d(\overline{\mu}_{t},\overline{\nu}_{t}).
\]
\begin{theo}\label{maineasy}
Assume that there exists nonnegative and symmetric function $w:[0,1]^2 \rightarrow [0,1]$, such that 
\[
w^{N}_{ij}:=N^2\int_{[\frac{i-1}{N}, \frac{i}{N})}\int_{[\frac{j-1}{N}, \frac{j}{N})} w(\xi,\eta)d\xi d\eta \hd \m{ for } \hd i,j=1,\dots, N
\]
and
\eqq{\sup_{(\xi,\eta) \in [0,1]^2} |w(\xi,\eta)| \leq 1.} {as1}
Furthermore, we assume that there exists $M>0$ such that
\eqq{
\sup_{N\in \mathbb{N}}\max_{1\leq i \leq N} M^0_i \leq M.
}{Mz}
Let $(X_i,M_i)_{i=1}^N$ be a solution to (\ref{part1V}) with $w^N_{ij}$ as above and the kernels $K_1,K_2$ satisfying the assumption~(\ref{asK}).
Let $\mu_t^{\xi}$ be a unique measure solution to 
\eqq{
\partial_t \mu_t^\xi(x) + \divv \left( \mu_t^\xi(x) \izj \ird K_{1}(x-y)d\mu^\eta_t(y) w(\xi,\eta)d\eta\right) =  \mu_t^\xi(x) \left(A - \izj \ird K_{2}(x-y)d\mu_t^\eta(y) w(\xi,\eta)d\eta\right),
}{meanaV}
with the initial condition $\mub_0 = \rho_0^\xi \mathcal{L}^d \in \mb$ satisfying $\sup_{\xi \in (0,1)} \ird d\mu^\xi_0(x)<\infty$. Denote $\mathbb{I}_{i}^n := \mathbb{I}_{[\frac{i-1}{n},\frac{i}{n})}$ and 
define 
the weighted empirical measure  by the formula
\eqq{
\mu_{n,m,t}^{\xi} = \frac{1}{m}\sum_{i=1}^n\sum_{j=1}^m M_{(i-1)m+j}(t)\delta_{X_{(i-1)m+j}(t)}\mathbb{I}_{i}^n(\xi) \hd  n,m>0, \hd N=nm.
}{empfin}
If the initial conditions for the system (\ref{part1V}) are well approximated by a $\mub_0 \in \overline{\mathcal{M}}$, i.e. for any $n\in \mathbb{N}$
\[
\lim_{m\rightarrow \infty}d(\overline{\mu}_{0},\overline{\mu}_{n,m,0}) = 0, \hd  \m{ where } \mu_{n,m,0}^{\xi} = \frac{1}{m}\sum_{i=1}^n\sum_{j=1}^m M_{(i-1)m+j}^0\delta_{X_{(i-1)m+j}^0}\mathbb{I}_{i}^n(\xi), 
\]
then for every $\ve > 0$ exists $\tilde{n} > 0$ dependent only on norms $K_1,K_2$ in $W^{1,\infty}$ and $\ve,A,t_{*}$, $\izj \ird d\mu^\xi_0(x)d\xi$, $\sup_{\xi \in (0,1)} \ird d\mu^\xi_0(x)$ and $m_1 > 0$ dependent additionally on $\tilde{n}$ such that for any $n \geq \tilde{n}$  and $m > m_1$
\[
\sup_{t\in (0,t_{*})}d(\mub_t,\mub_{n,m,t}) \leq \ve.
\]
\end{theo}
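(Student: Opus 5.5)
The plan is a triangle inequality through an auxiliary object. Let $w^n$ denote the piecewise constant discretization of $w$ on the $n\times n$ grid, $w^n(\xi,\eta) = n^2\int_{\ini}\int_{[\frac{j-1}{n},\frac{j}{n}]} w$ for $\xi\in\ini$, $\eta\in[\frac{j-1}{n},\frac{j}{n}]$. Let $\overline{\nu}_{n,m,t}$ be the solution of (\ref{meanaV}) with $w$ replaced by $w^n$ and initial datum $\overline{\mu}_{n,m,0}$. Then
\[
d(\mub_t,\mub_{n,m,t}) \leq d(\mub_t,\overline{\nu}^{(n)}_t) + d(\overline{\nu}^{(n)}_t,\overline{\nu}_{n,m,t}) + d(\overline{\nu}_{n,m,t},\mub_{n,m,t}),
\]
where $\overline{\nu}^{(n)}$ solves the $w^n$-equation with initial datum $\mub_0$. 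The three terms are handled as follows.

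\textbf{Kernel stability.} The first term is controlled by Proposition \ref{kernelstab}. The difference of velocity and reaction fields is bounded by $\|K_j\|_{W^{1,\infty}}\sup_\eta \|\mu^\eta_t\|\int_0^1\abs{\int_0^1 (w-w^n)(\xi,\eta)\,\phi(\eta)\,d\eta}d\xi$ for suitable bounded test functions $\phi$. Grönwall in $d_\alpha$ gives a bound vanishing as $n\to\infty$, since $w^n\to w$ in $L^1([0,1]^2)$ by Lebesgue differentiation. This term fixes $\tilde n$.

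Two a priori facts feed in here. The total mass grows at most like $e^{At}$, because $K_2\ge0$ and $w\ge0$ make the reaction term at most $A$. Hence $\sup_\xi\ird d\mu_t^\xi\le e^{At_*}\sup_\xi \ird d\mu^\xi_0$, uniformly in $n$. This is where the dependence on $\sup_\xi \ird d\mu_0^\xi$ enters.

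\textbf{Initial stability.} The second term is controlled by Proposition \ref{inistab}:
\[
d(\overline{\nu}^{(n)}_t,\overline{\nu}_{n,m,t}) \le C(t_*,A,K_1,K_2,\text{mass})\, d(\mub_0,\mub_{n,m,0}).
\]
By assumption this tends to $0$ as $m\to\infty$ for each fixed $n\ge\tilde n$, which fixes $m_1$. One needs a uniform mass bound for $\mub_{n,m,0}$. It follows from (\ref{Mz}), since each block has $m$ particles of mass $\le M$ weighted by $1/m$, giving $\sup_\xi \ird d\mu^\xi_{n,m,0}\le M$.

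\textbf{The empirical measure is an exact solution.} For the third term I would show it vanishes identically, i.e.\ $\mub_{n,m,t}=\overline{\nu}_{n,m,t}$. Take $\xi\in\ini$ and a particle $k=(i-1)m+j$. Because $w^N_{kl}$ is the average of $w$ over the $N$-grid cell and $w^n$ is constant on $n$-cells, the point is that
\[
\frac1N\sum_l M_l w^N_{kl} K_1(X_k-X_l) = \int_0^1\ird K_1(X_k-y)\,d\mu^\eta_{n,m,t}(y)\,w^n(\xi,\eta)\,d\eta
\]
once one replaces $w^N_{kl}$ by the coarser $n$-cell average. Strictly, this holds exactly only if the particle system is driven by $w^n$. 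Otherwise there is an error of order $\|w^N - w^n\|$, which is again controlled through the kernel-stability argument as $n\to\infty$ uniformly in $m$. Granting this identity, a direct computation with test functions $\phi(t,x)$ along the characteristics $X_k$, with weights $M_k$ obeying (ii), shows that $\mu^\xi_{n,m,t}$ is a measure solution of the $w^n$-equation. Uniqueness of measure solutions then identifies it with $\overline{\nu}_{n,m,t}$.

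\textbf{Main obstacle.} I expect this third step to be the crux. One must reconcile the particle discretization $w^N_{kl}$ with the block-constant $w^n$, and handle the discrepancy as a perturbation of the field estimated uniformly in $m$. I would first try to absorb it into Proposition \ref{kernelstab} by viewing the particle measure as a solution with kernel $w^N$ and comparing $w^N$ to $w^n$ in $L^1$. Using $\|w^N-w^n\|_{L^1}\le \|w^N-w\|_{L^1}+\|w-w^n\|_{L^1}$, both terms are small for large $n$, since $N=nm\ge n$ and $w^N\to w$ in $L^1$.
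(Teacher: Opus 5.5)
Your three-term decomposition is the same as the paper's. Your first two steps are Propositions \ref{pfirst} and \ref{auxemp}. In the paper, $\overline{\nu}_{n,m,t}$ is the empirical measure of the auxiliary system (\ref{partV}) driven by the block-constant $w^n$, and it is checked directly, via uniqueness of characteristics, that it satisfies (\ref{fixed}). Only (\ref{fixed}) is known to be uniquely solvable, so identifications should go through it rather than through ``uniqueness of measure solutions''.

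The gap is the third term. You propose to view $\mub_{n,m,t}$ as a solution with kernel $w^N$ and apply Proposition \ref{kernelstab}. This fails, because $\mub_{n,m,t}$ does not solve (\ref{fixed}) with kernel $w^N$, for two reasons. First, for $\xi$ in the $N$-cell of particle $k$, that equation pushes forward the whole block datum $\mu^\xi_{n,m,0}$, i.e.\ all $m$ particles of the block, along one field built from row $k$ of $w^N$. In reality, particle $k'$ of the same block moves along row $k'$. Second, $\mu^\eta_{n,m,t}$ is constant in $\eta$ on $n$-blocks, so this field only sees the block averages $\frac1m\sum_{l\in b}w^N_{kl}$, not the individual weights $w^N_{kl}$ of (\ref{part1V}). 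So the discrepancy is not a kernel perturbation between two solutions, and Proposition \ref{kernelstab} has nothing to act on. Asserting that the error is ``of order $\|w^N-w^n\|$'' is exactly what needs proof.

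The paper closes this step at the particle level (Proposition \ref{third}). There $\overline{\nu}_{n,m,t}$ is the empirical measure of $(\tilde{X}_i,\tilde{M}_i)$, which solve (\ref{part1V}) with the same initial data and lifted weights $\tilde{w}^N_{ij}=w^n_{ab}$ ($i$ in block $a$, $j$ in block $b$). Lemma \ref{odestab} then gives $\frac1N\sum_i(|M_i-\tilde{M}_i|^2+|X_i-\tilde{X}_i|^2)\le C\|w^N-w^n\|^2_{L^2([0,1]^2)}$. Consequently $d(\mub_{n,m,t},\overline{\nu}_{n,m,t})\le Me^{At_*}(\frac1N\sum_i|X_i-\tilde{X}_i|^2)^{1/2}+(\frac1N\sum_i|M_i-\tilde{M}_i|^2)^{1/2}$, uniformly in $m$.

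Your PDE-level idea can also be rescued, but only by refining the labels. The measure $\sum_{k=1}^N M_k(t)\delta_{X_k(t)}\mathbb{I}^N_k(\xi)$ does satisfy (\ref{fixed}) with kernel $w^N$; this is the computation of Proposition \ref{auxemp} with $(n,m)=(N,1)$. So Proposition \ref{kernelstab} compares it with the $w^n$-solution from the same refined datum, whose $\sup_\xi$-mass is at most $M$. You then need three facts about averaging over $n$-blocks:
\begin{enumerate}
\item it is $1$-Lipschitz for $d$, by convexity of $d_F$;
\item it sends the refined particle measure to $\mub_{n,m,t}$;
\item it commutes with the $w^n$-dynamics, since $V_l[w^n,\cdot](t,x,\xi)$ depends only on the block of $\xi$ and on block averages, and for fixed fields (\ref{fixed}) is linear in the datum.
\end{enumerate}
The third fact implies that the averaged $w^n$-solution equals $\overline{\nu}_{n,m,t}$ by uniqueness. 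Neither this refinement nor the particle-level Lemma \ref{odestab} appears in your proposal, and without one of them the third term is not controlled.
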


Below we recall the definition of measure solution to (\ref{meanaV}) adopted in this paper.

\begin{defi}
A family of measures ${\mub_t}:[0,t_{*}]\rightarrow \mb$ is a measure
solution to (\ref{meanaV}) provided that $t\rightarrow \mu^\xi_t$ is narrowly continuous for almost all $\xi \in (0,1)$ and, for any test
function $\Psi \in  C^1_c([0, t_{*}] \times \R^d) \cap W^{1,\infty}([0, t_{*}] \times \R^d)$,  we have
\begin{align*}
 & \int_{0}^{t_{*}}\io \partial_t \Psi(t,x) d\mu^\xi_t(x)dt =\\  
 &-\int_{0}^{t_{*}}\io (\nabla \Psi(t,x) \cdot V_1[w,\mubd](t,x,\xi) +  \Psi(t,x)\cdot (A-V_2[w,\mubd](t,x,\xi)))d\mu^\xi_t(x)dt \m{ for } a.a. \hd \xi \in (0,1),
\end{align*}
where
\[
 V_i[w,\overline{\mu}_{.}](t,x,\xi) := \izj \ird K_{i}(x-y)d\mu_t^\eta(y) w(\xi,\eta)d\eta \hd \m{ for } \hd i = 1,2.
\]
\end{defi}

In the proof we follow the approach from \cite{KVM2018}. We also adopt the notation $\mu^\xi_t(x)$ and $d\mu^\xi_t(x)$ from  \cite{KVM2018} contrary to $f(t,x,\xi)$ and $f(t,dx,\xi)$ from the extended graphon setting in the rest of the paper.

\subsection{Solvability and stability of (\ref{meanaV})}
The purpose of this section is to prove the solvability and stability of (\ref{meanaV}) in $\mathcal{M}_{t_{*}}$, utilizing the method of characteristics. At first, we note that due to Lipschitz continuity of the kernels $K_1,K_2$ we have the following result.
\begin{lemma}\cite[Lemma 2.2 and Lemma 2.3]{KVM2018}\label{vreg}
For every $\overline{\mu}_{.} \in \mathcal{M}_{t_{*}}$ the functions
\[
 V_i[w,\overline{\mu}_{.}](t,x,\xi) = \izj \ird K_{i}(x-y)d\mu_t^\eta(y) w(\xi,\eta)d\eta \hd \m{ for } \hd i = 1,2
\]
are Lipschitz continuous in $x$ uniformly with respect to $\xi,t$, namely for every $x_0,x_1 \in \R^d$ 
\[
\abs{ V_i[w,\overline{\mu}_{.}](t,x_1,\xi) - V_i[w,\overline{\mu}_{.}](t,x_0,\xi) } \leq \norm{K_i}_{W^{1,\infty}(\R^d)} \izj \ird d\mu_t^\xi(x)d\xi |x_1-x_0|
\]
and continuous in $t$, i.e. for every $t,\tau \in (0,t_{*})$
\[
\abs{ V_i[w,\overline{\mu}_{.}](t,x,\xi) - V_i[w,\overline{\mu}_{.}](\tau,x,\xi) } \leq  \norm{K_i}_{L^{\infty}(\R^d)} d(\mub_t,\mub_\tau).
\]
Furthermore, for every $\overline{\mu}_{.}, \overline{\nu}_{.} \in \mathcal{M}_{t_{*}}$
\eqq{
\abs{V_i[w,\overline{\mu}_{.}](t,x,\xi)-V_i[w,\overline{\nu}_{.}](t,x,\xi)} \leq  \norm{K_i}_{L^{\infty}(\R^d)}  d(\overline{\mu}_t,\overline{\nu}_t).
}{stabv}
\end{lemma}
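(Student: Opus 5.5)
The three estimates follow directly from the definition of $V_i$, using $|w|\le 1$ from (\ref{as1}), the Lipschitz and boundedness properties of $K_i$ from (\ref{asK}), and the definition of the flat metric $d_F$. The first estimate is a direct integral bound. The second and third reduce to testing a difference of measures against a function that is admissible in the supremum defining $d_F$, after normalisation.

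\emph{Lipschitz continuity in $x$.} For fixed $t,\xi$ and $x_0,x_1\in\R^d$ I will write
\[
V_i[w,\mubd](t,x_1,\xi)-V_i[w,\mubd](t,x_0,\xi)=\izj\ird \bigl(K_i(x_1-y)-K_i(x_0-y)\bigr)d\mu^\eta_t(y)\,w(\xi,\eta)d\eta .
\]
The integrand is bounded by $\norm{\nabla K_i}_{L^\infty}|x_1-x_0|$, and $0\le w\le 1$. Since the $\mu^\eta_t$ are positive measures, integrating gives the bound
\[
\norm{K_i}_{W^{1,\infty}}\,|x_1-x_0|\izj\ird d\mu^\eta_t(y)\,d\eta .
\]
This is the claimed total mass of $\mub_t$, and it is finite because $\mub_t\in\mb$.

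\emph{Stability estimate (\ref{stabv}).} Fix $t$, $x$ and $\xi$, and set $\phi(y)=K_i(x-y)$ componentwise. Then
\[
\abs{V_i[w,\mubd]-V_i[w,\nubd]}\le \izj \Bigl|\ird \phi(y)\,d(\mu^\eta_t-\nu^\eta_t)(y)\Bigr|\,w(\xi,\eta)\,d\eta .
\]
The rescaled function $\phi/\norm{K_i}_{W^{1,\infty}}$ has sup norm and gradient norm at most one, so it is admissible in the definition of $d_F$. Hence the inner integral is at most $\norm{K_i}\,d_F(\mu^\eta_t,\nu^\eta_t)$. Using $w\le 1$ and integrating in $\eta$ gives $\norm{K_i}\,d(\mub_t,\nub_t)$, which is (\ref{stabv}).

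\emph{Continuity in $t$.} This is the special case of (\ref{stabv}) with $\nubd$ replaced by the same curve evaluated at time $\tau$. It gives the bound $\norm{K_i}\,d(\mub_t,\mub_\tau)$, and this tends to $0$ as $\tau\to t$ because $\mubd\in C([0,t_*];\mb)$.

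The only delicate point is the norm constant. $d_F$ requires both $\norm{\phi}_\infty\le1$ and $\norm{\nabla\phi}_\infty\le1$, so the natural constant is $\max(\norm{K_i}_{L^\infty},\norm{\nabla K_i}_{L^\infty})\le\norm{K_i}_{W^{1,\infty}}$, not $\norm{K_i}_{L^\infty}$ as printed in the statement. I will therefore read $\norm{K_i}_{L^\infty(\R^d)}$ in the statement as $\norm{K_i}_{W^{1,\infty}(\R^d)}$, which is harmless for all later uses. For vector-valued $K_1$ I will apply the argument to each component, which changes the bound by at most a factor $\sqrt d$.
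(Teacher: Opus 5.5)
Your proof is correct. It is the same direct estimate that underlies \cite[Lemmas 2.2 and 2.3]{KVM2018}; the paper only cites those lemmas and gives no proof of its own.

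Your correction of the constant is also right and in fact necessary. Take $d=1$, $w\equiv 1$, $\mu^\eta_t\equiv\delta_0$, $\nu^\eta_t\equiv\delta_h$ and $K_1(y)=\sin(Ly)$ with $L>1$. Then $\abs{V_1[w,\mubd](t,h/2,\xi)-V_1[w,\nubd](t,h/2,\xi)}=2\abs{\sin(Lh/2)}>h\geq \norm{K_1}_{L^{\infty}(\R)}\,d(\mub_t,\nub_t)$ for small $h>0$. So the printed $\norm{K_i}_{L^{\infty}(\R^d)}$ must indeed be replaced by $\norm{K_i}_{W^{1,\infty}(\R^d)}$. The $\sqrt d$ loss for $K_1$ can be avoided by testing with $y\mapsto e\cdot K_1(x-y)$ and taking the supremum over unit vectors $e$.
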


Lipschitz continuity of velocity allows us to obtain the existence of the solutions to our problem by the method of characteristics. Note that actually it is not necessary to assume any continuity assumption on $w$ (cf. \cite{KVM2018}).

\begin{coro}
Fix $\mubd \in \mt$ and discuss the problem 
\eqq{
\partial_t X^\xi(t,s,x) =  V_1[w,\overline{\mu}_{.}](t,X^{\xi}(t,s,x),\xi), \hd  X^\xi(s,s,x) = x \in \mathbb{R}^d,
}{char}
where $\xi \in (0,1)$ is a fixed parameter, $0\leq s \leq t\leq t_{*}$. From Lemma \ref{vreg}, function $V_1[w,\overline{\mu}_{.}]$ satisfies the assumptions of \cite[Lemma 8.1.4.]{AGS2005} and for every $x \in \mathbb{R}^d$ and almost all $\xi \in (0,1)$, there exists a unique solution $X^{\xi}(t,s,x)$ to (\ref{char}) which for fixed $s\in [0,t_{*})$ belongs to $L^{\infty}(s,t_{*};W^{1,\infty}(\mathbb{R}^d)) \cap W^{1,1}(s,t_{*};L^{\infty}(\mathbb{R}^d))$. 
\end{coro}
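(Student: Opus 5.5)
This is a standard Carathéodory ODE argument. We apply \cite[Lemma 8.1.4]{AGS2005} to the non-autonomous field $v(t,x):=V_1[w,\mubd](t,x,\xi)$ for fixed $\xi$. That lemma requires two things: the field is Borel, and for every compact $B\subset\R^d$
\[
\int_0^{t_*}\Bigl(\sup_{x\in B}\abs{v(t,x)}+\mathrm{Lip}(v(t,\cdot),B)\Bigr)dt<\infty .
\]
The plan is therefore to verify these hypotheses using Lemma \ref{vreg}, and then to read off the regularity of the flow.

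\textbf{Bounds on the field.} First, since $\mubd\in\mt=C([0,t_*];\mb)$ and $t\mapsto d(\mub_t,\mub_0)$ is continuous, the total masses are uniformly bounded: $S:=\sup_t\izj\ird d\mu^\eta_t d\eta<\infty$. With \eqref{as1}, this gives $\abs{v(t,x)}\le \norm{K_1}_{L^\infty}S$ for every $(t,x)$ and almost every $\xi$.

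Second, the first estimate in Lemma \ref{vreg} gives a global Lipschitz bound in $x$ with constant $\norm{K_1}_{W^{1,\infty}}S$, uniform in $t$ and $\xi$.

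Third, the continuity estimate in Lemma \ref{vreg} shows that $t\mapsto v(t,x)$ is continuous. Hence $v$ is continuous in $(t,x)$, and in particular Borel.

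\textbf{Measurability in $\xi$.} For fixed $(t,x)$, the map $\xi\mapsto v(t,x,\xi)$ is measurable by Fubini, since $w$ is measurable and $\overline{\mu}$ is measurable. This is the only point where "almost all $\xi$" enters: for $\xi$ outside a null set, $v(\cdot,\cdot,\xi)$ is well defined as the integral $\izj(\cdots)w(\xi,\eta)d\eta$ and the bounds above hold.

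\textbf{Existence and regularity of the flow.} For each such $\xi$ the integrability condition of \cite[Lemma 8.1.4]{AGS2005} is immediate, since both quantities in it are bounded by constants. The lemma then yields, for every $x$ and $s$, a unique global solution of \eqref{char} on $[s,t_*]$; it cannot blow up because $v$ is bounded.

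The regularity follows from three estimates:
\begin{itemize}
\item $\abs{X^\xi(t,s,x)-x}\le \norm{K_1}_{L^\infty}S\,(t-s)$, so $X^\xi(t,s,\cdot)-\mathrm{id}$ lies in $L^\infty$.
\item By Gronwall, $\abs{X^\xi(t,s,x)-X^\xi(t,s,y)}\le e^{\norm{K_1}_{W^{1,\infty}}S(t-s)}\abs{x-y}$.
\item $\partial_tX^\xi$ is bounded by $\norm{K_1}_{L^\infty}S$.
\end{itemize}
Together these give $X^\xi(\cdot,s,\cdot)\in L^\infty(s,t_*;W^{1,\infty})\cap W^{1,1}(s,t_*;L^\infty)$. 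Here $W^{1,\infty}$ is understood locally, or for the displacement $X-\mathrm{id}$, since the identity map is not bounded.

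The only delicate point is checking that $v$ satisfies the joint measurability and continuity hypotheses uniformly in $\xi$. This reduces to the uniform mass bound $S$, which I would derive first from the continuity of $\mubd$ in $\mb$.
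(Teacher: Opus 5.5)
Your proposal is correct and takes the same route as the paper, which simply invokes Lemma \ref{vreg} to verify the hypotheses of \cite[Lemma 8.1.4]{AGS2005}. You add the details the paper leaves implicit: the uniform mass bound $S$ (needed for the sup and Lipschitz bounds to be uniform in $t$), joint continuity and hence Borel measurability, and the Gronwall estimate for the spatial Lipschitz constant of the flow. Your caveat that $W^{1,\infty}$ must be read locally or for the displacement $X^\xi-\mathrm{id}$ is a fair correction to the literal statement.
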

\begin{prop}\label{solvability}
Let $X^{\xi}(t,\cdot):=X^{\xi}(t,0,\cdot)$ for almost all $\xi \in (0,1)$, denote the solution operator from (\ref{char}). For every $\overline{\mu}_0 \in \overline{\mathcal{M}}$, there exists $\al > 0$  dependent only on $A,t_{*}, \norm{K_1}_{W^{1,\infty}(\R^d)}, \norm{K_2}_{W^{1,\infty}(\R^d)}$ and $\izj d\mu_0^\xi d\xi$, such that the fixed point equation
\eqq{
\mu_t^\xi(x) = X^{\xi}(t,\cdot)_{\#}\left[\mu_0^\xi(x)\exp\left(At-\int_{0}^{t}V_2[w,\mubd](s,X^{\xi}(s,x),\xi)ds\right)\right]
}{fixed}
has a unique solution in $(Y_{t_{*}},d_\al)$, where 
\[
Y_{t_{*}}:=\left\{\mubd \in \mt: \sup_{t\in [0,t_{*}]}\izj\ird d\mu^\xi_t(x)d\xi \leq e^{At_{*}}\izj \ird \mu_0^{\xi}(x)d\xi\right\}.
\]
Furthermore, if $\mu_0^\xi(x) = \rho_0^{\xi}(x)d\mathcal{L}^d$, with $\rho^{\xi}_0 \in L^{1}(\R^d)$ for almost all $\xi \in (0,1)$, then the solution to (\ref{fixed}) is a measure solution of (\ref{meanaV}) with the initial condition $\mu_0^\xi(x)$. Moreover, $\mu_t^\xi(x) = \rho^{\xi}_t(x) d\mathcal{L}^d$, $\rho^{\xi}_t \in L^{1}(\R^d)$ for almost all $\xi$.
\end{prop}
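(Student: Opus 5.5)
We define an operator $\Gamma$ on $Y_{t_{*}}$ by the right-hand side of (\ref{fixed}) and show that it is a contraction in $(Y_{t_{*}},d_\al)$ for $\al$ large; we then check that the fixed point is a measure solution with absolutely continuous marginals. We work throughout with the constant $C_0:=e^{At_*}\izj\ird d\mu_0^\xi d\xi$.

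\emph{Self-map.} Fix $\mubd\in Y_{t_*}$. By Lemma \ref{vreg} and the Corollary, the flow $X^\xi(t,\cdot)$ is well defined and Lipschitz for a.a. $\xi$, with constant $e^{L t}$, where $L=\norm{K_1}_{W^{1,\infty}}C_0$. Since $K_2\ge 0$ and $w\ge0$, we have $V_2\ge 0$, so the weight $\exp(At-\int_0^tV_2\,ds)$ lies in $(0,e^{At}]$. Because a push-forward preserves total mass, $\Gamma(\mubd)$ satisfies the mass bound defining $Y_{t_*}$. Continuity $t\mapsto\Gamma(\mubd)_t$ in $d$ follows from three facts: $|\partial_t X^\xi|\le\norm{K_1}_\infty C_0$; the time derivative of the weight is bounded by $A+\norm{K_2}_\infty C_0$; and for any test function $\phi$ with $\norm{\phi}_\infty,\norm{\nabla\phi}_\infty\le1$ we have
\[
\abs{\ird \phi\, d(\Gamma_t-\Gamma_\tau)^\xi}\le C|t-\tau|\ird d\mu_0^\xi .
\]
Integrating this in $\xi$ shows $\Gamma(\mubd)\in\mt$. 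Measurability in $\xi$ is inherited from measurability of $V_i$ in $\xi$.

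\emph{Contraction; this is the main estimate.} Take $\mubd,\nubd\in Y_{t_*}$, and denote their flows by $X^\xi,Y^\xi$ and their weights by $E^\xi_\mu,E^\xi_\nu$. Gronwall together with (\ref{stabv}) gives
\[
|X^\xi(t,x)-Y^\xi(t,x)|\le \norm{K_1}_\infty\int_0^t e^{L(t-s)}d(\mub_s,\nub_s)\,ds .
\]
Likewise, using the Lipschitz property of $V_2$ in $x$ together with (\ref{stabv}),
\[
|E^\xi_\mu(t,x)-E^\xi_\nu(t,x)|\le e^{At}\int_0^t\big(\norm{K_2}_\infty d(\mub_s,\nub_s)+\norm{K_2}_{W^{1,\infty}}C_0|X^\xi(s,x)-Y^\xi(s,x)|\big)ds ,
\]
where we use $|e^a-e^b|\le e^{\max(a,b)}|a-b|$ and the fact that both exponents are at most $At$. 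For a test function $\phi$ we split
\[
\ird\phi\,d(\Gamma\mu-\Gamma\nu)^\xi_t=\ird\big[\phi(X^\xi)E_\mu-\phi(Y^\xi)E_\nu\big]d\mu_0^\xi
\]
into a spatial difference term and a weight difference term. Integrating over $\xi$ then yields
\[
d(\Gamma\mub_t,\Gamma\nub_t)\le C\int_0^t d(\mub_s,\nub_s)\,ds,
\]
where $C$ depends only on $A$, $t_*$, $\norm{K_i}_{W^{1,\infty}}$ and $\izj\ird d\mu_0^\xi d\xi$. Multiplying by $e^{-\al t}$ gives
\[
d_\al(\Gamma\mubd,\Gamma\nubd)\le \frac{C}{\al}\,d_\al(\mubd,\nubd),
\]
so choosing $\al=2C$ makes $\Gamma$ a contraction.

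The step I expect to be delicate is completeness of $(Y_{t_*},d_\al)$. The space $\mb$ of positive measures with the flat metric is not complete on all of $\R^d$ without tightness, since mass can escape to infinity. My first approach is to avoid this issue: run Picard iteration starting from $\mu^\xi_0$. All iterates are push-forwards of $\mu_0^\xi$ by uniformly controlled flows (displacement at most $\norm{K_1}_\infty C_0 t_*$) with uniformly bounded densities of weights, so they stay in a set that is tight for a.e. $\xi$ and uniformly bounded in mass. On that set the flat metric is complete, and the iterates converge; uniqueness follows from the contraction estimate above. If needed, completeness of $L^1$-Bochner-type spaces with values in the complete space of bounded positive measures under $d_F$ would be invoked.

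\emph{Measure solution and density.} Given the fixed point, we test (\ref{fixed}) against $\Psi(t,\cdot)$, for fixed $\xi$. Differentiating $t\mapsto\ird\Psi(t,X^\xi(t,x))E^\xi(t,x)\,d\mu_0^\xi(x)$ by the chain rule, using (\ref{char}) and $\partial_tE=E(A-V_2)$, and integrating over $(0,t_*)$ yields the weak formulation in the Definition; narrow continuity in $t$ also follows. If $\mu_0^\xi=\rho_0^\xi\mathcal L^d$, then since $X^\xi(t,\cdot)$ is a bi-Lipschitz homeomorphism (its inverse is the backward flow), the push-forward has density
\[
\rho_t^\xi(x)=\rho_0^\xi(X^{-1})\,E(t,X^{-1})\,|\det\nabla X^{-1}|\in L^1,
\]
where $X^{-1}=X^\xi(t,\cdot)^{-1}$ is evaluated at $x$; the Jacobian is bounded by Rademacher's theorem and the Lipschitz bound.
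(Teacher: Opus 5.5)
Your proposal is correct and follows the route the paper has in mind: the paper omits the proof and defers to the standard Banach fixed-point argument in $(Y_{t_*},d_\alpha)$ from the Kuramoto--graphon literature, adapted to the reaction weight, with the weak formulation obtained by the chain rule along characteristics and absolute continuity from a bi-Lipschitz push-forward. One correction to your aside: $(\mathcal{M}^+(\R^d),d_F)$ is in fact complete, because a $d_F$-Cauchy sequence of nonnegative measures is automatically uniformly tight (test against $\varepsilon\min\{1,\operatorname{dist}(\cdot,K)/\varepsilon\}$), so $\mb$ and hence $C([0,t_*];\mb)$ are complete by the usual Riesz--Fischer argument, $Y_{t_*}$ is closed in it, and your Picard/tightness detour is unnecessary.
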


The proof of Proposition \ref{solvability}, as well as two stability estimates below, is the adaptation of the reasoning from \cite{KVM2018} for the case with reaction term. We omit the proofs in these cases since they follow standard arguments.

\begin{prop}\label{inistab}
Let $\mubd,\nubd$ be two solutions to (\ref{fixed}) with initial conditions $\overline{\mu}_0, \overline{\nu}_0 \in \mb$, respectively. Then there exists a positive constant $C$ dependent only on $A,t_{*}$, $W^{1,\infty}$ norms of $K_1$ and $K_2$ and $\izj \ird\nu_0^\xi(x)d\xi$, $\izj \ird\mu_0^\xi(x)d\xi$ such that 
\[
\sup_{t \in [0,t_{*}]}d(\mub_t,\nub_t) \leq Cd(\overline{\mu}_0,\overline{\nu}_0).
\]
\end{prop}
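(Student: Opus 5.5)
Since both $\mubd$ and $\nubd$ solve the fixed point equation (\ref{fixed}), I will estimate $d(\mub_t,\nub_t)$ by splitting it through an intermediate measure. Denote by $X^\xi_\mu$, $X^\xi_\nu$ the characteristic flows (\ref{char}) generated by $V_1[w,\mubd]$ and $V_1[w,\nubd]$. Denote by $E^\xi_\mu(t,x)=\exp\bigl(At-\int_0^t V_2[w,\mubd](s,X^\xi_\mu(s,x),\xi)ds\bigr)$ and $E^\xi_\nu$ the corresponding weights. For fixed $\xi$ and a test function $\phi$ with $\norm{\phi}_{L^\infty}\le 1$, $\norm{\nabla\phi}_{L^\infty}\le1$, I write
\[
\ird \phi\, d(\mu^\xi_t-\nu^\xi_t)=\ird \phi(X^\xi_\mu(t,x))E^\xi_\mu(t,x)\,d(\mu^\xi_0-\nu^\xi_0)(x)+\ird\bigl[\phi(X^\xi_\mu)E^\xi_\mu-\phi(X^\xi_\nu)E^\xi_\nu\bigr](t,x)\,d\nu^\xi_0(x).
\]

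\textbf{First term.} The function $x\mapsto \phi(X^\xi_\mu(t,x))E^\xi_\mu(t,x)$ is bounded by $e^{At}$, because $K_2\ge0$ gives $V_2\ge0$. It is Lipschitz in $x$. By Lemma \ref{vreg} and Gronwall, $X^\xi_\mu(t,\cdot)$ has Lipschitz constant at most $e^{L t}$, where $L=\norm{K_1}_{W^{1,\infty}}e^{At_*}\izj\ird d\mu_0^\xi d\xi$. Here I use the mass bound $\sup_t\izj\ird d\mu^\xi_t d\xi\le e^{At_*}\izj\ird d\mu^\xi_0d\xi$ from $Y_{t_*}$. The same lemma controls the Lipschitz constant of $E^\xi_\mu$ through that of $V_2$ composed with the flow. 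Hence this term is bounded by $C\,d_F(\mu^\xi_0,\nu^\xi_0)$, and integrating in $\xi$ gives $C\,d(\mub_0,\nub_0)$.

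\textbf{Second term.} I bound it by $\ird \bigl(e^{At}|X^\xi_\mu-X^\xi_\nu|+|E^\xi_\mu-E^\xi_\nu|\bigr)d\nu^\xi_0$. For the flows, Lemma \ref{vreg} (the Lipschitz bound together with (\ref{stabv})) and Gronwall give
\[
|X^\xi_\mu(t,x)-X^\xi_\nu(t,x)|\le e^{Lt}\norm{K_1}_{L^\infty}\int_0^t d(\mub_s,\nub_s)ds .
\]
For the weights, $|e^a-e^b|\le e^{At}|a-b|$ whenever $a,b\le At$. Combined with (\ref{stabv}) for $K_2$ and the flow estimate, this gives $|E^\xi_\mu-E^\xi_\nu|\le C\int_0^t d(\mub_s,\nub_s)ds$. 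Integrating against $d\nu_0^\xi\,d\xi$ produces a factor $\izj\ird d\nu_0^\xi d\xi$. This is exactly where the dependence of $C$ on both initial total masses enters.

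\textbf{Conclusion.} Taking the supremum over $\phi$ and integrating in $\xi$ yields
\[
d(\mub_t,\nub_t)\le C_1 d(\mub_0,\nub_0)+C_2\int_0^t d(\mub_s,\nub_s)ds,
\]
and Gronwall gives the claim with $C=C_1e^{C_2t_*}$. The main obstacle is the first term: I must check that the Lipschitz constant of the weighted test function $\phi(X^\xi_\mu)E^\xi_\mu$ is controlled uniformly in $\xi$. Only the integrated mass $\izj\ird d\mu^\xi_t d\xi$ enters Lemma \ref{vreg}, and this works because $w\le1$. I also need a measurable choice in $\xi$ of near-optimal test functions, which is avoided by estimating pointwise in $\xi$ before integrating.
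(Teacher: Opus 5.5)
Your proof is correct and is essentially the standard Dobrushin-type argument of \cite{KVM2018} adapted to the reaction term. You split the difference through the initial-data term transported by the $\mu$-flow and weight, bound the flow and weight differences via Lemma~\ref{vreg} and (\ref{stabv}), use $V_2\geq 0$ to keep the weights below $e^{At}$, and close with Gr\"onwall; this is what the paper means when it omits this proof as a routine adaptation. One small point: (\ref{stabv}) actually requires $\norm{K_i}_{W^{1,\infty}(\R^d)}$ rather than $\norm{K_i}_{L^\infty(\R^d)}$, since $y\mapsto K_i(x-y)$ must be admissible in $d_F$; your constant already allows this dependence.
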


\begin{prop}\label{kernelstab}
Let  $\mubd,\nubd$ be two solutions to (\ref{fixed}) with different kernel functions $w_1,w_2$, both satisfying (\ref{as1}), i.e.
\[
\mu_t^\xi(x) = X^{\xi}[w_1,\mubd](t,\cdot)_{\#}\left[\mu_0(\xi,x)\exp\left(At-\int_{0}^{t}V_2[w_1,\overline{\mu}](s,X^{\xi}(s,x),\xi)ds\right)\right],
\]
\[
\nu_t^\xi(x) = X^{\xi}[w_2,\nubd](t,\cdot)_{\#}\left[\mu_0(\xi,x)\exp\left(At-\int_{0}^{t}V_2[w_2,\overline{\nu}](s,X^{\xi}(s,x),\xi)ds\right)\right].
\]
Furthermore, let 
\[
\sup_{\xi \in (0,1)} \ird d\mu^\xi_0 < \infty.
\]
Then, there exists positive constant $C$ dependent on $A,t_{*},c(\overline{\mu}_0), c(\overline{\nu}_0)$ and the norms of $K_1,K_2$ in $W^{1,\infty}$, such that
\[
\sup_{t\in [0,t_{*}]}d(\mub_t,\nub_t) \leq C \sup_{\xi \in (0,1)}\ird d\mu_0^\xi(x) \norm{w_1-w_2}_{L^{1}([0,1]^2)},
\]
where $c(\mub_0) := \izj \ird d\mu^\xi_0(x)dx, \hd c(\nub_0) := \izj \ird d\nu^\xi_0(x)dx$.
\end{prop}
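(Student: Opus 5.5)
We have to prove Proposition \ref{kernelstab}, the stability of solutions to (\ref{fixed}) with respect to the kernel $w$. We work with the characteristic representation and compare the two solutions through their characteristics and their exponential weights.

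\textbf{Set-up.} Write $X_1^\xi := X^\xi[w_1,\mubd]$ and $X_2^\xi := X^\xi[w_2,\nubd]$. Write $E_1^\xi(t,x)=\exp(At-\int_0^t V_2[w_1,\mubd](s,X_1^\xi(s,x),\xi)ds)$ and define $E_2^\xi$ analogously. Since $K_2\ge 0$ and $w_k\ge0$, both weights satisfy $0\le E_k\le e^{At_*}$. In the same way, Proposition \ref{solvability} gives the mass bound $\sup_t\izj\ird d\mu_t^\xi d\xi\le e^{At_*}c(\mub_0)$. We also get the pointwise-in-$\xi$ bound $\ird d\mu_t^\xi\le e^{At_*}\sup_\xi\ird d\mu_0^\xi$.

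For a test function $\phi$ with $\norm{\phi}_{L^\infty},\norm{\nabla\phi}_{L^\infty}\le1$ we then have
\[
\ird\phi\, d(\mu_t^\xi-\nu_t^\xi)=\ird\left[\phi(X_1^\xi(t,x))E_1^\xi(t,x)-\phi(X_2^\xi(t,x))E_2^\xi(t,x)\right]d\mu_0^\xi(x).
\]
This is bounded by
\[
\ird\left(e^{At_*}|X_1^\xi-X_2^\xi|+|E_1^\xi-E_2^\xi|\right)d\mu_0^\xi .
\]
Using $|e^a-e^b|\le e^{At_*}|a-b|$ for $a,b\le At_*$, everything reduces to controlling two quantities: $\delta X^\xi(t,x):=|X_1^\xi(t,x)-X_2^\xi(t,x)|$, and the time integral of the difference of the $V_2$ terms along the two characteristics.

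\textbf{Velocity splitting.} For $i=1,2$ we split
\[
V_i[w_1,\mubd](s,X_1)-V_i[w_2,\nubd](s,X_2)
\]
into three pieces.
\begin{enumerate}[(a)]
\item $V_i[w_1,\mubd](s,X_1)-V_i[w_1,\mubd](s,X_2)$. By Lemma \ref{vreg} this is bounded by $L\,\delta X$. The constant $L$ depends on $\norm{K_i}_{W^{1,\infty}}$ and the mass bound, hence on $A,t_*,c(\mub_0)$.
\item $V_i[w_1,\mubd]-V_i[w_1,\nubd]$. By (\ref{stabv}) this is bounded by $\norm{K_i}_{L^\infty}d(\mub_s,\nub_s)$.
\item $V_i[w_1,\nubd]-V_i[w_2,\nubd]$. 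This equals $\izj\ird K_i(x-y)d\nu_s^\eta(y)(w_1-w_2)(\xi,\eta)d\eta$, so it is bounded by
\[
\norm{K_i}_{L^\infty}\,e^{At_*}\sup_\eta\ird d\nu_0^\eta\int_0^1|w_1-w_2|(\xi,\eta)d\eta .
\]
\end{enumerate}
Piece (c) is where the $\sup_\xi$ mass assumption enters. Both solutions share the initial datum $\mu_0$, so $\sup_\eta\ird d\nu_0^\eta=\sup_\eta\ird d\mu_0^\eta$.

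\textbf{Gronwall argument.} Grönwall applied to $\delta X^\xi$ gives
\[
\delta X^\xi(t,x)\le C\int_0^t d(\mub_s,\nub_s)\,ds+C\,S\,\omega(\xi),
\]
where $S:=\sup_\eta\ird d\mu_0^\eta$ and $\omega(\xi)=\izj|w_1-w_2|(\xi,\eta)d\eta$. The exponential weights satisfy the same type of bound, using (a)–(c) for $i=2$. Inserting these into the estimate above and integrating in $\xi$ against $d\mu_0^\xi$ gives
\[
d(\mub_t,\nub_t)\le C\int_0^t d(\mub_s,\nub_s)\,ds+C\,S\izj\omega(\xi)\ird d\mu_0^\xi\,d\xi .
\]
The last term is at most $C S^2\norm{w_1-w_2}_{L^1}$. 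A final Grönwall step then closes the estimate.

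\textbf{Main obstacle.} The delicate point is the $\xi$-integration in the last term. Bounding $\ird d\mu_0^\xi$ by $S$ produces a factor $S^2$ rather than the single factor $S$ in the statement. To get the statement as written, we would avoid the extra factor by keeping the measure $\nu_s^\eta$ in piece (c) and using only the $L^1$-in-$\xi$ bound on $\mu_0$. Whether this works depends on how $c(\cdot)$ is allowed to enter the constant $C$. Failing that, the $S^2$ version is still a valid stability estimate, with the extra factor of $S$ absorbed into $C$.

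Two further technical points need care. First, we must justify measurability in $\xi$ of the quantities entering the argument. Second, we must check that the Grönwall constants are uniform in $\xi$; this follows because the bounds in Lemma \ref{vreg} are uniform in $\xi$.
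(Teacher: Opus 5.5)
\textbf{Where the argument stands.} The paper does not write out a proof of this proposition. It only says the proof is the standard adaptation, with a reaction term, of the Kuramoto-graphon argument: push-forward representation, the Lipschitz bounds of Lemma \ref{vreg}, and Gr\"onwall. Your scheme is exactly that argument, and every step you carry out is sound. There are two inherited details to fix. In piece (b), the flat-metric bound needs $\|K_i\|_{W^{1,\infty}}$ rather than $\|K_i\|_{L^\infty}$. Also, $0\le E_k\le e^{At_*}$ and the mass bounds use $w_k\ge 0$, which the paper assumes implicitly but which is not part of (\ref{as1}). What your argument proves is
\[
\sup_{t\in[0,t_*]} d(\mub_t,\nub_t)\le C\,S^2\,\|w_1-w_2\|_{L^1([0,1]^2)},\qquad S:=\sup_{\xi}\mu_0^\xi(\mathbb{R}^d),
\]
with $C=C(A,t_*,c(\mub_0),\|K_1\|_{W^{1,\infty}},\|K_2\|_{W^{1,\infty}})$. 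The step where you try to reduce $S^2$ to $S$ is a genuine gap.

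\textbf{Why the remedy fails.} After integrating in $\xi$, the critical term is, up to constants, $\int_0^1\int_0^1 m(\xi)m(\eta)|w_1-w_2|(\xi,\eta)\,d\eta\,d\xi$ with $m(\xi)=\mu_0^\xi(\mathbb{R}^d)$. Keeping $\nu_s^\eta$ and using only $\int_0^1 m=c(\mub_0)$ in $\xi$ produces $\sup_\xi\int_0^1|w_1-w_2|(\xi,\eta)\,d\eta$, not the $L^1([0,1]^2)$ norm. Absorbing $S$ into $C$ contradicts the dependence of $C$ stated in the proposition.

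\textbf{The statement as written is false.} No argument can close this gap. Take $K_1\equiv0$, and $K_2\ge0$ in $W^{1,\infty}$ with $K_2\equiv\kappa>0$ on $B_1$. Take a probability density $\rho$ supported in $B_{1/2}$, and set $\mu_0^\xi=S\,\mathbb{I}_{[0,a]}(\xi)\rho\,dx$ with $Sa=c$ fixed. Let $w_1\equiv0$ and $w_2=\mathbb{I}_{[0,a]^2}$; both are symmetric, nonnegative and bounded by $1$. Then $\mu_t^\xi=e^{At}\mu_0^\xi$. On the other side, $\nu_t^\xi=M_t\,\mathbb{I}_{[0,a]}(\xi)\rho\,dx$ with $\dot M=M(A-\kappa aM)$, $M_0=S$. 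Hence $y=aM$ solves $\dot y=y(A-\kappa y)$, $y(0)=c$, and $d(\mub_t,\nub_t)=ce^{At}-y(t)>0$ for $t>0$, independently of $S$. The right-hand side of the proposition is $C\,S\,a^2=Cc^2/S$, which tends to $0$ as $S\to\infty$ with $c(\mub_0)=c(\nub_0)=c$ fixed.

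\textbf{What to prove instead.} The correct statement is your $S^2$ version, which is sharp in this example. Alternatively, Cauchy--Schwarz together with $\int_0^1 m^2\le S\,c(\mub_0)$ gives the variant $C\,S\,c(\mub_0)\,\|w_1-w_2\|_{L^2([0,1]^2)}$. Either version suffices for Proposition \ref{pfirst}, where $\mu_0$ is fixed and $w^n\to w$ in $L^2$. So you should state and prove the corrected inequality rather than the literal one.
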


\subsection{Approximation}
In order to prove Theorem \ref{maineasy} we argue by triangle inequality. Theorem \ref{maineasy} follows from Proposition \ref{pfirst} together with Proposition \ref{auxemp} and Proposition \ref{third}. 
We begin with discretization of the kernel $w$  in the equation (\ref{meanaV}).
\begin{prop}\label{pfirst}
Let $\mu_t^{\xi}$ be a unique weak solution to (\ref{meanaV}) with initial condition $ \mub_0 = \rho_0^\xi \mathcal{L}^d \in \mb$ given by Proposition \ref{solvability}.
We define the discrete approximation of the kernel $w$  by
\[
w^{n}(\xi,\eta) = \sum_{i,j=1}^n w^{n}_{ij}\mathbb{I}_{i}^n(\xi)\mathbb{I}_j^n(\eta), \hd  \m{ where } \hd w^{n}_{ij} := n^2\int_{[\frac{i-1}{n}, \frac{i}{n})}\int_{[\frac{j-1}{n}, \frac{j}{n})} w(\xi,\eta)d\xi d\eta.
\]
Let us denote by $\{\overline{\nu}^n_t\}$ the sequence of solutions to (\ref{meanaV}) with $w=w^n$ and initial condition $\overline{\mu}_0$.

Fix $\ve > 0$. Then, the exists $n_1 > 0$, dependent only on $\ve, A,t_{*},c(\overline{\mu}_0), c(\overline{\nu}_0)$ and the norms of $K_1,K_2$ in $W^{1,\infty}$, such that for any $n > n_1$
\[
\sup_{t \in [0,t_{*}]} d(\mub_t,\nub^n_t) \leq \frac{\ve}{2}.
\]
\end{prop}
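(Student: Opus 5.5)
The plan is to combine the kernel stability estimate, Proposition \ref{kernelstab}, with the classical fact that the piecewise-constant block averages $w^n$ converge to $w$ in $L^1([0,1]^2)$. Both $\mubd$ and $\nubd^n$ are solutions of the fixed point equation (\ref{fixed}) with the same initial datum $\mub_0$. The first uses the kernel $w_1=w$, the second the kernel $w_2=w^n$. By Proposition \ref{solvability}, the measure solutions coincide with these fixed points.

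First I will check that $w^n$ satisfies (\ref{as1}). Each $w^n_{ij}$ is an average of $w$ over a square of area $n^{-2}$, so $0\le w^n\le 1$, and $w^n$ is still nonnegative and symmetric. Hence Proposition \ref{kernelstab} applies and yields
\[
\sup_{t\in[0,t_*]} d(\mub_t,\nub^n_t)\le C\,\sup_{\xi\in(0,1)}\ird d\mu_0^\xi(x)\,\norm{w-w^n}_{L^1([0,1]^2)}.
\]
Here $C$ depends only on $A$, $t_*$, $c(\mub_0)$, $c(\nub_0)$ and the $W^{1,\infty}$ norms of $K_1,K_2$. This constant is uniform in $n$, because the initial datum, and hence $c(\nub_0)=c(\mub_0)$, does not depend on $n$. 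The quantity $\sup_\xi \ird d\mu_0^\xi$ is finite by the hypotheses of Theorem \ref{maineasy}.

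Next I will show $\norm{w-w^n}_{L^1([0,1]^2)}\to 0$. The map $w\mapsto w^n$ is the conditional expectation of $w$ with respect to the $\sigma$-algebra generated by the dyadic-type squares $[\frac{i-1}{n},\frac in)\times[\frac{j-1}{n},\frac jn)$. It is therefore a contraction on $L^1$. For continuous $\tilde w$, uniform continuity gives $\norm{\tilde w-\tilde w^n}_{L^\infty}\to 0$. Approximating $w\in L^1$ by a continuous $\tilde w$ with $\norm{w-\tilde w}_{L^1}<\delta$ then gives
\[
\norm{w-w^n}_{L^1}\le 2\delta+\norm{\tilde w-\tilde w^n}_{L^1},
\]
which is less than $3\delta$ for $n$ large. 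Alternatively, one can cite the Lebesgue differentiation theorem.

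Finally, I choose $n_1$ so that for $n>n_1$ we have $C\sup_\xi\ird d\mu_0^\xi\,\norm{w-w^n}_{L^1}\le \ve/2$.

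The main obstacle is the claimed dependence of $n_1$. The $L^1$ convergence rate of $w^n\to w$ depends on $w$ itself, namely its $L^1$ modulus of continuity, and not only on the listed quantities. I will therefore make explicit that $n_1$ also depends on $w$ (which is fixed in Theorem \ref{maineasy}), and on $\sup_\xi\ird d\mu_0^\xi$ as in Proposition \ref{kernelstab}. The rest is a direct application of Proposition \ref{kernelstab}.
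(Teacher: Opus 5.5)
Your proposal is correct and follows the paper's proof: you apply Proposition \ref{kernelstab} with $w_1=w$, $w_2=w^n$ and use $w^n\to w$ in $L^1([0,1]^2)$. The paper obtains that convergence by citing a.e.\ and $L^2$ convergence from the literature, whereas you prove it directly via $L^1$-contractivity of block averaging and density of continuous functions; your remark that $n_1$ must also depend on $w$ itself (and on $\sup_{\xi}\ird d\mu_0^\xi$) is accurate, and the paper's list of dependencies omits it.
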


\begin{proof}
By \cite[Lemma 5.3]{KM2017} (see also \cite[Lemma 3.3]{KVM2018})
\eqq{w^n \rightarrow w \hd a.e \hd \m{ and in } L^{2}([0,1]^2) \hd \m{ as } \hd  n\rightarrow \infty
}{wconv}

Hence, the claim follows from Proposition \ref{kernelstab}.

\end{proof}
Henceforth, we operate with fixed $n>n_1$. Following \cite{KVM2018} we will construct the auxiliary empirical measure $\nub_{n,m,t}$ which for $m$ big enough is arbitrary close to $\nub^n$. Let us introduce arbitrary $m >0$ and $N=nm$. Let us introduce the system of $N$ pairs of equations describing $N$ particles whose connections are encoded in $w^n$ which is $n\times n$ matrix in the following way 
\begin{equation}\label{partV}
\begin{array}{ll}
\frac{d}{dt}\tx_{(k-1)m+l}(t) = \frac{1}{N}\sum_{i=1}^n\sum_{j=1}^m \tilde{M}_{(i-1)m+j}(t)w^{n}_{ki}K_{1}(\tx_{(k-1)m+l}(t) -\tx_{(i-1)m+j}(t) )\\
\frac{d}{dt}\tilde{M}_{(k-1)m+l}(t) =\tilde{M}_{(k-1)m+l}(t)\left[A- \frac{1}{N}\sum_{i=1}^n\sum_{j=1}^m \tilde{M}_{(i-1)m+j}(t)w^{n}_{ki}K_{2}(\tx_{(k-1)m+l}(t) -\tx_{(i-1)m+j}(t)\right]\\
\tx_{(k-1)m+l}(0) = X^0_{(k-1)m+l} \in \R^d, \hd \tilde{M}_{(k-1)m+l}(0) = M^0_{(k-1)m+l} \m{ for } k\in \{1,\dots,n\}, \hd l \in \{1,\dots,m\}.
\end{array}
\end{equation}

\begin{prop}\label{auxemp}
Let us define the empirical measure $\nub_{n,m,t} \in \mb$ by the following formula
\[
\nu_{n,m,t}^{\xi} = \frac{1}{m}\sum_{i=1}^n\sum_{j=1}^m \tilde{M}_{(i-1)m+j}(t)\delta_{\tx_{(i-1)m+j}(t)}\mathbb{I}_i^n(\xi),
\]
where $(\tilde{M}_{(i-1)m+j},\tx_{(i-1)m+j})$, $i \in \{1,\dots,n\}, j \in \{1,\dots,m\}$ is a solution to (\ref{partV}). Then, there exists a positive $C$ dependent only on $A,t_{*},M$, $W^{1,\infty}$-norms of $K_1$ and $K_2$ and $\izj \ird d\nu_0^\xi(x)d\xi$, $\izj \ird d\mu_0^\xi(x)d\xi$ such that 
\[
\sup_{t \in [0,t_{*}]}d(\nub_{t}^n,\nub_{n,m,t}) \leq C d(\mub_{0},\nub_{n,m,0}).
\]
\end{prop}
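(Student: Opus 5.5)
The plan is to show that $\nub_{n,m,\cdot}$ is itself a solution of the fixed point equation (\ref{fixed}) with kernel $w=w^n$ and initial datum $\nub_{n,m,0}$, and then to apply Proposition \ref{inistab}. Since $\nub^n$ solves (\ref{fixed}) with $w=w^n$ and initial datum $\mub_0$, the stability estimate would give
\[
\sup_{t}d(\nub^n_t,\nub_{n,m,t})\le C\, d(\mub_0,\nub_{n,m,0}),
\]
which is the claim.

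The first step is to compute the velocity field generated by the empirical measure. For $\xi\in[\frac{k-1}{n},\frac{k}{n})$ we have $w^n(\xi,\eta)=w^n_{ki}$ for $\eta\in \mathbb{I}_i^n$. Since $\nu^\eta_{n,m,t}=\frac1m\sum_j \tilde M_{(i-1)m+j}\delta_{\tx_{(i-1)m+j}}$ on that interval and each interval has length $\frac1n$, we get
\[
V_j[w^n,\nubd_{n,m}](t,x,\xi)=\frac{1}{nm}\sum_{i=1}^n\sum_{l=1}^m \tilde M_{(i-1)m+l}(t)\,w^n_{ki}K_j\bigl(x-\tx_{(i-1)m+l}(t)\bigr).
\]
Since $N=nm$, this is exactly the right-hand side of (\ref{partV}) evaluated at $x=\tx_{(k-1)m+l}$. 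So each particle trajectory $\tx_{(k-1)m+l}(t)$ is the characteristic $X^\xi(t,X^0_{(k-1)m+l})$ of (\ref{char}) with $\xi\in \mathbb{I}^n_k$. Moreover, integrating the linear ODE for the mass gives
\[
\tilde M_{(k-1)m+l}(t)=M^0_{(k-1)m+l}\exp\Bigl(At-\int_0^t V_2[w^n,\nubd_{n,m}](s,X^\xi(s,X^0_{(k-1)m+l}),\xi)\,ds\Bigr).
\]
Pushing forward the weighted Diracs then reproduces (\ref{fixed}) exactly.

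It remains to check that $\nubd_{n,m}$ lies in the class where uniqueness and stability hold. Continuity in $t$ follows from the Lipschitz continuity of the particle trajectories and masses. The mass bound $\sup_t\izj\ird d\nu^\xi_{n,m,t}d\xi\le e^{At_*}\izj\ird d\nu^\xi_{n,m,0}d\xi$ follows from $K_2\ge0$, $w^n\ge0$ and $\tilde M\ge0$, since these give $\partial_t\tilde M\le A\tilde M$.

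Proposition \ref{inistab} also needs $\nub_{n,m,0}$ to be admissible even though it is not absolutely continuous. This is the main obstacle: Proposition \ref{inistab} is stated for general initial data in $\mb$, but I must make sure its constant depends only on the total masses. The mass of $\nub_{n,m,0}$ is bounded by $M$ via (\ref{Mz}), which explains the dependence of $C$ on $M$. If needed, I would re-derive the stability bound directly along characteristics. Testing with $\phi$ in the flat-metric ball, I would split $d(\nub^n_t,\nub_{n,m,t})$ into two parts: the flow difference, controlled by Lemma \ref{vreg} and Gronwall, and the exponential-weight difference, controlled by (\ref{stabv}) and the boundedness of $\exp(At)$. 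This yields an estimate $d(\cdot_t,\cdot_t)\le C d(\cdot_0,\cdot_0)+C\int_0^t d(\cdot_s,\cdot_s)\,ds$, and Gronwall's inequality closes it.
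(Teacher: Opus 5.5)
Your proposal is correct and follows essentially the same route as the paper: show that the weighted empirical measure solves the fixed-point equation (\ref{fixed}) with $w=w^n$ and initial datum $\nub_{n,m,0}$, identifying particles with characteristics via uniqueness for (\ref{char}) and masses with the exponential weight, and then invoke Proposition \ref{inistab} using the mass bound from (\ref{Mz}). Your extra checks (time continuity and the mass bound placing the empirical measure in $Y_{t_*}$) are details the paper leaves implicit.
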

\begin{proof}
Let us show that the empirical measure satisfies equation (\ref{fixed}) with $w^n$ defined in Proposition~\ref{pfirst} and  $\mub_0 = \nub_{n,m,0}$. For $l=1,2$ we have
\begin{align*}
V_l[w^n,\nub_{n,m,.}](t,x,\xi) &= \izj \ird K_{l}(x-y)d\nu^\eta_{n,m,t}(y) w^n(\xi,\eta)d\eta\\
 &= \sum_{i=1}^n\int_{\ini}w^{n}(\xi,\eta)d\eta \frac{1}{m}\sum_{j=1}^m \tilde{M}_{(i-1)m+j}(t)K_{l}(x - \tx_{(i-1)m+j}(t))\\
 & = \frac{1}{N} \sum_{k=1}^n \mathbb{I}^n_k(\xi)\sum_{i=1}^n \sum_{j=1}^m  w^{n}_{ki} \tilde{M}_{(i-1)m+j}(t)K_{l}(x - \tx_{(i-1)m+j}(t)).
\end{align*}
Thus,
\eqq{
V_l[w^n\nub_{n,m,.}](t,\tx_{(k-1)m+l}(t),\xi)= \frac{1}{N} \sum_{k=1}^n \mathbb{I}^n_k(\xi)\sum_{i=1}^n \sum_{j=1}^m  w^{n}_{ki} \tilde{M}_{(i-1)m+j}(t)K_{l}(\tx_{(k-1)m+l}(t) - \tx_{(i-1)m+j}(t))
}{v2x}
and 
\[
\frac{d}{dt}\tx_{(k-1)m+l}(t) =V_1[w^n,\nub_{n,m,.}](t,\tx_{0,(k-1)m+l},\xi) \hd \m{ for } \hd \xi \in [\frac{k-1}{n},\frac{k}{n}).
\]
From the uniqueness of solutions to (\ref{char}), we obtain  $X^\xi[w^n,\nub_{n,m,.}](t,\tx_{0,(k-1)m+l})=\tx_{(k-1)m+l}(t)$ for $\xi \in [\frac{k-1}{n},\frac{k}{n})$.
For $\Phi \in C_c(\mathbb{R}^d)$ and $\xi \in [\frac{k-1}{n},\frac{k}{n})$ we may write
\begin{align*}
&\ird \Phi(x)dX^{\xi}[w^n,\nub_{n,m,.}](t,\cdot)_{\#}\left[\nu^\xi_{0,n,m}\exp\left(At-\int_{0}^{t}V_2[w^n,\overline{\nu}_{n,m,.}](s,X^{\xi}[w^n,\nub_{n,m,.}](s,\cdot),\xi)ds\right)\right](x)\\
&= \ird \Phi(X^{\xi}[w^n,\nub_{n,m,.}](t,x))\exp\left(At-\int_{0}^{t}V_2[w^n,\overline{\nu}_{n,m,.}](s,X^{\xi}[w^n,\nub_{n,m,.}](s,x),\xi)ds\right)d\nu^\xi_{n,m,0}(x)
\end{align*}
\begin{align*}
&=\frac{1}{m}\sum_{i=1}^n\sum_{j=1}^m M_{(i-1)m+j}^0 \Phi(X^{\xi}[w^n,\nub_{n,m,.}](t,X^0_{(i-1)m+j})) \times
\\
&\times \exp\left(At-\int_{0}^{t}V_2[w^n,\overline{\nu}_{n,m,.}](s,X^{\xi}[w^n,\nub_{n,m,.}](s,X^0_{(i-1)m+j})),\xi)ds\right)
\\
&=\frac{1}{m}\sum_{i=1}^n\sum_{j=1}^m M_{(i-1)m+j}^0 \Phi(\tx_{(i-1)m+j}(t))\exp\left(At-\int_{0}^{t}V_2[w^n,\overline{\nu}_{n,m,.}](s,\tx_{(i-1)m+j}(s)),\xi)ds\right).
\end{align*}
We note that by (\ref{v2x})
\[
\int_{0}^{t}V_2[W^n,\overline{\nu}_{n,m,.}](s,\tx_{(i-1)m+j}(s)),\xi)ds = \frac{1}{N} \sum_{a=1}^n \sum_{b=1}^m  w^{n}_{ia} \int_0^t \tilde{M}_{(a-1)m+b}(s)K_{2}(  \tx_{(i-1)m+j}(s)-\tx_{(a-1)m+b}(s))ds.
\]
Hence, from (\ref{partV})
\[
M^0_{(i-1)m+j}\exp\left(At-\int_{0}^{t}V_2[w^n,\overline{\nu}_{n,m,.}](s,\tx_{(i-1)m+j}(s)),\xi)ds\right) = \tilde{M}_{(i-1)m+j}(t),
\]
which leads to
\[
\ird \Phi(x)dX^{\xi}[w^n,\nub_{n,m,.}](t,\cdot)_{\#}\left[\nu^\xi_{0,n,m}\exp\left(At-\int_{0}^{t}V_2[w^n,\overline{\nu}_{n,m,.}](s,X^{\xi}[w^n,\nub_{n,m,.}](s,\cdot),\xi)ds\right)\right](x)
\]
\[
=\frac{1}{m}\sum_{i=1}^n\sum_{j=1}^m\tilde{M}_{(i-1)m+j}(t) \Phi(\tx_{(i-1)m+j}(t)) = \ird \Phi(x)d\nu^\xi_{n,m,t}(x)
\]
for $\xi \in \xi \in [\frac{k-1}{n},\frac{k}{n})$, $k=1,\dots,n$. Thus the empirical measure indeed satisfies (\ref{fixed}) with $w=w^n$ and $\nub_0 = \nub_{n,m,0}$. The claim follows from Proposition \ref{inistab} and the assumption (\ref{Mz}).
\end{proof}

It remains to compare the weighted empirical measure defined in (\ref{empfin}) with auxiliary $\overline{\nu}_{n,m,\cdot}$.
To this end we firstly formulate the lemma whose proof we postpone to Appendix.

\begin{lemma}\label{odestab}
Assume (\ref{asK}) and (\ref{Mz}). Let $(\nx_i(t),M_i(t))_{i=1}^{N}$ and $(\tx_i(t),\tilde{M}_i(t))_{i=1}^{N}$ be two solutions to (\ref{part1V}) with the same initial conditions $X_i(0) = \tx_i(0) = \nx_{i}^0 \in \R^d, \hd M_i(0) = \tilde{M}_i(0) = M_{i}^0 \geq 0 \m{ for } i\in \{1,\dots,N\}$ and different nonnegative weights $(w^N_{ij})_{i,j=1}^N$ and $(\tilde{w}^N_{ij})_{i,j=1}^N$, respectively. Finally, let $\max_{1\leq i,j \leq N}w^N_{ij} \leq  C_w$ and $\max_{1\leq i,j \leq N}\tilde{w}^N_{ij} \leq  C_{\tilde{w}}$ for every $N\in \mathbb{N}$. Then there exists a positive $C$ dependent only on $A,t_{*},M,C_w,C_{\tilde{w}}$, $\norm{K_1}_{W^{1,\infty}(\R^d)}, \norm{K_2}_{W^{1,\infty}(\R^d)}$ such that
\[
\frac{1}{N}\sum_{i=1}^N \abs{M_i(t) - \tilde{M}_i(t)}^2 + \frac{1}{N}\sum_{i=1}^N \abs{\nx_i(t) - \tx_i(t)}^2 \leq C\frac{1}{N^2}\sum_{i,j=1}^N \abs{w^N_{ij}-\tilde{w}^N_{ij}}^2.
\]
\end{lemma}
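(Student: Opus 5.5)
We argue by a Gronwall estimate on the quantity
\[
E(t):=\frac{1}{N}\sum_{i=1}^N \abs{M_i(t)-\tilde M_i(t)}^2+\frac{1}{N}\sum_{i=1}^N\abs{X_i(t)-\tx_i(t)}^2 ,
\]
which vanishes at $t=0$ because the initial data coincide.

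\textbf{Step 1: a priori bounds on the masses.} Since $M_i^0\ge 0$, $K_2\ge 0$, $w^N_{ij}\ge0$ and $\partial_t M_i=M_i[\dots]$, we have $M_i(t)=M_i^0\exp(\int_0^t[\dots])$. Hence $M_i\ge 0$, and since the bracket is at most $A$, also $M_i(t)\le Me^{At}\le Me^{At_*}=:\bar M$. The same bounds hold for $\tilde M_i$. This uniform bound is what makes all the nonlinear products Lipschitz.

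\textbf{Step 2: position differences.} We compute
\[
\partial_t(X_i-\tx_i)=\frac1N\sum_j\Big[M_jw_{ij}K_1(X_i-X_j)-\tilde M_j\tilde w_{ij}K_1(\tx_i-\tx_j)\Big]
\]
and split the bracket telescopically as
\[
(M_j-\tilde M_j)w_{ij}K_1(X_i-X_j)+\tilde M_j(w_{ij}-\tilde w_{ij})K_1(X_i-X_j)+\tilde M_j\tilde w_{ij}\big[K_1(X_i-X_j)-K_1(\tx_i-\tx_j)\big].
\]
Using $w_{ij}\le C_w$, $\tilde w_{ij}\le C_{\tilde w}$, $\tilde M_j\le\bar M$ and the Lipschitz bound on $K_1$, we get
\[
\abs{\partial_t(X_i-\tx_i)}\le C\Big(\tfrac1N\sum_j\abs{M_j-\tilde M_j}+\tfrac1N\sum_j\abs{w_{ij}-\tilde w_{ij}}+\abs{X_i-\tx_i}+\tfrac1N\sum_j\abs{X_j-\tx_j}\Big).
\]
We multiply by $\abs{X_i-\tx_i}$, use Young's inequality, and apply Cauchy--Schwarz in the forms $(\frac1N\sum_j a_j)^2\le \frac1N\sum_j a_j^2$ and $(\frac1N\sum_j\abs{w_{ij}-\tilde w_{ij}})^2\le\frac1N\sum_j\abs{w_{ij}-\tilde w_{ij}}^2$. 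Averaging over $i$ then gives
\[
\tfrac{d}{dt}\tfrac1N\sum_i\abs{X_i-\tx_i}^2\le C E(t)+C\tfrac1{N^2}\sum_{i,j}\abs{w_{ij}-\tilde w_{ij}}^2 .
\]

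\textbf{Step 3: mass differences.} We write $M_iB_i-\tilde M_i\tilde B_i=(M_i-\tilde M_i)B_i+\tilde M_i(B_i-\tilde B_i)$, where $B_i=A-\frac1N\sum_jM_jw_{ij}K_2(X_i-X_j)$. By Step 1, $\abs{B_i}\le A+\bar MC_w\norm{K_2}_{L^\infty}$. The difference $B_i-\tilde B_i$ is split exactly as in Step 2, with $K_2$ in place of $K_1$, and is bounded the same way; the factor $\tilde M_i\le \bar M$ is harmless. This yields the same differential inequality for $\frac1N\sum_i\abs{M_i-\tilde M_i}^2$.

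\textbf{Step 4: conclusion.} Adding the two inequalities gives
\[
E'(t)\le CE(t)+C\frac1{N^2}\sum_{i,j}\abs{w_{ij}-\tilde w_{ij}}^2,\qquad E(0)=0 .
\]
Gronwall's inequality on $[0,t_*]$ then yields the claim, with $C$ depending only on $A,t_*,M,C_w,C_{\tilde w}$ and the $W^{1,\infty}$ norms of $K_1,K_2$.

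The only genuinely delicate point is Step 1. Without the sign conditions $K_2\ge0$ and $w\ge0$, the masses could grow in a way that depends on $N$, and the product terms would not be Lipschitz uniformly in $N$. Everything else is routine bookkeeping with Cauchy--Schwarz.
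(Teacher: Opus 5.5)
Your proof is correct and follows essentially the same route as the paper: a uniform bound $M_i,\tilde M_i\le Me^{At_*}$ (via the exponential formula and the sign conditions), then an $L^2$-type energy estimate for the averaged squared mass and position differences. That estimate telescopes the interaction terms, applies Young and Cauchy--Schwarz, and closes with Gr\"onwall from zero initial data. The only cosmetic difference is that the paper drops one cross term in the mass equation as nonpositive (using $K_2, w^N_{ij}, M_i\ge 0$), whereas you simply bound $|B_i|$; this works equally well.
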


\begin{prop}\label{third}
For fixed $\ve > 0$ there exists $n_2>0$ dependent only on $\ve, A,t_{*}, M$, $\norm{K_1}_{W^{1,\infty}(\R^d)}, \norm{K_2}_{W^{1,\infty}(\R^d)}$ such that for any $n > n_2$ and for fixed $m > 0$
\[
\sup_{t \in (0,t_{*})}d(\mub_{n,m,t}, \nub_{n,m,t}) \leq \ve/4.
\]
\end{prop}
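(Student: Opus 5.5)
Both $\mub_{n,m,t}$ and $\nub_{n,m,t}$ are weighted empirical measures over the same index set $N=nm$, with identical initial data $(X^0_i,M^0_i)$. They differ only in the interaction weights: $\mub_{n,m,t}$ comes from (\ref{part1V}) with $w^N_{ij}$, the $N$-level cell averages of $w$, while $\nub_{n,m,t}$ comes from (\ref{partV}). The latter is again (\ref{part1V}), now with $\tilde w^N_{(k-1)m+l,(i-1)m+j}:=w^n_{ki}$, the $n$-level cell averages. Both weight families are nonnegative and bounded by $1$, by (\ref{as1}). We are therefore exactly in the setting of Lemma \ref{odestab}, and the plan is to reduce the measure distance to the particle discrepancy controlled there.

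\emph{Step 1: from measures to particles.} Fix $\xi\in[\frac{k-1}{n},\frac kn)$ and a test function $\phi$ with $\norm{\phi}_{L^\infty}\le1$ and $\norm{\nabla\phi}_{L^\infty}\le 1$. Then
\[
\Big|\ird\phi\, d(\mu^\xi_{n,m,t}-\nu^\xi_{n,m,t})\Big|\le \frac1m\sum_{j=1}^m\Big(|M_{(k-1)m+j}(t)-\tilde M_{(k-1)m+j}(t)|+\tilde M_{(k-1)m+j}(t)\,|X_{(k-1)m+j}(t)-\tx_{(k-1)m+j}(t)|\Big).
\]
Integrating in $\xi$ gives a factor $\frac1n$, so $d(\mub_{n,m,t},\nub_{n,m,t})$ is bounded by $\frac1N\sum_{i=1}^N\big(|M_i-\tilde M_i|+\tilde M_i|X_i-\tx_i|\big)$. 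Since $K_2\ge0$ and the weights are nonnegative, $\tilde M_i(t)\le Me^{At_*}$. Cauchy--Schwarz then bounds this by $C\big(\frac1N\sum_i|M_i-\tilde M_i|^2+\frac1N\sum_i|X_i-\tx_i|^2\big)^{1/2}$.

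\emph{Step 2: apply Lemma \ref{odestab}.} This bounds the squared quantity by $C\frac1{N^2}\sum_{i,j}|w^N_{ij}-\tilde w^N_{ij}|^2$. Here $w^N_{ij}$ and $\tilde w^N_{ij}$ are the values on the $N$-grid cells of the step functions $w^N$ and $w^n$. Because the $N$-grid refines the $n$-grid, this sum equals $\norm{w^N-w^n}^2_{L^2([0,1]^2)}$. Hence
\[
\sup_t d(\mub_{n,m,t},\nub_{n,m,t})\le C\big(\norm{w^N-w}_{L^2}+\norm{w-w^n}_{L^2}\big),
\]
with $C$ depending only on $A,t_*,M$ and the $W^{1,\infty}$ norms of $K_1,K_2$, since $C_w=C_{\tilde w}=1$.

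\emph{Step 3: choose $n_2$.} By (\ref{wconv}), $\norm{w^n-w}_{L^2}\to0$; pick $n_2$ so that $C\norm{w^{n}-w}_{L^2}\le\ve/8$ for $n>n_2$. Since $N=nm\ge n>n_2$, the same bound holds for $w^N$, which gives $\ve/4$ for every $m$.

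The main obstacle is that $n_2$ then depends on the modulus of $L^2$ convergence of $w^n\to w$, i.e.\ on $w$ itself, which is not in the stated list of dependencies. I would handle this as Proposition \ref{pfirst} implicitly does, by absorbing the dependence on $w$ through (\ref{wconv}). One small point to check is that conditional expectations onto nested partitions are $L^2$ contractions, so the convergence $\norm{w^N-w}_{L^2}\to0$ holds along all $N\ge n$. Everything else is routine.
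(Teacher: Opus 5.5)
Your proposal is correct and follows the paper's proof step for step. Both arguments bound $d(\mub_{n,m,t},\nub_{n,m,t})$ by the particle discrepancies using $\tilde M_i\le Me^{At_*}$ and Cauchy--Schwarz, read (\ref{partV}) as (\ref{part1V}) with block weights $\tilde w^N_{ij}=w^n_{ab}$, and apply Lemma \ref{odestab}; they then identify the weight sum with $\norm{w^N-w^n}_{L^2([0,1]^2)}^2$ and conclude via the triangle inequality and (\ref{wconv}). Your remark that $n_2$ also depends on $w$, through the rate in (\ref{wconv}), is accurate; the paper has the same implicit dependence, and since (\ref{wconv}) holds along all integers, the bound for $w^N$ follows from $N\ge n>n_2$ alone, so your nested-partition remark is not needed.
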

\begin{proof}
At first note that under the assumptions (\ref{asK}) and (\ref{Mz}) there holds 
\[
\max_{1\leq i\leq N} M_i(t) \leq Me^{At_{*}}.
\]
We perform the following computations
\begin{align*}
&d(\nub_{n,m,t}, \mub_{n,m,t}) = \frac{1}{n}\sum_{i=1}^{n}d_F(\nu^{\xi}_{n,m,t}, \mu^{\xi}_{n,m,t})\mathbb{I}^n_i(\xi) = \frac{1}{n}\sum_{i=1}^{n}\mathbb{I}^n_i(\xi)\sup_{\norm{\Phi}_{BL} \leq 1}\abs{\ird\Phi(x)d(\nu^{\xi}_{n,m,t}(x)- \mu^{\xi}_{n,m,t}(x))}\\
&=\frac{1}{nm}\sum_{i=1}^{n}\mathbb{I}^n_i(\xi)\sup_{\norm{\Phi}_{BL} \leq 1}\abs{\sum_{j=1}^m \tilde{M}_{(i-1m+j)}(t)\Phi(\tx_{(i-1)m+j}(t)) - M_{(i-1m+j)}(t)\Phi(X_{(i-1)m+j}(t))}\\
&\leq \frac{1}{nm}\sum_{i=1}^{n}\mathbb{I}^n_i(\xi)\sum_{j=1}^m\abs{ \tilde{M}_{(i-1)m+j)}(t)}\abs{\tx_{(i-1)m+j}(t) - X_{(i-1)m+j}(t)} 
\\
&+  \frac{1}{nm}\sum_{i=1}^{n}\mathbb{I}^n_i(\xi)\sum_{j=1}^m\abs{ \tilde{M}_{(i-1)m+j}(t) - M_{(i-1)m+j}(t)}\leq Me^{At_{*}} \frac{1}{N}\sum_{i=1}^{N}\abs{\tx_{i}(t) - X_{i}(t)} +  \frac{1}{N}\sum_{i=1}^{N}\abs{ \tilde{M}_{i}(t) - M_{i}(t)}
\\
&\leq Me^{At_{*}}\left(\frac{1}{N}\sum_{i=1}^{N}\abs{\tx_{i}(t) - X_{i}(t)}^2\right)^{\frac{1}{2}} +  \left(\frac{1}{N}\sum_{i=1}^{N}\abs{ \tilde{M}_{i}(t) - M_{i}(t)}^2\right)^{\frac{1}{2}}.
\end{align*}
We note that in this notation $(\tx_i,\tilde{M}_i)$ satisfy the system (\ref{part1V}) with the kernel $\tilde{w}^N_{i,j} = w^n_{a,b}$ for \\$i \in \{(a-1)m+1,\dots,am\}$, $j \in \{(b-1)m+1,\dots,bm\}$, $a,b \in \{1,\dots,n\}$. Moreover,
\begin{align*}    
&\izj \izj \abs{w^N(\xi,\eta) - w^n(\xi,\eta)}^2 d\xi d\eta = \sum_{k=1}^n\sum_{l=1}^m\sum_{a=1}^n\sum_{b=1}^m \int_{\frac{(k-1)m+l-1}{N}}^{\frac{(k-1)m+l}{N}}\int_{\frac{(a-1)m+b-1}{N}}^{\frac{(a-1)m+b}{N}}\abs{w^{N}_{(k-1)m+l,(a-1)m+b} - w^n_{k,a}}^2 d\xi d\eta\\
&
=\frac{1}{N^2} \sum_{k=1}^n\sum_{l=1}^m\sum_{a=1}^n\sum_{b=1}^m \abs{w^{N}_{(k-1)m+l,(a-1)m+b} - w^n_{k,a}}^2 = \frac{1}{N^2}\sum_{i=1}^N\sum_{j=1}^N(w^N_{ij} - \tilde{w}^N_{ij})^2. 
\end{align*}
Recalling the definition of $w^n$  as well as the assumption (\ref{as1}), we note that $\max_{1\leq i,j \leq N}w^N_{ij} \leq  1$. Thus, applying Lemma \ref{odestab} we obtain
\[
d(\mub_{n,m,t}, \nub_{n,m,t}) \leq C(A,t_{*},M) \frac{1}{N}\left(\sum_{i,j=1}^N (w^N_{ij}-\tilde{w}^N_{ij})^2\right)^{\frac{1}{2}} \leq C(A,t_{*},M)\norm{w^N-w^n}_{L_{2}([0,1]^2)}
\]
\[
\leq C(A,t_{*},M)\left(\norm{w^N-w}_{L_{2}([0,1]^2)} +\norm{w^n-w}_{L_{2}([0,1]^2)}  \right).
\]
Recalling (\ref{wconv}) we arrive at the claim.
\end{proof}
Proposition \ref{third} together with Proposition \ref{pfirst} and Proposition \ref{auxemp} leads to the proof of Theorem \ref{maineasy}.

\section{Propagation of independence}
Let us come back to the proof of Theorem \ref{maintheorem}. In the first step, we introduce the auxiliary system (\ref{part2}) and we prove that the independent random variables $(\xb_i(t),\Mb_i(t))$ appropriately approximate $(X_i(t),M_i(t))$. Our arguments adapt the reasoning from \cite[Chapter 3]{JPS2025}, which follows the standard path from the mean-field limit theory. However, in our non-conservative framework several extensions are required, notably the generalization of Glivenko-Cantelli lemma (Lemma \ref{GC} in Appendix). 

We introduce the following problem
\begin{equation}\label{part2}
\begin{array}{lll}
(i) & \partial_t \xb_i(t) = \sum_{j=1}^N w^N_{ij} \ird K_{1}(\xb_i(t)-y)\tilde{f}_j(t,dy) & t \in (0,t_*),\\
(ii) & \partial_t \Mb_i(t) = \Mb_i(t)\left[A-\sum_{j=1}^N w^N_{ij}\ird K_{2}(\xb_i(t)-y)\tilde{f}_j(t,dy)\right] & t \in  (0,t_*),\\
(iii) & \xb_i(0) = X_{i}^0, \hd \Mb_i(0) = M_{i}^0,
\end{array}
\end{equation}
for $i=1,\dots, N$, where 
\eqq{
\tilde{f}_i(t,x) = \xb_i(t,x)_{\#}[\Mb_i(t,x)d\mathbb{P}(x)].
}{defft}
Here, by $\xb_i(t,x)_{\#}$ we understand the push forward operator, i.e. for any measurable map $T:\mathcal{A}_1\rightarrow \mathcal{A}_2$ and a measure $\mu$ on $\mathcal{A}_1$ we define $(T_{\#}\mu)(C):=\mu(T^{-1}(C))$ for any measurable $C \subset \mathcal{A}_2$.

At first let us establish the  existence results for (\ref{part}) and  (\ref{part2}) together with the independence for (\ref{part2}).

\begin{lemma}\label{exi}
Assuming (\ref{asK}), (\ref{aswb}),(\ref{M}), nonnegativity of $w_{ij}^N$ and 
\[
 \max_{1\leq i\leq N}\mathbb{E}|X^0_i| < \infty,
\]
for every $N \in \mathbb{N}$ the problems (\ref{part}) and (\ref{part2}) admit the unique solutions $(X_i(t),M_{i}(t))_{i=1}^N \in C^{1}((0,t_*);L^{1}(\R^d,\p))$ and $(\xb_i(t),\Mb_{i}(t))_{i=1}^N \in C^{1}((0,t_*);L^{1}(\R^{d},\p))$, respectively. 
Moreover, for each $t\in (0,t_{*})$ and $i \in \{1,\dots,N\}$ the random variables $\Mb_i(t), M_i(t)$ are nonnegative $\p$ a.e.  and
\[
\sup_{N\in \mathbb{N}}\max_{1\leq i\leq N}\sup_{t\in (0,t_*)} \Mb_i(t) \leq Me^{At_*},\hd \sup_{N\in \mathbb{N}}\max_{1\leq i\leq N}\sup_{t\in (0,t_*)} M_i(t) \leq Me^{At_*}  \hd \p \hd  a.e. \hd 
\]
Finally, if $(X^0_i,M^0_i)$ and $(X^0_j,M^0_j)$ are independent  for every $i\neq j$, then also $(\xb_i(t),\Mb_i(t))$ and $(\xb_j(t),\Mb_j(t))$ are independent for each $t \in (0,t_*)$ and $i\neq j$.
\end{lemma}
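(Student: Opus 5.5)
For the particle system (\ref{part}) we argue pathwise. Fix $x\in\R^d$ outside a $\p$-null set where (\ref{M}) fails; then (\ref{part}) is a finite-dimensional ODE in $(X_i,M_i)_{i=1}^N$. Its right-hand side is locally Lipschitz, because $K_1,K_2\in W^{1,\infty}$ and the products $M_jK_l$ are Lipschitz on bounded sets of the $M$-variables. This gives a unique local solution.

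To make it global on $(0,t_*)$ we need a priori bounds. Equation (\ref{part})(ii) has the form $\partial_t M_i = M_i\, a_i(t)$, so
\[
M_i(t)=M_i^0\exp\Bigl(\int_0^t a_i(s)\,ds\Bigr),\qquad a_i(s)=A-\sum_{j} M_j(s) w^N_{ij}K_2(X_i(s)-X_j(s)).
\]
This gives nonnegativity of $M_i(t)$ immediately. Since $K_2\ge0$, $w^N_{ij}\ge0$ and $M_j\ge0$, we have $a_i\le A$, hence $0\le M_i(t)\le M_i^0e^{At}\le Me^{At_*}$. We run this as a continuation argument on the maximal existence interval. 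Once the $M_i$ are bounded, $|\partial_tX_i|\le C_w Me^{At_*}\norm{K_1}_{L^\infty}$ by (\ref{aswb}), so $X_i$ cannot blow up and the solution extends to $[0,t_*]$.

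Measurability in $x$ follows from continuous dependence of ODE solutions on initial data. Since $|X_i(t)|\le |X_i^0|+Ct_*$ and $\mathbb{E}|X^0_i|<\infty$, we get $X_i\in C^1((0,t_*);L^1(\p))$, with the time derivative bounded uniformly. For $M_i$ the same holds trivially by boundedness.

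For (\ref{part2}) we set up a fixed point. Since $|\tilde f_j(t)|(\R^d)=\mathbb{E}\Mb_j(t)$, the natural space is $\mathcal{Z}=C([0,t_*];L^1(\p))^{2N}$ with weighted norm $\sup_t e^{-\alpha t}\sum_i\mathbb{E}(|\xb_i|+|\Mb_i|)$, restricted to tuples with $0\le\Mb_i\le Me^{At_*}$. Given such a candidate $(\xb_j,\Mb_j)$, we
\begin{enumerate}[(a)]
\item define $\tilde f_j$ by (\ref{defft});
\item build the fields $U_l^i(t,z)=\sum_j w^N_{ij}\mathbb{E}[\Mb_j(t)K_l(z-\xb_j(t))]$, which are bounded by $C_wMe^{At_*}\norm{K_l}_{L^\infty}$ and Lipschitz in $z$ uniformly in $t$;
\item solve, pointwise in $x$, the decoupled ODEs $\partial_t Y_i=U_1^i(t,Y_i)$ and $\partial_t m_i=m_i(A-U_2^i(t,Y_i))$ from $(X_i^0,M_i^0)$.
\end{enumerate}
The same exponential formula, now using $U_2^i\ge0$, shows the bound $0\le m_i\le Me^{At_*}$ is preserved.

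The contraction estimate is the main technical step. Given two inputs, we get
\[
|U_l^i(t,z)-\tilde U_l^i(t,z)|\le \norm{K_l}_{W^{1,\infty}}\sum_j w^N_{ij}\,\mathbb{E}\bigl(|\Mb_j-\tilde\Mb_j|+Me^{At_*}|\xb_j-\tilde\xb_j|\bigr).
\]
Summing over $i$ and using the column bound in (\ref{aswb}) controls this by $C_w$ times the input distance. A Grönwall estimate on the outputs, both in $Y_i$ and in $m_i$ (handling the exponential via $|e^a-e^b|\le e^{\max}|a-b|$ with exponents bounded above by $At_*$), gives Lipschitz dependence with constant $C/\alpha$ in the weighted norm. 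Choosing $\alpha$ large makes the map a contraction on the complete set $\mathcal{Z}$. Banach's theorem yields existence and uniqueness, and the regularity and bounds follow as in the first part.

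Independence is the easiest step but the conceptually important one. For the fixed point, the fields $U^i_l$ are \emph{deterministic} functions of $(t,z)$, since they depend on the inputs only through expectations. Hence $(\xb_i(t),\Mb_i(t))=\Psi_i(t,X_i^0,M_i^0)$ for a deterministic measurable flow map $\Psi_i$ of the ODE in step (c). Functions of independent random vectors are independent, which gives independence of $(\xb_i(t),\Mb_i(t))$ and $(\xb_j(t),\Mb_j(t))$ for $i\ne j$. The only care needed is measurability of $\Psi_i$, which follows from continuity of the flow in the initial data.
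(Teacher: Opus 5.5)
Your proof is correct. Its skeleton matches the paper's: a Banach fixed point in a time-weighted $L^1(\p)$ metric on a set with the built-in bound $0\le \Mb_i\le Me^{At_*}$, the exponential formula for the masses, and independence via a deterministic flow map $(\xb_i(t),\Mb_i(t))=\Psi_i(t,X^0_i,M^0_i)$.

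The routes differ in two places.

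\emph{System (\ref{part}).} You argue pathwise: a finite-dimensional locally Lipschitz ODE, with a priori bounds from the exponential formula and a continuation argument. The paper only asserts that its fixed-point argument for (\ref{part2}) carries over. Your version is more elementary and gives pathwise uniqueness without imposing the mass bound on the solution class.

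\emph{System (\ref{part2}).} Your map freezes the nonlocal fields $U^i_l$, which are deterministic functions of $(t,z)$ built from the input, and then solves the decoupled ODEs from $(X^0_i,M^0_i)$. The paper's operator $G$ is instead a Picard map: it substitutes the input $\xb_i$ directly into $\int_0^t\int K_l(\xb_i(\tau)-y)\tilde f_j(\tau,dy)\,d\tau$.
\begin{itemize}
\item The paper's map is explicit, so its contraction estimate is a direct computation. But it must afterwards rewrite the fixed point as an ODE with deterministic, Lipschitz coefficients to obtain the flow representation.
\item In your formulation every output is already a deterministic function of the initial data, so independence is immediate. The cost is an extra Gr\"onwall step on $Y_i-\tilde Y_i$ inside the contraction estimate. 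That step uses the uniform Lipschitz constant $C_wMe^{At_*}\norm{K_1}_{W^{1,\infty}(\R^d)}$ of the frozen field.
\item You also need continuity of $U^i_l$ in $t$ for the ODE in step (c). It follows from $\xb_j,\Mb_j\in C([0,t_*];L^1(\p))$ and the bound on $\Mb_j$.
\end{itemize}
Using the sum norm with the column bound in (\ref{aswb}), instead of the max norm with the row bound, is an inessential variation.
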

Since the proof of the lemma is rather standard we postpone it to the Appendix. 
Our goal in this section is to prove the following theorem.

\begin{theo}\label{mainprob}
Let us assume that the assumptions (\ref{asK}) -(\ref{Mb}) are satisfied, $(X^0_i,M^0_i)$ and $(X^0_j,M^0_j)$ be independent  for every $i\neq j$ and there exists $C_1 > 0$ such 
\eqq{
\sup_{N \in \mathbb{N}}\max_{1\leq i \leq N}\E|X_i^0|^2 \leq C_1.
}{ex}
Let $(X_i(t),M_i(t))_{i=1}^N$ and $(\xb_i(t),\Mb_i(t))_{i=1}^N$ be a unique solution to (\ref{part}) and (\ref{part2}) in $C^{1}((0,t_*);L^{1}(\R^{d},\p))$, respectively.  
Then, there exist  positive constants $C = C(C_1,t_*,C_w,M,A,\norm{K_1}_{L^{\infty} (\R^d)})$ and \\ $N_0 = N_0(d,m,t_*,C_w,M,\norm{K_2}_{L^{\infty} (\R^d)},A)$, such that  for any $N \geq N_0$ we have
\eqq{
\E d_{F}\left(\sn \Mb_{i}(t,x)\delta_{\xb_i(t,x)}(\cdot),\sn d\tilde{f}_i(t,\cdot)\right) \leq C N^{-\frac{1}{2+3d/2}}.
}{zGC}
Furthermore, there exists a positive constant $C$ dependent only on $M,t_*,A,C_w,\norm{K_{1}}_{W^{1,\infty}(\R^d)},\norm{K_{2}}_{W^{1,\infty}(\R^d)}$, such that
\eqq{
\E d_{F}\left(\sn \Mb_{i}(t,x)\delta_{\xb_i(t,x)}(\cdot),\sn M_i(t,x)\delta_{X_i(t,x)}(\cdot)\right)
\leq C \max_{1\leq i,j\leq N}\sqrt{w_{ij}^N}.
}{zEX}
Finally,
\eqq{
\E d_{F}\left(\sn M_{i}(t,x)\delta_{X_i(t,x)}(\cdot),\sn d\tilde{f}_i(t,\cdot)\right) \rightarrow 0 \m{ as } N\rightarrow \infty
}{finalp}
and for $i=1,\dots,N$ the measure $\tilde{f}_i$ defined in (\ref{defft}) is a measure solution to
\eqq{
\partial_t \tilde{f}_i(t,x) + \divv\left(\tilde{f}_i(t,x)\sum_{j=1}^Nw_{ij}^N\ird K_{1}(x-y)\tilde{f}_j(t,dy)\right) = \tilde{f}_i(t,x)\left[A- \sum_{j=1}^Nw_{ij}^N\ird K_{2}(x-y)\tilde{f}_j(t,dy)\right].
}{semic}
\end{theo}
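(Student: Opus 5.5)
Four parts: (\ref{zGC}) via a Glivenko--Cantelli type estimate for independent weighted atoms; (\ref{zEX}) via a second-moment coupling estimate on trajectories; (\ref{finalp}) by the triangle inequality; (\ref{semic}) by differentiating the push-forward (\ref{defft}) along the characteristics of (\ref{part2}).

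\textbf{The PDE (\ref{semic}).} This is the simplest part, so I do it first. Fix $\phi\in C^1_c$ and compute
\[
\frac{d}{dt}\ird \phi\, d\tilde f_i(t)=\frac{d}{dt}\E\big[\Mb_i(t)\phi(\xb_i(t))\big]
=\E\big[\Mb_i\nabla\phi(\xb_i)\cdot V^i_1(\xb_i)+\Mb_i\phi(\xb_i)(A-V^i_2(\xb_i))\big],
\]
where $V^i_l(t,x)=\sum_j w_{ij}^N\ird K_l(x-y)\tilde f_j(t,dy)$. Differentiation under $\E$ is justified by Lemma \ref{exi}: the bound $\Mb_i\le Me^{At_*}$ together with bounded kernels gives dominated convergence. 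By the definition of the push-forward, the right-hand side equals the weak form of (\ref{semic}). Narrow continuity in $t$ follows from the same bounds.

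\textbf{Estimate (\ref{zEX}).} Set $e(t)=\sn\E\big(|X_i-\xb_i|^2+|M_i-\Mb_i|^2\big)$. Bounding $d_F$ as in the proof of Proposition \ref{third} gives
\[
d_F\le Me^{At_*}\sn|X_i-\xb_i|+\sn|M_i-\Mb_i|,
\]
so it suffices to show $e(t)\le C\max_{ij}w^N_{ij}$ and then apply Cauchy--Schwarz.

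Subtract the two systems. In the velocity of particle $i$, split the difference into two pieces:
\begin{itemize}
\item[(a)] $\sum_j w_{ij}^N\big[M_jK_1(X_i-X_j)-\Mb_jK_1(\xb_i-\xb_j)\big]$. This is Lipschitz-controlled by $|X_i-\xb_i|+|X_j-\xb_j|+|M_j-\Mb_j|$, weighted by $w_{ij}^N$. Using (\ref{aswb}) on both row and column sums, after Cauchy--Schwarz and summation over $i$ it is bounded by $Ce(t)$.
\item[(b)] The fluctuation $\sum_j w_{ij}^N\big[\Mb_jK_1(\xb_i-\xb_j)-\ird K_1(\xb_i-y)\tilde f_j(dy)\big]$. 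Condition on $\xb_i$. By the independence in Lemma \ref{exi}, the summands with $j\neq i$ are centered and independent (note $\E[\Mb_jK_1(z-\xb_j)]=\ird K_1(z-y)\,d\tilde f_j$). Hence the second moment equals
\[
\sum_j (w_{ij}^N)^2\,\mathrm{Var}\le C\max_j w_{ij}^N\sum_j w_{ij}^N\le C C_w\max w.
\]
The diagonal term $j=i$ is bounded by $Cw_{ii}$.
\end{itemize}
The $M$-equation is handled the same way, using $M_i,\Mb_i\le Me^{At_*}$ to linearize the product structure. Gronwall then gives $e(t)\le C\max w_{ij}^N$.

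\textbf{Estimate (\ref{zGC}).} This is the main obstacle. I will invoke the generalized Glivenko--Cantelli Lemma \ref{GC}, whose proof I plan as follows. The measures $\sn\Mb_i\delta_{\xb_i}$ are sums of independent but non-identically distributed random measures with weights bounded by $Me^{At_*}$; their expectation is $\sn\tilde f_i$.
\begin{itemize}
\item \emph{Truncation.} Truncate to a ball $B_R$. The tail contribution is controlled by the second moments: the bound (\ref{ex}) propagates in time because $K_1$ is bounded, so $\E|\xb_i(t)|^2\le 2C_1+Ct_*^2$.
\item \emph{Partition.} Cover $B_R$ by cubes of side $\delta$, approximate each test $\phi$ with $\|\phi\|_{W^{1,\infty}}\le1$ by a piecewise constant function on the cubes (error of order $\delta$ times the mass), and bound each cube's fluctuation by its variance, of order $N^{-1/2}$.
\item \emph{Optimization.} This gives an error of order $R^2$-tail $+\,\delta+(R/\delta)^{d}N^{-1/2}$ up to powers. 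Optimizing $R,\delta$ yields the rate $N^{-1/(2+3d/2)}$.
\end{itemize}
The role of the lower bound $m$ in (\ref{Mb}) and of $N_0$ is to normalize the total mass: comparing $\sn\Mb_i$ with its expectation needs $N$ large enough that the mass concentrates, via Chebyshev and the condition $\E\Mb_i\ge me^{-Ct_*}$. I expect the bookkeeping in this lemma, with non-probability masses and the normalization, to be the hardest step.

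\textbf{Estimate (\ref{finalp}).} This follows from (\ref{zGC}), (\ref{zEX}), the triangle inequality and (\ref{aswl}).
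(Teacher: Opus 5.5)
Your proofs of (\ref{semic}) and (\ref{finalp}) are the paper's. For (\ref{zEX}) the mechanism is also the paper's: Lipschitz terms, the conditional-independence cancellation of cross terms in the fluctuation, then Gr\"onwall. Only the error functional differs. The paper's Lemma \ref{exest} runs Gr\"onwall on $\max_i\E|X_i-\xb_i|+\max_i\E|M_i-\Mb_i|$. It uses only the row sums of $w^N_{ij}$, uses second moments only for the fluctuation term (then Jensen), and so obtains a bound uniform in $i$. You work with the averaged quadratic error $e(t)$. That forces you to use the column-sum bound in (\ref{aswb}) as well, via $\sn\E\big(\sum_j w^N_{ij}|X_j-\xb_j|\big)^2\le C_w^2\,\frac1N\sum_j\E|X_j-\xb_j|^2$. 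This is fully adequate for (\ref{zEX}), since the flat distance only sees $\frac1N$-averages.

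For (\ref{zGC}) you invoke Lemma \ref{GC}, as the paper does, but your sketch of its proof takes a different route. The paper uses a Dudley-type multiscale argument: the covering Lemma \ref{cover}, nested partitions $A_{r,j}$ for $r_0\le r\le r_1$, and telescoping of the test function across scales. You use a single truncation radius and a single grid. That simpler route does give the same exponent, but only with the right bookkeeping, which your displayed error does not have.

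The variance of a cell satisfies $\E|(\mu_N-\nu_N)(Q)|^2\le Me^{At_*}\nu_N(Q)/N$. Cauchy--Schwarz over the $n\sim(R/\delta)^d$ cells then gives $\E\sum_Q|(\mu_N-\nu_N)(Q)|\le C\sqrt{n/N}$; this is the paper's (\ref{momest}). The total error is therefore $R^{-2}+\delta+(R/\delta)^{d/2}N^{-1/2}$. Choosing $\delta=\ve$ and $R=\ve^{-1/2}$ yields $\ve=N^{-1/(2+3d/2)}$. If you bound each cell by $N^{-1/2}$ and sum, as your $(R/\delta)^{d}N^{-1/2}$ indicates, you only get $N^{-1/(2+3d)}$.

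Your account of the role of $m$ and $N_0$ is also not what happens in the paper. There is no mass normalization and no Chebyshev step. The lower bound $\bar m$ enters only in Lemma \ref{cover}: there $\ve<\bar m$ lets one pick the radius $r_{N,k}$ with $\nu_N(\R^d)>\ve^{k/(k-2)}$, which is why $N_0$ depends on $m$. In your single-scale argument no normalization is needed at all. The flat distance compares unnormalized measures, and the fluctuation of the total mass is already contained in the cell variances. You can drop that step, and the bound then holds for all $N\ge 1$.
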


Before, we prove Theorem \ref{mainprob} we establish a couple of auxiliary results.

\begin{prop}
Under the assumptions (\ref{asK}), (\ref{aswb}) and (\ref{M}) the solutions to (\ref{part2}) satisfy
\eqq{M_i^0(x)  \exp\left(-t_*\norm{K_2}_{L^{\infty}(\R^d)} C_w e^{At_*}M\right) \leq \Mb_{i}(t,x) \leq M_i^0(x) e^{At_*} \leq Me^{At_*} \m{ for every } i=1,\dots, N.}{Mbound}
\end{prop}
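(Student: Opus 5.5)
The plan is to integrate equation (\ref{part2})(ii) explicitly along each realization. For fixed $x$, the equation for $\Mb_i$ is a scalar linear ODE. Its solution is
\[
\Mb_i(t,x)=M_i^0(x)\exp\left(At-\int_0^t\sum_{j=1}^N w^N_{ij}\ird K_2(\xb_i(s,x)-y)\tilde f_j(s,dy)\,ds\right).
\]
By Lemma \ref{exi} this is the unique solution. Both bounds will then come from two-sided estimates on the exponent. In particular, nonnegativity of $\Mb_i$ is inherited from $M_i^0\geq 0$, which is given by (\ref{M}).

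For the upper bound, I would use that $w^N_{ij}\geq 0$ by (\ref{aswl}) and $K_2\geq 0$ by (\ref{asK}). I also need $\tilde f_j(s,\cdot)$ to be a nonnegative measure. This holds because by (\ref{defft}) it is the push-forward of $\Mb_j(s,x)d\p(x)$, and $\Mb_j\geq 0$ by Lemma \ref{exi}. Hence the integral term in the exponent is nonnegative. This gives $\Mb_i(t,x)\leq M_i^0(x)e^{At}\leq M_i^0(x)e^{At_*}$, and (\ref{M}) gives the final bound by $Me^{At_*}$.

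For the lower bound, I drop the term $At\geq 0$ and bound the interaction term from above. The key observation is that the total mass satisfies
\[
\tilde f_j(s,\R^d)=\int \Mb_j(s,x)\,d\p(x)\leq Me^{At_*}.
\]
This follows from the upper bound just proved, or from Lemma \ref{exi}. Therefore
\[
\ird K_2(\xb_i-y)\tilde f_j(s,dy)\leq \norm{K_2}_{L^\infty(\R^d)}Me^{At_*}.
\]
Summing over $j$ with weights $w^N_{ij}$ and using the row bound (\ref{aswb}) gives at most $C_w\norm{K_2}_{L^\infty(\R^d)}Me^{At_*}$. Integrating over $s\in(0,t)\subset(0,t_*)$ bounds the exponent's negative part by $t_*\norm{K_2}_{L^\infty(\R^d)}C_we^{At_*}M$. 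This yields the left inequality of (\ref{Mbound}).

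The only delicate point is justifying the explicit representation and the positivity of $\tilde f_j$ without circularity, since the upper bound on $\Mb_j$ feeds the mass bound used in the lower bound. Lemma \ref{exi} already provides nonnegativity and the uniform bound $Me^{At_*}$, so I would simply invoke it. Alternatively, one can first derive the upper bound for all $i$ simultaneously from the sign argument, which needs only nonnegativity, and then use it for the lower bound.
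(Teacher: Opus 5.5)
Your proposal is correct and follows the paper's proof essentially line by line. Both use the explicit exponential formula for $\Mb_i$ and the sign argument ($w^N_{ij}\geq 0$, $K_2\geq 0$, $\tilde f_j\geq 0$) for the upper bound. For the lower bound, both bound $\sum_{j}w^N_{ij}\ird K_2(\xb_i-y)\tilde f_j(\tau,dy)$ by $\norm{K_2}_{L^\infty(\R^d)}C_w Me^{At_*}$ via the row-sum bound (\ref{aswb}) and $\Mb_j\leq Me^{At_*}$; the paper phrases this through $\max_j\sup_y\Mb_j$ rather than the total mass of $\tilde f_j$, which amounts to the same estimate. Your remark on avoiding circularity (prove the upper bound first using only nonnegativity) is a small clarification that the paper leaves implicit.
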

\begin{proof}
The estimate from above is already established in Lemma \ref{exi}. Indeed, we note that for each $i=1,\dots, N$ the random variables $\Mb_i(t)$ satisfy
\eqq{
\Mb_i(t) = M_i^0 \exp\left(\int_0^t \left[A-\sum_{j=1}^N w^N_{ij}\ird K_{2}(\xb_i(\tau)-y)\tilde{f}_j(\tau,dy) \right] \right).
}{intM}
Thus, by the assumptions (\ref{asK}) and (\ref{aswl})
\eqq{
\Mb_i(t) \leq M_i^0 e^{At}
}{mab}
and, due to (\ref{M}), we obtain the estimate from above.
In order to obtain the estimate from below we note that from the formula (\ref{intM}) we infer that $\Mb_i(t,x) \geq 0$ almost everywhere. Then, using (\ref{asK}), (\ref{aswl}),  (\ref{mab}) and (\ref{M}) we may estimate as follows
\[
\sum_{j=1}^N w^N_{ij}\ird K_{2}(\xb_i(\tau)-y)\tilde{f}_j(\tau,dy) \leq \norm{K_2}_{L^{\infty}(\R^d)} C_w \max_{1\leq j \leq N}\sup_{y\in \R^d}\Mb_j(\tau,y) \leq \norm{K_2}_{L^{\infty}(\R^d)} C_w e^{At_*}M.
\]
Inserting it in (\ref{intM}) we arrive at
\[
\Mb_i(t) \geq M_i^0 \exp(At)\exp\left(-t\norm{K_2}_{L^{\infty}(\R^d)} C_w e^{At_*}M\right) \geq M_i^0 \exp\left(-t_*\norm{K_2}_{L^{\infty}(\R^d)} C_w e^{At_*}M\right).
\]
\end{proof}

In the proof of next lemma we follow the approach from the proof of \cite[Proposition 3.2.]{JPS2025} to show that $(\xb_i,\Mb_i)_{i=1}^N$ well approximate $(X_i,M_i)_{i=1}^N$.

\begin{lemma}\label{exest}
Assuming  (\ref{asK}) -(\ref{M}), (\ref{ex}) and that $(X^0_i,M^0_i)$ and $(X^0_j,M^0_j)$ are independent  for every $i\neq j$, there exists a positive constant $C$ dependent only on $M,t_*,A,C_w,\norm{K_{1}}_{W^{1,\infty}(\R^d)},\norm{K_{2}}_{W^{1,\infty}(\R^d)}$, such that 
\[
\max_{1\leq i \leq N}\E|X_i(t) - \xb_i(t)| + \max_{1\leq i \leq N} \E|M_i(t) - \Mb_i(t)| \leq C \max_{1\leq i,j\leq N}\sqrt{w_{i,j}^N}.
\]
\end{lemma}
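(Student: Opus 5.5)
We will run a Gronwall argument on the quantity
\[
\Delta(t):=\max_{1\leq i\leq N}\E|X_i(t)-\xb_i(t)|+\max_{1\leq i\leq N}\E|M_i(t)-\Mb_i(t)|,
\]
following the proof of \cite[Proposition 3.2]{JPS2025}. We subtract (\ref{part2}) from (\ref{part}), integrate in time, take absolute values and expectations. For the position equation, the difference of the velocities of particle $i$ is split into three pieces, each inside $\sum_j w_{ij}^N$:
\begin{align*}
&\underbrace{\bigl(M_j K_1(X_i-X_j)-\Mb_j K_1(X_i-\xb_j)\bigr)}_{I_1}
+\underbrace{\bigl(\Mb_j K_1(X_i-\xb_j)-\Mb_j K_1(\xb_i-\xb_j)\bigr)}_{I_2}\\
&\quad+\underbrace{\Bigl(\Mb_j K_1(\xb_i-\xb_j)-\ird K_1(\xb_i-y)\tilde f_j(dy)\Bigr)}_{I_3}.
\end{align*}
The bounds for $I_1$ and $I_2$ are routine. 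Using $M_j\leq Me^{At_*}$ from Lemma \ref{exi} and $K_1\in W^{1,\infty}$, we get
\[
|I_1|+|I_2|\leq C\bigl(|M_j-\Mb_j|+|X_j-\xb_j|+|X_i-\xb_i|\bigr).
\]
After summing against $w^N_{ij}$ and using (\ref{aswb}), these contribute $C_w C\,\Delta(s)$.

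The mass equation is treated the same way. We write the nonlinearity as $M_i[A-\cdot]-\Mb_i[A-\cdot]$. Adding and subtracting $\Mb_i$ times the $M$-interaction produces $|M_i-\Mb_i|$ times a bounded factor; the boundedness uses $K_2\in L^\infty$, (\ref{aswb}) and $M_j\leq Me^{At_*}$. What remains is $\Mb_i\leq Me^{At_*}$ times a difference of interaction terms, which decomposes into analogues of $I_1,I_2,I_3$ with $K_2$ in place of $K_1$.

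The main obstacle is the fluctuation term $\E\bigl|\sum_j w^N_{ij} I_3\bigr|$. Let
\[
Z_j:=\Mb_j(t)K(\xb_i-\xb_j)-\ird K(\xb_i-y)\tilde f_j(t,dy).
\]
The key observation is that $\tilde f_j$ is exactly the law of $\xb_j$ weighted by $\Mb_j$, by (\ref{defft}). Hence, conditionally on $(\xb_i,\Mb_i)$ and for $j\neq i$, $Z_j$ has mean zero: $\E[\Mb_j\phi(\xb_j)]=\int\phi\,d\tilde f_j$ for fixed $\phi=K(\xb_i-\cdot)$, and this is legitimate because $(\xb_j,\Mb_j)$ is independent of $(\xb_i,\Mb_i)$ by Lemma \ref{exi}. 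By the same independence, the $Z_j$ with distinct $j,k\neq i$ are conditionally uncorrelated. Thus, by Cauchy--Schwarz,
\[
\E\Bigl|\sum_j w_{ij}^N Z_j\Bigr|\leq\Bigl(\sum_{j\neq i}(w^N_{ij})^2\E|Z_j|^2\Bigr)^{1/2}+w^N_{ii}\,\E|Z_i|.
\]
Since $|Z_j|\leq 2\norm{K}_{L^\infty}Me^{At_*}$, the right-hand side is at most
\[
C\Bigl(\max_{j}w^N_{ij}\sum_j w^N_{ij}\Bigr)^{1/2}+C\max_{i,j} w^N_{ij}\leq C\sqrt{C_w}\,\max_{i,j}\sqrt{w^N_{ij}}+C\max_{i,j}w^N_{ij}.
\]
The linear term is dominated by the square-root term once the weights are at most $1$, which holds for large $N$ by (\ref{aswl}). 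Some care is needed to justify the conditioning, since $\xb_i$ and $\tilde f_j$ are deterministic functionals of independent inputs; I would verify it via Fubini on the product structure.

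Collecting the estimates, we arrive at
\[
\Delta(t)\leq C\int_0^t\Delta(s)\,ds+C\,t_*\max_{i,j}\sqrt{w^N_{ij}},
\]
where $\Delta(0)=0$ and the constants depend only on $M,t_*,A,C_w$ and the $W^{1,\infty}$ norms of $K_1,K_2$. Gronwall's inequality then yields the claim.
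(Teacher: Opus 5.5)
Your proposal is correct and follows the paper's proof essentially step for step: the same splitting of the velocity and reaction differences into Lipschitz and mass-difference terms plus a fluctuation term, the same second-moment argument in which conditional independence kills the cross terms and leaves $\sum_j (w^N_{ij})^2\le C_w\max_j w^N_{ij}$, and Gr\"onwall. The paper conditions on $\xb_i$ only and keeps the diagonal term inside the second moment via $(a+b)^2\le 2a^2+2b^2$, rather than treating it separately.

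Two small remarks. First, the ``for large $N$'' caveat is unnecessary: (\ref{aswb}) gives $w^N_{ij}\le C_w$, hence $w^N_{ii}\le \sqrt{C_w}\max_{i,j}\sqrt{w^N_{ij}}$ for every $N$. Second, the vanishing of the cross terms $j\neq k$, in your argument as in the paper's, needs the three pairs $(\xb_l,\Mb_l)$, $l=i,j,k$, to be jointly independent. That is, it requires mutual rather than merely pairwise independence of the initial pairs $(X^0_l,M^0_l)$.
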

\begin{proof}

Let us subtract the equations (\ref{part2})$_{(i)}$ from (\ref{part})$_{(i)}$. 
\eqq{
\partial_t(X_i(t) - \xb_i(t)) = \sum_{j=1}^N w^N_{ij}\left(M_j(t)K_{1}(X_i(t)-X_j(t)) -  \ird K_{1}(\xb_i(t)-y)\tilde{f}_j(t,dy)\right).
}{nax}

We integrate the equation with respect to $\p$ and make use of the triangle inequality
\begin{align*}
  \partial_t\ird (X_i(t) - \xb_i(t)) d\p  &= \sum_{j=1}^N w^N_{ij}\ird M_j(t)[K_{1}(X_i(t)-X_j(t))-K_1(\xb_i(t)-\xb_j(t))] d\p \\
  &+ \sum_{j=1}^N w^N_{ij}\ird (M_j(t)-\Mb_j(t))K_1(\xb_i(t)-\xb_j(t)) d\p\\
  &+ \sum_{j=1}^N w^N_{ij} \ird \left( \Mb_j(t)K_1(\xb_i(t)-\xb_j(t))-  \ird K_{1}(\xb_i(t)-y)\tilde{f}_j(t,dy)\right)d\p.
\end{align*}
Hence,
\begin{align*}
\partial_t \E \abs{X_i(t) - \xb_i(t)} &\leq 
\sum_{j=1}^N w^N_{ij} \E \left( |M_j(t)|\abs{K_1(X_i(t)-X_j(t)) - K_1(\xb_i(t)-\xb_j(t))}\right)  \\ 
& + \E \abs{\sum_{i=1}^Nw_{ij}^N\left(M_j(t)-\Mb_j(t)\right)K_{1}(\xb_i(t)-\xb_j(t))}\\
& + \E \abs{\sum_{j=1}^N w^N_{ij}  \left( \Mb_j(t)K_1(\xb_i(t)-\xb_j(t))-  \ird K_{1}(\xb_i(t)-y)\tilde{f}_j(t,dy)\right)} =:\sum_{k=1}^3 I_k.
\end{align*}
We estimate, the first expression using (\ref{asK}), (\ref{aswb}) and (\ref{Mbound})
\eqq{
I_1 \leq 2\norm{K_{1}}_{W^{1,\infty}(\R^d)} Me^{At_*} C_w \max_{1\leq j \leq N}\E|X_j(t)-\xb_j(t)|.
}{I1}
Moreover, again by (\ref{asK}) and (\ref{aswb}) 
\eqq{
I_2 \leq \norm{K_1}_{L^{\infty}(\R^d)} C_w \max_{1\leq j \leq N}\E|M_j(t) - \Mb_j(t)|.
}{I2}

We note that by Lemma \ref{exi} the random variables $\xb_j(t)$ and $\Mb_j(t)$ are independent on $\xb_i(t)$ for each $t \in [0,t_*)$ if $i\neq j$, thus for $i\neq j$ we have 
\[
\E\left[\Mb_j(t)K_1(\xb_i(t)-\xb_j(t))| \xb_i(t)\right] = \ird\Mb_j(t,y) K_{1}(\xb_i(t,x)-\xb_j(t,y))d\p(y) = \ird  K_{1}(\xb_i(t,x)-y)\tilde{f}_{j}(t,dy).
\]
In order to estimate $I_3$ we firstly deal with second moment
\begin{align*}
 & \E \abs{\sum_{j=1}^N w^N_{ij}  \left( \Mb_j(t)K_1(\xb_i(t)-\xb_j(t))-  \ird K_{1}(\xb_i(t)-y)\tilde{f}_j(t,dy)\right)}^2 \\ 
  &\leq 2 \E \abs{\sum_{j=1, j\neq i}^N w^N_{ij}  \left( \Mb_j(t)K_1(\xb_i(t)-\xb_j(t))-  \ird K_{1}(\xb_i(t)-y)\tilde{f}_j(t,dy)\right)}^2\\
  &+ 2 \E \abs{ w^N_{ii}  \left( \Mb_i(t)K_1(0)-  \ird K_{1}(\xb_i(t)-y)\tilde{f}_i(t,dy)\right)}^2.
\end{align*}
With the first term we deal as follows
\[
\E \abs{\sum_{j\neq i}^N w^N_{ij}  \left( \Mb_j(t)K_1(\xb_i(t)-\xb_j(t))-  \ird K_{1}(\xb_i(t)-y)\tilde{f}_j(t,dy)\right)}^2
= \E \sum_{j\neq i}^N\sum_{k\neq i}^N w_{ij}^N w_{ik}^N  H_{i,j}(t)H_{i,k}(t),
\]
where we denoted
\[
H_{i,j}(t) = \Mb_j(t)K_1(\xb_i(t)-\xb_j(t))- \E [\Mb_j(t)K_1(\xb_i(t)-\xb_j(t))|\xb_i(t)].
\]
We will show that for $k\neq j \neq i$
\[
\E [H_{i,j}(t)H_{i,k}(t)] = 0.
\]
To that end, note that for $k\neq j\neq i$ random variables $H_{i,j}(t)$ and $H_{i,k}(t)$ are conditionally independent under the condition $\xb_i(t)$. Furthermore,  
since for any random variables $X,Y$ we have $\E(\E(X|Y)) = \E X$  and if $X$ and $Y$ are conditionally independent under condition $Z$ then $\E(XY|Z) = \E(X|Z)\E(Y|Z)$, we may write
\[
\E [H_{i,j}(t)H_{i,k}(t)] = \E \left[\E\left[H_{i,j}(t)H_{i,k}(t)|\xb_{i}(t)\right]\right] = \E \left[\E\left[H_{i,j}(t)|\xb_{i}(t)\right]\E\left[H_{i,k}(t)|\xb_{i}(t)\right]\right] = 0,
\]
where the last equality follows from $\E\left[G_{i,k}(t)|\xb_{i}(t)\right] = 0$, since for any random variables $X,Y$ there holds $\E(X-\E(X|Y)|Y) = E(X|Y) - E(X|Y) = 0.$ All in all, we obtain 
\[
\E \abs{\sum_{j=1}^N w^N_{ij}  \left( \Mb_j(t)K_1(\xb_i(t)-\xb_j(t))-  \ird K_{1}(\xb_i(t)-y)\tilde{f}_j(t,dy)\right)}^2
\]
\[
\leq  2\sum_{j=1}^N (w_{ij}^N)^{2} \E\left( \Mb_j(t)K_1(\xb_i(t)-\xb_j(t))-  \ird K_{1}(\xb_i(t)-y)\tilde{f}_j(t,dy)\right)^2
\]
\[
\leq 8 M^2e^{2At_*}\norm{K_{1}}_{L^{
\infty}(\R^d)}^2\sum_{j=1}^N (w_{ij}^N)^2 \leq  8 M^2e^{2At_*}\norm{K_{1}}_{L^{
\infty}(\R^d)}^2 C_w \max_{1\leq j\leq N}w_{ij}^N,
\]
where we applied (\ref{asK}), (\ref{Mbound}) and (\ref{aswb}).

Hence, by Jensen inequality 
\eqq{
I_3 \leq \sqrt{8} Me^{At_*}\norm{K_{1}}_{L^{
\infty}(\R^d)} \sqrt{C_w} \max_{1\leq j\leq N}\sqrt{w_{ij}^N}.
}{I3}

Combining (\ref{I1}), (\ref{I2}) and (\ref{I3}) we obtain for each $1\leq i \leq N$
\eqq{
\partial_t \E \abs{X_i(t) - \xb_i(t)} \leq C \left(\max_{1\leq i\leq N}\E \abs{X_i(t) - \xb_i(t)} +  \max_{1\leq i\leq N}\E\abs {M_i(t) - \Mb_i(t)}\right) + C \max_{1\leq i,j\leq N}\sqrt{w_{ij}^N},
}{xdif}
where $C$ is a positive constant dependent only on $M,A,t_*,C_w,\norm{K_{1}}_{W^{1,\infty}(\R^d)}$, which may change from line to line.
Similarly we subtract  (\ref{part2})$_{(ii)}$ from (\ref{part})$_{(ii)}$ and integrate with respect to $\p$. 
\begin{align*}
\partial_t \E |M_i(t)-\Mb_i(t)| &\leq A\E |M_i(t)-\Mb_i(t)|\\
&+\E\abs{\Mb_i(t)}\abs{\sum_{j=1}^Nw_{ij}^N\left(\ird K_{2}(\xb_i(t)-y)\tilde{f}_j(t,dy) - M_{j}(t)K_{2}(X_i(t)-X_j(t))\right)}.
\end{align*}

Note that the last term under expectation differs from the right hand side of $(\ref{nax})$ only by the change of kernel $K_1$ into $K_2$. Hence, repeating the arguments leading to (\ref{xdif}), we obtain the following estimate for every $1\leq i \leq N$
\eqq{
\partial_t \E |M_i(t)-\Mb_i(t)| \leq C \max_{1\leq i \leq N}\E |M_i(t)-\Mb_i(t)| +C\max_{1\leq i\leq N}\E \abs{X_i(t) - \xb_i(t)} + C\max_{1\leq i,j \leq N}\sqrt{w_{ij}^N},
}{mdif}
where $C$ is a positive constant dependent only on $M,t_*,A,C_w,\norm{K_{2}}_{W^{1,\infty}(\R^d)}$. We integrate  (\ref{xdif}) and (\ref{mdif}) w.r.t. time and then add the resulting inequalities  to the result
\begin{align*}
 &\max_{1\leq i \leq N} \E |X_i(t)-\xb_i(t)|+\max_{1\leq i \leq N}\E |M_i(t)-\Mb_i(t)|\leq \\
&C\int_0^t \left(\max_{1\leq i \leq N}\E |X_i(\tau)-\xb_i(\tau)|+\max_{1\leq i \leq N}\E |M_i(\tau)-\Mb_i(\tau)|\right) d\tau+ Ct\max_{1\leq i,j \leq N}\sqrt{w_{ij}^N}, 
\end{align*}
which by Gr\"onwall inequality gives
\[
\max_{1\leq i \leq N} \E |X_i(t)-\xb_i(t)|+ \max_{1\leq i \leq N} \E |M_i(t)-\Mb_i(t)| \leq C\max_{1\leq i,j \leq N}\sqrt{w_{ij}^N} 
\]
for some positive constant $C$ dependent only on $M,A,t_*,C_w,\norm{K_{1}}_{W^{1,\infty}(\R^d)},\norm{K_{2}}_{W^{1,\infty}(\R^d)}$.
\end{proof}

The last ingredient in the proof of Theorem \ref{mainprob} is the following extension of Glivenko-Cantelli lemma. We postpone the proof of the lemma to Appendix. 

\begin{lemma}[Glivenko-Cantelli]\label{GC}
Discuss the probabilistic space $(\R^d,\mathcal{B}(\R^d),\p)$ with  $d\geq 1$. Let $X_i:\mathbb{R}^d\rightarrow \mathbb{R}^d$ and $M_i:\mathbb{R}^d \rightarrow \mathbb{R}_{+}$ for $i=1,\dots,N$, $N \in \mathbb{N}$ be two sequences of random variables such that $(X_i,M_i)$ and $(X_j,M_j)$ are independent for every $i\neq j$. Furthermore, assume that $(X_i)_{i=1}^N$ has uniformly bounded second moments, i.e.
\eqq{\sup_{N \in \mathbb{N}}\max_{1\leq i \leq N} \mathbb{E}|X_i|^2 \leq C}{xas}
and $(M_i)_{i=1}^N$  satisfies 
\eqq{\max_{1\leq i \leq N} \sup_{x \in \R^d} |M_i(x)| \leq M, \hd \hd \inf_{N\in \mathbb{N}}\min_{1\leq i \leq N}\mathbb{E}M_i \geq \bar{m},}{mas}
for some positive constants $\bar{m},M > 0$. Let us define
\[
\mu_N := \frac{1}{N}\sum_{i=1}^N M_i\delta_{X_i}, \hd \hd \nu_N:= \frac{1}{N}\sum_{i=1}^N d\tilde{f}_i, \hd \tilde{f}_i = X_{i\#}[M_i d\mathbb{P}].
\]
Then, there exists a positive $\bar{C} = \bar{C}(M,C,d)$ such that for every $N > \max\{1,\bar{m}^{-\frac{1}{2+3d/2}}\} $
\[
\mathbb{E} d_{F}\left(\mu_N,\nu_N\right) \leq \bar{C}(M,C,d) N^{-\frac{1}{2+3d/2}}.
\]
\end{lemma}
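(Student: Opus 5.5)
Fix $R>0$ and $h>0$, with $R$ large and $h$ a small mesh size; both will be tuned as powers of $N$ at the end. The proof follows the classical Glivenko--Cantelli scheme: truncate, discretize the test functions on a grid, and estimate the variance of each cell.

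\textbf{Truncation.} For a test function $\phi$ with $\norm{\phi}_{L^\infty}\le 1$ and $\norm{\nabla\phi}_{L^\infty}\le1$, split $\phi=\phi\chi_R+\phi(1-\chi_R)$, where $\chi_R$ is a Lipschitz cutoff equal to $1$ on $B_R$ and supported in $B_{2R}$. Since $M_i\le M$ and by (\ref{xas}) together with Chebyshev's inequality, the contribution of the tails to both $\mu_N$ and $\nu_N$ is bounded in expectation by
\[
\frac{M}{N}\sum_i \p(|X_i|>R)\le MC R^{-2}.
\]

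\textbf{Discretization and variance.} On $B_{2R}$, cover by cubes $Q_k$ of side $h$; there are about $(R/h)^d$ of them. Replace $\phi\chi_R$ by its piecewise constant approximation $\sum_k\phi(x_k)\mathbb{I}_{Q_k}$, at a cost of order $h$ times the total masses. The total mass of $\mu_N$ is at most $M$ and that of $\nu_N$ is $\frac1N\sum\E M_i\le M$, so this cost is $O(Mh)$. It remains to bound
\[
\E\sum_k\abs{\frac1N\sum_i\bigl(M_i\mathbb{I}_{Q_k}(X_i)-\E[M_i\mathbb{I}_{Q_k}(X_i)]\bigr)}.
\]
Here I use $\tilde f_i(Q_k)=\E[M_i\mathbb{I}_{Q_k}(X_i)]$. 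Since the pairs $(X_i,M_i)$ are independent, Cauchy--Schwarz and the variance of a sum of independent terms give, for each $k$, a bound of
\[
\frac1N\Bigl(\sum_i\E M_i^2\mathbb{I}_{Q_k}(X_i)\Bigr)^{1/2}\le \frac{M^{1/2}}{\sqrt N}\Bigl(\frac1N\sum_i\tilde f_i(Q_k)\Bigr)^{1/2}.
\]
Summing over $k$ and applying Cauchy--Schwarz over the $(R/h)^d$ cubes gives
\[
\frac{M^{1/2}}{\sqrt N}\,(R/h)^{d/2}\,M^{1/2}.
\]
Crucially, this estimate is uniform in $\phi$, since $|\phi(x_k)|\le 1$ and the cells are fixed independently of $\phi$; so the supremum over $\phi$ can be taken inside the sum over $k$ before taking expectation.

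\textbf{Optimization.} Altogether,
\[
\E d_F(\mu_N,\nu_N)\lesssim R^{-2}+h+N^{-1/2}R^{d/2}h^{-d/2}.
\]
Choosing $R$ and $h$ as suitable powers of $N$ gives a power rate. The stated exponent $\frac1{2+3d/2}$ suggests a slightly cruder choice, for instance $R=h^{-1/2}$ combined with a bound of the form $N^{-1/2}h^{-3d/4-?}$. I would match the exponents by taking $h=N^{-\frac1{2+3d/2}}$ and checking that every term is bounded by this quantity.

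\textbf{Role of the lower bound on $\E M_i$.} The condition on $N$ involving $\bar m$ presumably enters if one normalizes the measures to probability measures, dividing by the masses $\frac1N\sum M_i$ and $\frac1N\sum\E M_i\ge\bar m$, in order to reuse a classical probabilistic Glivenko--Cantelli estimate. The direct argument above avoids this normalization. The main obstacle I expect is bookkeeping: getting the exact exponent, and making sure the supremum over the Lipschitz class is controlled uniformly. The cell discretization handles the uniformity, and the exponent is fixed by balancing the three error terms.
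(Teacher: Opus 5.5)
Your argument is correct, and it follows a genuinely different and simpler route than the paper's.

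\textbf{How the two proofs differ.} The paper uses Dudley's chaining proof. Lemma~\ref{cover} gives, at each scale $3^{-r}$, a partition of $\mathbb{R}^d$ into at most $K3^{k(r+2)}$ sets of diameter at most $3^{-r-1}$, plus an exceptional set of $\nu_N$-mass at most $3^{-k(r+2)/(k-2)}$. These sets are merged into nested families $A_{r,j}$, $r_0\le r\le r_1$, and $\phi$ is telescoped across levels. Each level is controlled by the same variance bound $\mathbb{E}|(\mu_N-\nu_N)(U)|^2\le M\nu_N(U)/N$ that you use. You instead work at a single scale after a second-moment truncation.

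\textbf{Your exponent closes exactly.} With $R=h^{-1/2}$, your last term is precisely $N^{-1/2}h^{-3d/4}$, so your ``?'' is zero. Balancing it against $h$ gives $h=N^{-\frac{1}{2+3d/2}}$. This choice is the optimum of your three-term bound over powers of $N$, not a cruder one.

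\textbf{Why the paper gets the same exponent.} Chaining puts a weight $3^{1-r}$ instead of $1$ on the level-$r$ fluctuations. In the paper this gain is exactly offset by the size of its covering bound $K\varepsilon^{-k}$, $k=2+\frac32 d$. Its dominant finest-level term is $N^{-1/2}\varepsilon^{-(k-2)/2}=N^{-1/2}\varepsilon^{-3d/4}$, which is your $N^{-1/2}(R/h)^{d/2}$ at $R=h^{-1/2}$. So the multiscale machinery, which in general can give sharper rates, buys nothing in this execution.

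\textbf{What your route buys.} It removes the covering lemma, the nested partitions and any use of $\bar m$, and it holds for every $N\ge 1$. Contrary to your guess, $\bar m$ does not enter the paper through a normalization to probability measures. It is used only in Lemma~\ref{cover}, to make $\nu_N(\mathbb{R}^d)$ exceed the exceptional mass so that the radius $r_{N,k}$ can be chosen. This produces the requirement $N^{-1/k}<\bar m$, i.e.\ $N>\bar m^{-k}$, which is not quite the threshold written in the statement. Your proof makes this issue moot.

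\textbf{Points to fix in the write-up.}
Evaluate $\phi\chi_R$, not $\phi$, at points $x_k\in Q_k$.
Take the cubes half-open so that they partition a box containing $B_{2R}$; this is what gives $\sum_k\nu_N(Q_k)\le M$.
Use $\diam Q_k=\sqrt d\,h$ and $\mathrm{Lip}(\phi\chi_R)\le 1+1/R\le 2$, for a cutoff with $|\nabla\chi_R|\le 1/R$.
Bound the number of cubes by $(5R/h)^d$, which is valid since $h\le 1\le R$.
With the tail contributing $2MCR^{-2}$, you obtain $\mathbb{E}d_F(\mu_N,\nu_N)\le(2MC+4\sqrt d\,M+5^{d/2}M)N^{-\frac{1}{2+3d/2}}$, so $\bar C$ depends only on $M,C,d$, as required.
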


At last, we are ready to prove the main theorem of this chapter.

\begin{proof}[Proof of Theorem \ref{mainprob}]
Let us begin with (\ref{zGC}). Note that due to estimate (\ref{Mbound}) and the assumption~(\ref{Mb}) 
\eqq{
\max_{1\leq i \leq N}\sup_{t\in (0,t_*)}\sup_{x\in \R^d}\Mb_i(t,x) \leq e^{At_*}M, \hd \hd  \min_{1\leq i \leq N}\inf_{t\in (0,t_*)}\E\Mb_i(t) \geq m  \exp\left(-t_*\norm{K_2}_{L^{\infty}(\R^d)} C_w e^{At_*}M\right).
}{Mesti}
Furthermore, integrating (\ref{part2})$_{(i)}$ w.r.t. time and using  (\ref{asK}), (\ref{aswl}),(\ref{Mbound}) we obtain
\eqq{
\E \abs{\xb_i(t)}^2 \leq \E\abs{X^0_i + t\norm{K_1}_{L^{\infty}(\R^d)}C_wM e^{At_*}}^2 \leq 2 \E\abs{X^0_i}^2 + 2\left(t_*\norm{K_1}_{L^{\infty}(\R^d)}C_wM e^{At_*}\right)^2.
}{secmobound}
Hence, due to (\ref{ex}), there exists $C = C(C_1,t_*,M,A,C_w,\norm{K_1}_{L^{\infty}(\R^d)})$ such that $\E \abs{\xb_i(t)}^2 \leq C$.
Thus, we may apply Lemma \ref{GC} with $X_i = \xb_i(t)$, $M_i = \Mb_i(t)$ and we arrive at (\ref{zGC}). To show (\ref{zEX}) we perform the following estimate
\[
\E \sup_{\norm{\Phi}_{L^\infty}\leq 1, \norm{\nabla \Phi}_{L^\infty}\leq 1 }\abs{\sn \left[M_i(t)\Phi(X_i(t)) - \Mb_i(t)\Phi(\xb_i(t))\right]}
\]
\[
\leq \sn \Big[\E\abs{M_i(t)-\Mb_i(t)} + \E\left[\abs{\Mb_i(t)}\abs{\xb_i(t)-X_i(t)}\right]\Big]
\leq (1+Me^{At_{*}})C \max_{1\leq i,j\leq N}\sqrt{w_{ij}^N},
\]
where we applied Lemma \ref{exest} together with (\ref{Mbound}). Thus, (\ref{zEX}) is proven. The convergence (\ref{finalp}) is a direct consequence of (\ref{zGC}), (\ref{zEX}) and the assumption (\ref{aswl}). It remains to show that $\tilde{f}_i$ is a measure solution to (\ref{semic}). To this end, note that by the definition of $\tilde{f}_{i}$  for any $\vf \in C_{c}^1(\R^d)$ we have
\begin{align*}
 &\partial_t \ird \vf(x)\tilde{f}_i(t,dx) = \partial_t \ird \vf(\xb_i(t,x))\Mb_i(t,x)d\p(x)  \\
 &=\ird \nabla \vf(\xb_i(t,x)) \partial_t \xb_i(t,x) \Mb_i(t,x)d\p(x) + \ird  \vf(\xb_i(t,x)) \partial_t \Mb_i(t,x)d\p(x)\\
 &=\ird \nabla \vf(\xb_i(t,x)) \sum_{j=1}^N w^N_{ij} \ird K_{1}(\xb_i(t,x)-y)\tilde{f}_j(t,dy) \Mb_i(t,x)d\p(x)\\
 &+  \ird  \vf(\xb_i(t,x)) \Mb_i(t,x)\left[A-\sum_{j=1}^N w^N_{ij}\ird K_{2}(\xb_i(t,x)-y)\tilde{f}_j(t,dy)\right]d\p(x)\\
 & = \ird \nabla \vf(x) \sum_{j=1}^N w^N_{ij} \ird K_{1}(x-y)\tilde{f}_j(t,dy) \tilde{f}_i(t,dx)\\
 &+ \ird  \vf(x) \left[A-\sum_{j=1}^N w^N_{ij}\ird K_{2}(x-y)\tilde{f}_j(t,dy)\right]\tilde{f}_i(t,dx).
\end{align*}
Hence, indeed $\tilde{f}_i$ is a measure solution to (\ref{semic}), which finishes the proof of the theorem.
\end{proof}

\begin{section}{ Mean-field limit by the theory of extended graphons}
In this section we show that the approach introduced in \cite{JPS2025} may be extended to our non-conservative framework. Firstly, we recall the necessary definitions and results from \cite{JPS2025}. Subsequently, we extend some of the results from \cite{JPS2025} to the non-conservative setting and finally give a proof of Theorem \ref{maintheorem}.\\
While introducing the notion of extended graphon, we stick to the original notation from \cite{JPS2025} and employ the subscripts in the notation of spaces to indicate in which variable the kernel belongs to a certain space.

\begin{subsection}{Introduction to the setting from \cite{JPS2025}}

We begin with the crucial definition of the extended graphon.

\begin{defi}\label{space}\cite[Definition 4.5 and Definition 4.6]{JPS2025}
We denote by  $L^\infty_\xi\mathcal{M}_\zeta$ the weak$^*$ Bochner space which is dual  to $L^1_\xi([0,1]; C_\zeta([0,1]))$, i.e., $w \in L^\infty_\xi\mathcal{M}_\zeta$  if the map  $\xi\in [0,\ 1]\mapsto w(\xi,d\zeta)\in \mathcal{M}([0,\ 1])$ is  weak$^*$ measurable (for every $\phi \in C([0,1])$ the map: $ 
\xi\mapsto \izj \phi(\zeta)w(\xi,d\zeta)$ is  measurable),  essentially bounded and 
$$\Vert w\Vert_{L^\infty_\xi\mathcal{M}_\zeta}:=\sup_{\Vert \phi\Vert_{C([0, 1])}\leq 1}\esssup_{\xi\in (0, 1)}\left\vert\int_{0}^1\phi(\zeta)\,w(\xi,d\zeta)\right\vert < \infty,$$
where we identify the elements $w_1,w_2$, if for every $\phi \in C([0,1])$, we have $  \izj \phi(\zeta)w_1(\xi,d\zeta) = \izj \phi(\zeta)w_2(\xi,d\zeta) \hd a.e.$
Similarly, we define $L^\infty_\zeta\mathcal{M}_\xi$. 
We call $w$ an extended graphon if $w \in L^{\infty}_{\xi}\mathcal{M}_{\zeta}\cap L^{\infty}_{\zeta}\mathcal{M}_{\xi}$ and we introduce the notation
\[
\norm{w}:= \max\{\norm{w}_{L^{\infty}_{\xi}\mathcal{M}_{\zeta}}, \norm{w}_{L^{\infty}_{\zeta}\mathcal{M}_{\xi}}\}.
\]
\end{defi}

\begin{remark}\cite[ page 683]{JPS2025}
Since the space $C([0,1])$ is separable, the map: $
\xi \mapsto \norm{w(\xi,\cdot)}_{\mathcal{M}([0,1])}$ is measurable and essentially bounded. Hence the equivalence relation simplifies to $\norm{w_1(\xi,\cdot)}_{\mathcal{M}([0,1])} = \norm{w_2(\xi,\cdot)}_{\mathcal{M}([0,1])}$ for almost all $\xi$. Also
\[
\norm{w}_{L^{\infty}_{\xi}\mathcal{M}_{\zeta}} = \esssup_{\xi\in [0,1]}\norm{w(\xi,\cdot)}_{\mathcal{M}([0,1])}.
\]
\end{remark}

One of the primary contributions in \cite{JPS2025} is extending the operator $\Phi\rightarrow \int_0^1\Phi(\zeta)w(\cdot,d\zeta)$ from continuous functions $\Phi$ to merely essentially bounded functions. The result reads as follows.

\begin{lemma}\cite[Lemma 4.7 and Lemma 4.8]{JPS2025}\label{impoest}
The bilinear,  bounded operator
\[
\begin{array}{ccl}
(L^\infty_\xi\mathcal{M}_\zeta\cap L^\infty_\zeta\mathcal{M}_\xi)\times C_\zeta([0,1]) & \longrightarrow & L^\infty_\xi((0,1)),\\
(w,\phi) & \longmapsto & \int_0^1\phi(\zeta)\,w(\cdot,d\zeta).
\end{array}
\]
 extends to a bounded operator from $(L^\infty_\xi\mathcal{M}_\zeta\cap L^\infty_\zeta\mathcal{M}_\xi)\times L^\infty_\zeta((0,1))$ to $L^\infty_\xi((0,1))$. 
Furthermore, for any $w\in L^\infty_\xi\mathcal{M}_\zeta\cap L^\infty_\zeta\mathcal{M}_\xi$ and $\phi\in L^\infty((0,1))$
\[
\left\|\int_0^1 \phi(\zeta)\,w(\cdot,d\zeta)\right\|_{L^1((0,1))}\leq \|w\|_{L^\infty_\zeta \mathcal{M}_\xi}\,\|\phi\|_{L^1((0,1))}, \hd 
\left\|\int_0^1 \phi(\zeta)\,w(\cdot,d\zeta)\right\|_{L^\infty((0,1))}\leq \|w\|_{L^\infty_\xi \mathcal{M}_\zeta}\,\|\phi\|_{L^\infty((0,1))}.
\]
In addition, for any uniformly bounded sequence $(\phi_n)_{n\in \mathbb{N}} \in L^{\infty}((0,1))$ such that $\phi_n\rightarrow \phi$ in $L^1((0,1))$ and a sequence   $w_n\overset{*}{\rightharpoonup} w$  in $L^\infty_\xi\mathcal{M}_\zeta\cap L^\infty_\zeta\mathcal{M}_\xi$ the following convergence holds  weak$^*$ topology on $L^{\infty}((0,1))$
  \[
\int_0^1\phi_n(\zeta)\,w_n(\cdot,d\zeta)\overset{*}{\rightharpoonup} \int_0^1 \phi(\zeta)w(\cdot,d\zeta).
\]
Similarly, the bilinear, bounded operator
\[
\begin{array}{ccl}
(L^\infty_\xi\mathcal{M}_\zeta\cap L^\infty_\zeta\mathcal{M}_\xi)\times C_\zeta([0,1]; B_x^*) & \longrightarrow & L^\infty_\xi((0,1); B_x^*),\\
(w,\phi) & \longmapsto & \int_0^1\phi(\cdot,\zeta)w(\cdot,d\zeta),
\end{array}
\]
where $B_x^*$ denotes a Banach space, extends to a bounded operator from $(L^\infty_\xi\mathcal{M}_\zeta\cap L^\infty_\zeta\mathcal{M}_\xi)\times L^\infty_\zeta((0,1); B_x^*)$ to $L^\infty_\xi((0,1); B_x^*)$, and  we have the following estimates
\begin{align*}
  \left\|\int_0^1 \phi(\cdot,\zeta)\,w(\cdot,d\zeta)\right\|_{L^1((0,1); B_x^*)}&\leq \|w\|_{L^\infty_\zeta \mathcal{M}_\xi}\,\|\phi\|_{L^1((0,1); B_x^*)},\\
  \left\|\int_0^1 \phi(\cdot,\zeta)\,w(\cdot,d\zeta)\right\|_{L^\infty((0,1); B_x^*)}&\leq \|w\|_{L^\infty_\xi \mathcal{M}_\zeta}\,\|\phi\|_{L^\infty((0,1); B_x^*)},
\end{align*}
for any $w\in L^\infty_\xi\mathcal{M}_\zeta\cap L^\infty_\zeta\mathcal{M}_\xi$ and $\phi\in L^\infty((0,1); B_x^*)$. 
\end{lemma}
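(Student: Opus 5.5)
The plan is to follow the strategy of \cite[Lemmas 4.7--4.8]{JPS2025}: prove the statement first for continuous $\phi$, then extend by density. Everything rests on the duality between $L^\infty_\zeta\mathcal{M}_\xi$ and $L^1_\zeta(C_\xi)$.

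\textbf{Step 1: bounds for continuous $\phi$.} For $\phi\in C([0,1])$ the function $\xi\mapsto\int_0^1\phi(\zeta)w(\xi,d\zeta)$ is measurable by weak$^*$ measurability. The $L^\infty$ bound follows directly from $\esssup_\xi\norm{w(\xi,\cdot)}_{\mathcal{M}}\le\norm{w}_{L^\infty_\xi\mathcal{M}_\zeta}$. For the $L^1$ bound I would test against $\psi\in C([0,1])$ with $\norm{\psi}_\infty\le1$. The key identity is the Fubini-type relation
\[
\int_0^1\psi(\xi)\int_0^1\phi(\zeta)\,w(\xi,d\zeta)\,d\xi=\int_0^1\phi(\zeta)\int_0^1\psi(\xi)\,w(d\xi,\zeta)\,d\zeta ,
\]
which expresses that both sides define the same measure $w(d\xi,d\zeta)$ on $[0,1]^2$. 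Granting it, the right-hand side is at most $\norm{w}_{L^\infty_\zeta\mathcal{M}_\xi}\norm{\phi}_{L^1}$. Taking the supremum over $\psi$, which is dense in the unit ball of $L^\infty$ tested against $L^1$ functions, gives the $L^1$ estimate.

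\textbf{Step 2: extension to $L^\infty$.} Given $\phi\in L^\infty$, I choose continuous $\phi_n\to\phi$ in $L^1$ with $\norm{\phi_n}_\infty\le\norm{\phi}_\infty$, for instance by mollification. The $L^1$ estimate shows the images form a Cauchy sequence in $L^1$, so they have a limit. That limit is independent of the approximating sequence, and the $L^\infty$ bound passes to it by lower semicontinuity. Bilinearity and both estimates then persist. The Banach-valued version, with $\phi\in L^\infty(B_x^*)$ approximated by simple or continuous $B_x^*$-valued functions, follows the same steps with norms replacing absolute values.

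\textbf{Step 3: joint convergence.} Take $\phi_n\to\phi$ in $L^1$, uniformly bounded, and $w_n\overset{*}{\rightharpoonup}w$. I split
\[
\int\phi_nw_n-\int\phi w=\int(\phi_n-\tilde\phi)w_n+\int\tilde\phi(w_n-w)+\int(\tilde\phi-\phi)w,
\]
where $\tilde\phi$ is continuous and close to $\phi$ in $L^1$. I then test the three terms against $\psi\in L^1$, and by boundedness it suffices to take $\psi$ continuous.
\begin{itemize}
\item The first and third terms are small by the $L^1$ estimate with $\psi$ bounded. This uses that $\sup_n\norm{w_n}<\infty$, which holds by the uniform boundedness principle.
\item The middle term tends to zero by the definition of weak$^*$ convergence, since $\psi\otimes\tilde\phi\in L^1_\xi(C_\zeta)$.
\end{itemize}

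\textbf{Main obstacle.} I expect the main difficulty to be the Fubini identity in Step 1, that is, showing that the two disintegrations of $w$ define the same measure on $[0,1]^2$. This is where membership in both spaces is used. I would first interpret the intersection as a single measure on the square, via the dual pairing with $C([0,1]^2)$, whose disintegrations in each variable are the given ones. If that interpretation is not built into Definition \ref{space}, I would verify the identity on tensor products $\psi\otimes\phi$ and extend by density.
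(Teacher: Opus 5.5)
The paper does not prove this lemma; it quotes it from Jabin--Poyato--Soler (Lemmas 4.7--4.8), so there is no internal argument to compare with. Your reconstruction is correct. The $L^1$ bound by duality with continuous $\psi$ works as written. So does the extension by $L^1$-density, with the $L^\infty$ bound surviving an a.e.\ limit. The three-term splitting for the joint convergence also goes through, with $\sup_n\|w_n\|<\infty$ by Banach--Steinhaus and the middle term handled by pairing with $\psi\otimes\tilde\phi\in L^1_\xi C_\zeta$.

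Two small remarks.

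First, what you call the main obstacle is a matter of definition, not something to prove. $C([0,1]^2)$ is norm-dense in both preduals $L^1_\xi C_\zeta$ and $L^1_\zeta C_\xi$, so both spaces embed injectively into $\mathcal{M}([0,1]^2)$. The intersection in Definition \ref{space} only makes sense inside that common space. Your identity for $\psi\otimes\phi$ is then equivalent (by density of tensor combinations) to the two representations defining the same functional on $C([0,1]^2)$. Your fallback of ``verifying it on tensor products'' therefore has nothing independent to check.

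Second, the $B_x^*$-valued case needs one more line than ``norms replacing absolute values'', because your Step 1 tests only against scalar $\psi$. One way is to approximate a continuous $\phi$ uniformly by $\sum_k\chi_k(\zeta)\phi(\zeta_k)$, with $(\chi_k)$ a continuous partition of unity, and apply the scalar $L^1$ bound to each nonnegative $\chi_k$. Alternatively, use $\bigl\|\int\phi(\zeta)\,w(\xi,d\zeta)\bigr\|\le\int\|\phi(\zeta)\|\,|w|(\xi,d\zeta)$ together with the fact that $|w|$ disintegrates as $|w(\xi,\cdot)|\,d\xi$ and as $|w(\cdot,\zeta)|\,d\zeta$.

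Also, the norm-density of continuous functions in $L^1((0,1);B_x^*)$ that you use requires $\phi$ to be strongly (Bochner) measurable. If $L^\infty_\zeta B_x^*$ is instead read as a weak$^*$ Bochner space, analogously to Definition \ref{space}, define the extension as the adjoint of $\psi\mapsto\int_0^1\psi(\xi)\,w(d\xi,\cdot)$ acting on $L^1_\xi B_x$.
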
  

Another fundamental development in \cite{JPS2025} is an introduction of the new observables indexed by finite trees. Before we recall the notion of these new observables, let us present the basic definitions from graph theory.

\begin{defi}(cf. \cite[Definition 4.1]{JPS2025})
~
\begin{enumerate}
\item We call a   (simple) graph a pair $G=(V,E)$ where $V$ is a finite set (and represents vertices) and $E\subseteq \{(i,j)\in V\times V:\,i\neq j\}$ ( and represent the edges). We call $G=(V,E)$ a weighted graph if it is endowed with a weight function $W:V\times V\rightarrow \mathbb{R}$ such that $E=\{(i,j)\in V\times V:\, W(i,j)\neq 0\}$. We denote the number of vertices (or order) of $G$  by $\vert G\vert:=\# V$. 
\item A graph $T=(V,E)$ is called a rooted directed tree if:
\begin{itemize}
\item $V=\{1,2,\dots\}$ and the vertex indexed by $1$ is called the root, 
\item for every $u\in V$, $(u,1)\notin E$,
\item for every $u\in V$, $u\neq 1$, there exists exactly one $v \in V$ such that $(v,u) \in E$, 
\item for every $u\in V$, $u\neq 1$, there exists a directed path from $\{1\}$ to $u$.
\end{itemize}
A vertex in tree which has only one edge connecting it with another vertex is called a leaf.
\item We define the family $\Tree_n$ of  labeled trees of order $n$ by the following recursive formula
\begin{align*}
\Tree_1&:=\{T_1\},\\
\Tree_{n+1}&:=\{T+i:\,T\in \Tree_n,\,i\in \{1,\ldots,n\}\},\quad n\in \mathbb{N},
\end{align*}
where $T_1$ is the  tree with only one vertex $\{1\}$ and for any tree $T$ with vertices $\{1,\ldots,n\}$ by $T+i$ we denote a tree created from $T$ by adding a leaf indexed by $n+1$ to an $i$-th vertex. The family of all labeled trees of arbitrary order is then defined by $\Tree:=\cup_{n=1}^\infty\Tree_n$. 
\end{enumerate}
\end{defi}

Now we are ready to define the operator $\tau$ which applied to particular graphon $w$ and function $f$ gives rise to the new observable.  

\begin{defi}\cite[Definition 4.13]{JPS2025}
For any $T\in \Tree$,  $w\in L^\infty_\xi\mathcal{M}_\zeta \cap L^\infty_\zeta\mathcal{M}_\xi$ and  $f\in L^\infty((0,1)),L^1(\R^d)
)$ we formally define the operator
\[
\tau(T,w,f)(x_1,\ldots,x_{|T|}):=\int_{[0,\ 1]^{|T|}} \prod_{(k,l)\in E(T)} w(\xi_k,\xi_l)\,\prod_{m\in V(T)} f(x_m,\xi_m)\,d\xi_1\dots d\xi_{|T|},
\]
If $f$ also depends on variable $t$, we include time dependence in a definition in a natural way.
\end{defi}

The definition above is given formally, since at the moment, we are not able to say whether $\tau$ is well defined for $w$ being merely a graphon. One may easily see that in case $w \in L_\xi^{\infty}((0,1);L_\zeta^{1}(0,1))\cap L_\zeta^{\infty}((0,1);L_\xi^{1}(0,1))$ the definition of $\tau$ is correct. The proper extension to the case $w\in L^\infty_\xi\mathcal{M}_\zeta \cap L^\infty_\zeta\mathcal{M}_\xi$ is a substantial part of \cite[Chapter 5]{JPS2025}. Below we present one of the main results from \cite{JPS2025}, which not only justifies the general definition of $\tau$ but also states the key convergence result. 

\begin{theo}\cite[Theorem 5.1]{JPS2025}\label{lg}
The definition of $\tau(T,w,f)$ can be uniquely extended for any $w\in L^\infty_\zeta\mathcal{M}_\xi \cap L^\infty_\xi\mathcal{M}_\zeta$ and $f\in L^\infty((0,1);(L^{1}\cap L^\infty)(\R^d))$. Furthermore, for any two sequences $\{w_N\}_{N\in \mathbb{N}}$ and $\{f_N\}_{N\in \mathbb{N}}$ such that 
    \begin{enumerate}
    \item $\quad \displaystyle \sup_{N\in \mathbb{N}} \sup_{\xi\in [0,\ 1]} \int_0^1 |w_N(\xi,\zeta)|\,d\zeta\, <\infty, \quad \sup_{N\in \mathbb{N}} \sup_{\zeta\in [0,\ 1]} \int_0^1 |w_N(\xi,\zeta)|\,d\xi\, <\infty,$\\
    \item $\quad \displaystyle\sup_{N\in \mathbb{N}} \|f_N\|_{L^\infty((0,1); (W^{1,1}\cap W^{1,\infty})(\R^d))}<\infty$,\\
   \end{enumerate}
 there exists an extracted subsequence (still denoted by $N$) such that for every $T\in \Tree$ and every $1\leq p<\infty$.
    \[
\tau(T,w_N,f_N)\rightarrow \tau(T,w,f)\quad \mbox{in}\quad L^p_{loc}(\mathbb{R}^{d\,|T|}),
    \]
as $N\rightarrow \infty$, for some  $w\in L^\infty_\zeta\mathcal{M}_\xi \cap L^\infty_\xi\mathcal{M}_\zeta$ and $f\in L^\infty((0,1); (W^{1,1}\cap W^{1,\infty})(\R^d))$.  
  \end{theo}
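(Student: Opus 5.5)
The plan is in three stages: first give a rigorous meaning to $\tau(T,w,f)$ for a general extended graphon, then obtain compactness of the family $\{\tau(T,w_N,f_N)\}_{T\in\Tree}$, and finally identify the limits as observables of a single pair $(w,f)$.

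\textbf{Well-posedness of $\tau$.} Instead of the symmetric integral over $[0,1]^{|T|}$, I will define $\tau$ recursively from the leaves to the root. For a vertex $k$ with children $l_1,\dots,l_r$ and subtree $T_k$, set
\[
g_k(x_{T_k},\xi_k):=f(x_k,\xi_k)\prod_{s=1}^r\int_0^1 g_{l_s}(x_{T_{l_s}},\zeta)\,w(\xi_k,d\zeta),\qquad \tau(T,w,f):=\int_0^1 g_1(x_1,\dots,x_{|T|},\xi_1)\,d\xi_1 .
\]
Each inner integral is an application of the Banach-valued part of Lemma \ref{impoest}, with $B_x^*=(L^1\cap L^\infty)(\R^{d|T_{l_s}|})$. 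By induction on $|T|$ this gives
\[
\|\tau(T,w,f)\|_{L^1\cap L^\infty}\le \|w\|^{|T|-1}\,\|f\|^{|T|}_{L^\infty_\xi(L^1\cap L^\infty)}.
\]
When $w\in L^\infty_\xi L^1_\zeta\cap L^\infty_\zeta L^1_\xi$, Fubini shows that this coincides with the original formula. Uniqueness of the extension follows from the weak$^*$-continuity statement of Lemma \ref{impoest}: smooth kernels are weak$^*$ dense, and the recursion is continuous under such approximation.

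\textbf{Compactness in $x$.} Under assumptions 1--2, the same induction bounds $\tau(T,w_N,f_N)$ uniformly in $W^{1,1}\cap W^{1,\infty}(\R^{d|T|})$. The derivative in $x_m$ only hits the factor $f(x_m,\xi_m)$, and that factor is controlled in $L^\infty_\xi W^{1,p}$. Rellich then gives precompactness in $L^p_{loc}$ for each $p<\infty$. The set $\Tree$ is countable, so a diagonal extraction yields a single subsequence along which $\tau(T,w_N,f_N)\to\tau_T$ in $L^p_{loc}$ for every $T$. The same argument, via Banach--Alaoglu using the separable predual $L^1_\xi(C_\zeta)$, also gives $w_N\overset{*}{\rightharpoonup}\tilde w$.

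\textbf{Identification of the limit (main obstacle).} What remains is to show that $\tau_T=\tau(T,w,f)$ for one pair $(w,f)$. The naive choice of weak limits fails because $f_N$ converges only weakly in $\xi$, so products $\prod_m f_N(x_m,\xi_m)$ tied together through $w_N$ do not pass to the limit. My first attempt would be to make $f_N$ strongly convergent in $\xi$ by relabeling, consistent with the Remark after Theorem \ref{maintheorem}:
\begin{itemize}
\item Since the values $f_N(\cdot,\xi)$ lie in a fixed set $\mathcal K$ that is compact in $L^1_{loc}$, I regard the law of $\xi\mapsto f_N(\cdot,\xi)$ as a Young measure on $[0,1]\times\mathcal K$.
\item I pass to the limit in the joint objects $(f_N,w_N)$ lifted to the enlarged label space $[0,1]\times\mathcal K$.
\item I then use a measure-preserving, a.e.\ injective map $\Phi^N$ to pull back to $[0,1]$, so that $f_N(\cdot,\Phi^N(\xi))\to f$ strongly in $L^1_\xi L^1_{loc}$.
\end{itemize}
With strong convergence of $f_N\circ\Phi^N$ and weak$^*$ convergence of $w_N(\Phi^N\cdot,\Phi^N\cdot)$, the last clause of Lemma \ref{impoest} propagates the convergence through the recursion from the leaves upward. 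The observables $\tau$ are invariant under simultaneous measure-preserving relabeling of all $\xi$-variables, so this identifies $\tau_T=\tau(T,w,f)$. Constructing $\Phi^N$ so that both convergences hold at once is the delicate step, and it is where I expect most of the work.
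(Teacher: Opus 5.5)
Your uniform $W^{1,1}\cap W^{1,\infty}$ bounds and the compactness in $x$ are fine. Your overall architecture also agrees with what the paper indicates in the Remark after Theorem~\ref{maintheorem}: a recursion through Lemma~\ref{impoest}, measure-preserving relabelings $\Phi^N$, and $w$ taken as the weak$^*$ limit of $w_N(\Phi^N\cdot,\Phi^N\cdot)$. The paper itself only cites \cite{JPS2025}.

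The gap is in what you require of $\Phi^N$. Strong convergence of $f_N\circ\Phi^N$ together with weak$^*$ convergence of $w_N(\Phi^N\cdot,\Phi^N\cdot)$ does not propagate through the recursion. The last clause of Lemma~\ref{impoest} needs a strongly convergent input, which you have at a leaf. Its output $\int_0^1 g_{l,N}(\cdot,\zeta)\,w_N(\xi,d\zeta)$, however, converges only weakly$^*$ in $\xi$, and one level up you must integrate it against the weakly$^*$ convergent $w_N$ again.

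Here is a counterexample. Take $f_N(x,\xi)=\phi(x)\neq 0$, which is strongly convergent under every relabeling. Take $w_N(\xi,\zeta)=1+\tfrac12\sin(2\pi N\xi)+\tfrac12\sin(2\pi N\zeta)$, which satisfies assumption 1 and converges weakly$^*$ to $1$. For the path $T$ with edges $(1,2),(2,3)$, both inner integrals equal $1+\tfrac12\sin(2\pi N\xi_2)$, so
\[
\tau(T,w_N,f_N)=\phi^{\otimes 3}\int_0^1\Bigl(1+\tfrac12\sin(2\pi N\xi)\Bigr)^2\,d\xi=\tfrac98\,\phi^{\otimes 3}\neq\phi^{\otimes 3}=\tau(T,1,\phi).
\]
So with $\Phi^N=\mathrm{id}$, which meets both of your criteria, your procedure returns the wrong pair $(w,f)$. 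Lifting to $[0,1]\times\mathcal K$ does not help, because the $\mathcal K$-coordinate is constant here.

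The same example shows that $\tau$ is not continuous under weak$^*$ approximation of $w$, so your uniqueness argument for the extension fails too. Uniqueness should come instead from the uniqueness of the bounded extension in Lemma~\ref{impoest} with $w$ fixed. That extension is continuous in $\phi$ for the $L^1_\zeta$ topology on bounded sets, where $C_\zeta$ is dense.

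The missing idea is that the relabeling must make every partial tree observable converge strongly, not only $f_N$. In the example, this is the degree $\xi\mapsto 1+\tfrac12\sin(2\pi N\xi)$; rearranging it monotonically yields the correct limit.

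Concretely, for each rooted tree $S$ let $g_{S,N}$ be as in your recursion, and set $h_{S,N}(x_S,\xi):=\int_0^1 g_{S,N}(x_S,\zeta)\,w_N(\xi,d\zeta)$. Your own induction puts each $h_{S,N}(\cdot,\xi)$ in a fixed bounded set of $(W^{1,1}\cap W^{1,\infty})(\R^{d|S|})$, hence in a compact subset of $L^1_{loc}$. So the type map $\xi\mapsto\bigl(f_N(\cdot,\xi),(h_{S,N}(\cdot,\xi))_{S\in\Tree}\bigr)$ takes values in a compact metrizable countable product. Your Young-measure/rearrangement step should be applied to this vector-valued map. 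Weak$^*$ convergence of $w_N(\Phi^N\cdot,\Phi^N\cdot)$ then comes for free from Banach--Alaoglu; that is not the delicate point.

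The induction then closes as follows, with composition in $\xi$ throughout.
\begin{enumerate}
\item Suppose $g_{S,N}\circ\Phi^N\to G_S$ strongly. Lemma~\ref{impoest} gives $h_{S,N}\circ\Phi^N\overset{*}{\rightharpoonup}\int_0^1 G_S(\cdot,\zeta)\,w(\cdot,d\zeta)$.
\item At the same time, $h_{S,N}\circ\Phi^N$ converges strongly as a coordinate of the type map, so the two limits coincide.
\item Hence the parent's $g\circ\Phi^N$ converges strongly to the value of the recursion for $(w,f)$.
\item Finally, $\tau(T,w_N,f_N)=\int_0^1 g_{T,N}(\cdot,\Phi^N(\xi))\,d\xi\to\tau(T,w,f)$ in $L^p_{loc}$.
\end{enumerate}
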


Having properly defined $\tau$ we finish this section with the following lemma.

\begin{lemma}\label{treeb}\cite[Corollary 4.16]{JPS2025}
If $w\in L^\infty_\zeta\mathcal{M}_\xi \cap L^\infty_\xi\mathcal{M}_\zeta$, $f\in L^\infty((0,1), (L^1\cap L^{\infty})(\R^d
)$ and $T\in \Tree$, then $\tau(T,w,f)\in (L^1\cap L^{\infty})(\R^{d |T|})$ with the estimates
$$\Vert \tau(T,w,f)\Vert_{L^1(\R^{d|T|})}\leq \Vert w\Vert^{|T|-1}\Vert f\Vert_{L^\infty((0,1), L^1(\R^d
))}^{|T|}, \hd \Vert \tau(T,w,f)\Vert_{L^\infty(\R^{d|T|})}\leq \Vert w\Vert^{|T|-1}\Vert f\Vert_{L^\infty((0,1), L^\infty(\R^d
))}^{|T|}.$$
Furthermore, if $f$ belongs additionaly to $L^\infty((0,1); W^{k,p}(\R^d))$ for some $k\geq 1$ and $p\in [1,\infty]$, then $\tau(T,w,f)\in W^{k,p}(\R^{d|T|})$ and
$$\Vert \tau(T,w,f)\Vert_{W^{k,p}(\R^{d|T|})}\leq \Vert w\Vert^{|T|-1}\,\Vert f\Vert_{L^\infty((0,1); W^{k,p}(\R^d))}^{\vert T\vert}.$$
\end{lemma}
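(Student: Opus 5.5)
The natural route is to prove the estimates first for regular kernels and then pass to general extended graphons through the recursive (leaf-to-root) structure of $\tau$. For this, Lemma \ref{impoest} supplies exactly the right bound at each step.

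\emph{Regular case.} First assume $w\in L^\infty_\xi L^1_\zeta\cap L^\infty_\zeta L^1_\xi$, so that $\tau(T,w,f)$ is a genuine iterated integral. For the $L^1$ bound, put absolute values inside and integrate in $x_1,\dots,x_{|T|}$ first by Tonelli. Each factor $f(x_m,\xi_m)$ contributes at most $\|f\|_{L^\infty((0,1),L^1(\R^d))}$. What remains is $\int_{[0,1]^{|T|}}\prod_{(k,l)\in E(T)}|w(\xi_k,\xi_l)|\,d\xi$. This I estimate by induction on $|T|$, using the recursive description $\Tree_{n+1}=\{T+i\}$. The newest vertex $n+1$ is a leaf attached to vertex $i$, so integrating $\xi_{n+1}$ first gives a factor $\sup_{\xi_i}\int_0^1|w(\xi_i,\zeta)|\,d\zeta\le\|w\|$. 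After $|T|-1$ steps only the root integral over $[0,1]$ remains, which equals $1$. This yields $\|w\|^{|T|-1}\|f\|^{|T|}$.

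The $L^\infty$ bound is the same argument with the pointwise bound $|f(x_m,\xi_m)|\le\|f\|_{L^\infty L^\infty}$ in place of the $x$-integration. For $L^p$ with $1\le p<\infty$, I use Minkowski's integral inequality. The integrand $\prod_m g_m(x_m,\xi_m)$ is, for fixed $\xi$, a tensor product in the variables $x_1,\dots,x_{|T|}$, so its $L^p(\R^{d|T|})$ norm factorizes as $\prod_m\|g_m(\cdot,\xi_m)\|_{L^p(\R^d)}$. The $\xi$-integration is then handled by the leaf-stripping induction as before.

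\emph{Sobolev case.} A derivative $\partial^\alpha_{x}$ with $|\alpha|\le k$ splits as $\alpha=(\alpha_1,\dots,\alpha_{|T|})$, and $\alpha_m$ acts only on the factor $f(x_m,\xi_m)$. Hence $\partial^\alpha\tau(T,w,f)$ is the multilinear analogue of $\tau$ with $\partial^{\alpha_m}f$ placed at vertex $m$. Since $|\alpha_m|\le k$, the $L^p$ estimate above, applied with $g_m=\partial^{\alpha_m}f$, bounds it by $\|w\|^{|T|-1}\prod_m\|f\|_{L^\infty W^{k,p}}$. Summing over $\alpha$ gives the $W^{k,p}$ bound. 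Depending on how the $W^{k,p}$ norm is normalized, a combinatorial constant may appear; I would either absorb it by choosing the norm appropriately or check that the paper's convention yields exactly the stated constant.

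\emph{Extension to extended graphons.} This is the step I expect to be the main obstacle, because $w(\xi,d\zeta)$ is only a measure and the product $\prod w(\xi_k,\xi_l)$ has no direct meaning. I would define $\tau$ recursively. For a vertex $v$, set
\[
h_v(\xi_v,x_{S(v)}) := f(x_v,\xi_v)\prod_{c \text{ child of } v}\int_0^1 h_c(\zeta,x_{S(c)})\,w(\xi_v,d\zeta),
\]
where $S(v)$ denotes the subtree rooted at $v$; then $\tau=\int_0^1 h_1(\xi,\cdot)\,d\xi$. Each inner integral is the vector-valued operator of Lemma \ref{impoest} with $B^*_x=L^p(\R^{d|S(c)|})$, $1<p\le\infty$; these are dual spaces, and the case $p=1$ can be treated as the limit $p\to1$ or inside the space of measures. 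Lemma \ref{impoest} gives $\|\int h_c\,w\|_{L^\infty_\xi B^*}\le\|w\|\,\|h_c\|_{L^\infty_\zeta B^*}$, and the tensor-product structure multiplies the norms. Induction from the leaves to the root reproduces exactly the constants $\|w\|^{|T|-1}\|f\|^{|T|}$.

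The remaining technical check is that this recursive object agrees with the extension from Theorem \ref{lg}. I would verify this by approximating $w$ weak$^*$ by regular kernels and using the weak$^*$ continuity statement in Lemma \ref{impoest}. Lower semicontinuity of the norms under this convergence then transfers the bounds to the limit.
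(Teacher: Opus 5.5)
The paper gives no proof of its own here; it imports the statement from \cite[Corollary 4.16]{JPS2025}. Your leaf-to-root recursion through the vector-valued operator of Lemma \ref{impoest}, with the norms of the tensor factors multiplying at fixed $\xi$, is the mechanism behind that corollary. It yields $\norm{w}^{|T|-1}\norm{f}^{|T|}$ directly, with no limiting procedure.

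For the Sobolev part, run the same recursion with $B^*_x=W^{k,p}$ instead of summing per-multi-index bounds after taking $\sup_\xi$ factor by factor. The latter can lose a combinatorial factor, since $\sum_\alpha\sup_\xi\neq\sup_\xi\sum_\alpha$. With the norm $(\sum_{|\alpha|\le k}\norm{\partial^\alpha u}_{L^p}^p)^{1/p}$ (maximum for $p=\infty$) one has $\norm{u\otimes v}_{W^{k,p}}\le\norm{u}_{W^{k,p}}\norm{v}_{W^{k,p}}$ at fixed $\xi$. This gives exactly the stated constant.

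The step that would fail is your last one: identifying the recursive object by weak$^*$ approximation of $w$. The weak$^*$ continuity in Lemma \ref{impoest} needs \emph{strong} $L^1$ convergence of the integrand and returns only weak$^*$ convergence, so it cannot be iterated. At a vertex with two children you would need the product of two weak$^*$-convergent sequences, and along a path the next level needs strong input. In fact tree observables are not weak$^*$ continuous in $w$. Take $w_n(\xi,\zeta)=1+\sin(2\pi n\xi)$; then $w_n\overset{*}{\rightharpoonup}1$ in $L^\infty_\xi\mathcal{M}_\zeta\cap L^\infty_\zeta\mathcal{M}_\xi$ with bounded norms. Yet for the tree with edges $(1,2),(1,3)$ and $f(x,\xi)=g(x)$ you get $\tau(T,w_n,f)=\tfrac32\,g\otimes g\otimes g$ for every $n$, while $\tau(T,1,f)=g\otimes g\otimes g$. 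So neither convergence nor lower semicontinuity is available along such approximations.

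The step is, however, unnecessary. Take the recursion itself as the definition of $\tau$ for extended graphons. When $w$ has an $L^1$ density, the extended operator of Lemma \ref{impoest} is the ordinary integral, so by Fubini the recursion reproduces the iterated integral of the formal definition. The bounds then come straight out of your induction. If you insist on an approximation argument, it must converge strongly level by level. For instance, regularize $w(\xi,\cdot)$ in the child variable only; then the $L^1$ estimate of Lemma \ref{impoest} propagates strong convergence up the tree.
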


\end{subsection}

\begin{subsection}{The analysis of the limiting equation (\ref{meana}).}

In this chapter we follow the path marked in Chapter 4 in \cite{JPS2025}. At first we establish the solvability of the limiting equation. In order to apply in subsequent results the regularizing approximation we add the dissipative term with $\nu \geq 0$. Thus, we discuss the following problem
\eqq{
\partial_t f(t,x,\xi) + \divv ( f(t,x,\xi) V_1[f](t,x,\xi)) =  f(t,x,\xi) \left[A-V_2[f](t,x,\xi)\right] + \nu \Delta_x f(t,x,\xi),
}{meannu}
where for $i=1,2$
\[
V_i[f](t,x,\xi) = \izj \ird K_{i}(x-y)f(t,y,\zeta)dy w(\xi,d\zeta).
\]
Similarly to \cite[Definition 4.10]{JPS2025} we define a weak solution to (\ref{meannu}) as follows. We say that $f$ is a weak solution to \eqref{meannu} with $w \in L^{\infty}_\xi\mathcal{M}_\zeta \cap L^{\infty}_\zeta\mathcal{M}_\xi$,  $K_i\in L^\infty(\R^d)$ for $i=1, 2$ and any $\nu\geq 0$ if $f\in L^{\infty}((0,t_{*})\times (0,1);L^{1}(\R^d))$ and
\[
\int_0^{t_*}\int_{\mathbb{R}^d}\partial_t \varphi(t,x)\,f(t,x,\xi)\,dxdt +\int_0^{t_*}\int_{\mathbb{R}^d}\nabla_x\varphi(t,x)\cdot V_{1}[f](t,x,\xi)\,f(t,x,\xi)\,dxdt
\]
\[
+\nu\int_0^{t_*}\int_{\mathbb{R}^d}\Delta_x\varphi(t,x)\,f(t,x,\xi)\,dxdt
+\int_0^{t_*}\int_{\mathbb{R}^d}\varphi(t,x) [A-V_{2}[f](t,x,\xi)]\,f(t,x,\xi)\,dxdt =0, \hd a.e \m{ on }  (0,1),
\]
for any $\varphi\in W^{1,\infty}((0,\ t_*);C_c^2(\mathbb{R}^d)) \cap C_{c}((0,\ t_*);C_c^2(\mathbb{R}^d))$.
Analogously to \cite[Proposition 4.11]{JPS2025} we establish the existence of weak solutions.  
\begin{prop}
Let as assume $K_i\in L^1(\R^d)\cap L^\infty(\R^d)$ for $i=1,2$, $\divv K_1\in L^1(\R^d)$, $w\in L^\infty_\xi \mathcal{M}_\zeta\cap L^\infty_\zeta \mathcal{M}_\xi$, $w \geq 0, K_2 \geq 0$ and  $f^0\in L^\infty((0, 1); W^{1,1}\cap W^{1,\infty}(\R^d))$ with $f^0 \geq 0$. Then for any $\nu\geq 0$, there exists nonnegative $f\in L^\infty((0, t_*)\times (0, 1); W^{1,1}\cap W^{1,\infty}(\R^d))$ such that  $f(t,\cdot,\cdot)\Big|_{t=0} = f^0$ in $L^\infty((0,\ 1);\ W^{1,1}\cap W^{1,\infty}(\R^d))$ and that $(w,f)$ is a weak solution to \eqref{meannu}.\label{existenceprop}
  \end{prop}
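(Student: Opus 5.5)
We follow the strategy of \cite[Proposition 4.11]{JPS2025}: regularize the graphon, solve the regularized problem by a fixed point, obtain a priori bounds uniform in the regularization, and pass to the limit using the weak$^*$ continuity in Lemma \ref{impoest}. First we approximate $w$ by nonnegative kernels $w_k\in L^\infty([0,1]^2)$ (e.g. by mollification in $(\xi,\zeta)$ or by block averages), with $\norm{w_k}\leq\norm{w}$ and $w_k\overset{*}{\rightharpoonup}w$ in $L^\infty_\xi\mathcal{M}_\zeta\cap L^\infty_\zeta\mathcal{M}_\xi$. If $\nu=0$ we also approximate by $\nu_k\downarrow 0$, so every approximate problem is parabolic, and the case $\nu=0$ is recovered in the limit.

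For fixed $k$ we use a Banach fixed point in $X_T:=L^\infty((0,T)\times(0,1);L^1(\R^d))$. Given $g\in X_T$ with $\norm{g}\leq R$, the fields $V_i[g]$ satisfy $\norm{V_i[g]}_{L^\infty}\leq\norm{K_i}_{L^\infty}\norm{w}R$ and $\norm{\divv V_1[g]}_{L^\infty}\leq\norm{\divv K_1}_{L^1}\norm{w}\sup_{t,\xi}\norm{g}_{L^\infty}$. To control $\norm{g}_{L^\infty}$ we work in $L^\infty_{t,\xi}(L^1\cap L^\infty)$. For each $\xi$ we then solve the \emph{linear} equation $\partial_t f+\divv(fV_1[g])=f(A-V_2[g])+\nu\Delta f$ with $f(0)=f^0(\cdot,\xi)$, using Duhamel's formula with the heat kernel, or characteristics if $\nu=0$. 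Nonnegativity of $f$ follows from the maximum principle, since the equation is linear in $f$ with bounded zero-order coefficient. Testing with $1$ and using $w,K_2,f\geq 0$ gives $\frac{d}{dt}\ird f\leq A\ird f$, so $\norm{f(t)}_{L^1}\leq e^{At}\norm{f^0}_{L^1}$. Differences of two such $f$'s are controlled in $L^1$ by $\norm{g_1-g_2}$, using \eqref{stabv}-type bounds and Lemma \ref{impoest}. Hence the map is a contraction for small $T$, and the $L^1$ bound lets us iterate up to $t_*$.

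The key step is a priori estimates in $W^{1,1}\cap W^{1,\infty}$ that are uniform in $k$ and $\nu$. Differentiating in $x$ gives
\[
\partial_t\nabla f+\divv(\nabla f\otimes V_1)+\divv(f\nabla V_1)=\nabla f(A-V_2)-f\nabla V_2+\nu\Delta\nabla f.
\]
The gradients of the fields are bounded by $\norm{\nabla V_i}_{L^\infty}\leq\norm{w}\norm{K_i}_{L^\infty}\norm{\nabla f}_{L^\infty_{\xi}L^1}$ when the derivative is placed on $f$, or by $\norm{w}\norm{K_i}_{L^1}\norm{\nabla f}_{L^\infty_\xi L^\infty}$. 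The second derivatives satisfy $\norm{\nabla\divv V_1}_{L^\infty}\leq\norm{w}\norm{\divv K_1}_{L^1}\norm{\nabla f}_{L^\infty L^\infty}$. With these bounds, $L^1$ and $L^\infty$ estimates of $\nabla f$ close in a Gronwall loop for $\norm{f}_{L^\infty_\xi(W^{1,1}\cap W^{1,\infty})}$. The loop is linear in the $W^{1,\cdot}$ norm, with coefficients depending on the already controlled $\norm{f}_{L^1\cap L^\infty}$, which is bounded via $e^{(A+\norm{\divv V_1}_\infty)t}$. So it closes on all of $(0,t_*)$, possibly after a continuation argument. We expect this step to be the main obstacle: the reaction term enters the $L^\infty$ bound only through $A$, thanks to $K_2\geq0$, but the quadratic structure $f\nabla V_2$ has to be checked carefully so that no superlinear growth appears. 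If it does, we would accept a short existence time and iterate on the $L^1$ bound.

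Finally we pass to the limit $k\to\infty$. The uniform bounds give weak$^*$ compactness of $f_k$ in $L^\infty_{t,\xi}(W^{1,1}\cap W^{1,\infty})$. Moreover, $\partial_t f_k$ is bounded in a negative Sobolev space in $x$, uniformly in $\xi$. We do not have compactness in $\xi$. Instead, the $x$-convolution $\ird K_i(x-y)f_k(t,y,\zeta)dy$ converges strongly, locally in $(t,x)$ for each $\zeta$, by Aubin--Lions in $x,t$. Combined with the last statement of Lemma \ref{impoest} (strong $L^1_\zeta$ convergence of the integrand and weak$^*$ convergence of $w_k$), this yields $V_i[f_k]\overset{*}{\rightharpoonup}V_i[f]$. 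To pass to the limit in the nonlinear products $f_kV_i[f_k]$, we test against $\varphi(t,x)\chi(\xi)$ and use the same strong-in-$x$ / weak-in-$\xi$ pairing, arguing as in \cite[Proposition 4.11]{JPS2025}. Nonnegativity, the initial datum, and the bounds are preserved in the weak$^*$ limit.
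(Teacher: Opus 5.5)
\textbf{The gap: the limit $k\to\infty$ (and equally $\nu_k\downarrow 0$) needs compactness in $\xi$.} You correctly note that there is no compactness in $\xi$, but the argument then uses exactly that. The last statement of Lemma \ref{impoest} requires $\phi_k(\zeta)=\ird K_i(x-y)f_k(t,y,\zeta)\,dy$ to converge strongly in $L^1_\zeta$. Aubin--Lions in $(t,x)$ at a fixed $\zeta$ does not give this. It only yields a $\zeta$-dependent subsequence, there is no common subsequence for a.e.\ $\zeta$, and there is no reason for such a limit to be the weak-$*$ limit $f(\cdot,\zeta)$. Your uniform bounds do not exclude oscillations such as $f_k=\rho(x)\bigl(1+\tfrac12\sin(2\pi k\zeta)\bigr)$, for which $\phi_k$ does not converge in $L^1_\zeta$.

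Even $V_i[f_k]\overset{*}{\rightharpoonup}V_i[f]$ would not be enough. The term $f_kV_i[f_k]$ is a product of two sequences that converge only weakly in $\xi$. For a singular extended graphon such as $w(\xi,d\zeta)=\delta_\xi(d\zeta)$, the operator $\phi\mapsto\izj\phi(\zeta)w(\cdot,d\zeta)$ is not compact, so $V_i[f_k]$ inherits the $\xi$-oscillations of $f_k$. In the example above, $V_i[f_k]\overset{*}{\rightharpoonup}V_i[f]$ does hold, but $f_kV_i[f_k]\overset{*}{\rightharpoonup}\tfrac98\,fV_i[f]$. This is precisely the obstruction that \cite{JPS2025} handle with the tree observables $\tau(T,w,f)$. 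It cannot be settled by ``arguing as in Proposition 4.11'', which is a Banach fixed-point argument, not a compactness one.

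\textbf{How the paper avoids it.} The paper never regularizes $w$, because Lemma \ref{impoest} already gives \eqref{v1}--\eqref{v4} for the extended graphon itself. It defines $Pg$ as the solution of the linear problem \eqref{independ2-linear}, using characteristics for $\nu=0$ and the mild formulation for $\nu>0$. Using $L^1$, $L^\infty$ and gradient bounds, it shows that $P$ maps $E_\Upsilon$ into itself for small $t_*$. It then proves that $P$ is a contraction in the metric of $L^1((0,t_*)\times(0,1)\times\R^d)$. The delicate term $\divv(Pg_1\,V_1[g_1-g_2])$ is controlled by the $W^{1,\infty}$ bound on $Pg_1$ built into $E_\Upsilon$, together with $\norm{w}(\norm{K_1}_{L^1}+\norm{\divv K_1}_{L^1})\norm{g_1-g_2}_{L^1}$. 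Since the solution is obtained as a limit in a norm that integrates over $\xi$, no compactness in $\xi$ is ever needed. The natural repair is to run your fixed point directly with $w$ in this space and metric, handling $\nu=0$ by characteristics rather than vanishing viscosity.

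\textbf{A secondary point: the gradient loop is not linear.} With only $\divv K_1\in L^1$, $\norm{\nabla V_1[f]}_{L^\infty}$ is controlled only by gradient norms of $f$, and $\norm{\divv V_1[f]}_{L^\infty}$ by $\norm{f}_{L^\infty}$. The estimates are therefore of Riccati type. A continuation cannot be driven by the $L^1$ bound, because the length of each local step depends on the $W^{1,1}\cap W^{1,\infty}$ norm. The paper's fixed point also closes only on a short interval before invoking a ``standard extension argument'', so here you are no worse off than the paper. A genuinely global bound needs an extra assumption such as $\divv K_1\in L^\infty$, as in Lemma \ref{linftyestf}. Without it global existence can fail: finite-time aggregation for $K_1=-\nabla|x|$ cut off at infinity, with $d\ge 2$ and $w\equiv 1$, fits the stated hypotheses.
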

The proof of Proposition \ref{existenceprop} is the adaptation of the argumentation from  \cite[Proposition 4.11]{JPS2025} to the non-conservative problem. For the sake of completeness we provide it in the Appendix.
Let us now prove that if $(w,f)$ is a weak solution to (\ref{meannu}), then the family $\tau(T,w,f)$ solves, so called, Vlasov hierarchy. 

\begin{prop}\label{hier}
Let us assume that $K_1, K_2\in L^\infty(\R^d)$, $\nu\geq 0$ and $w\in L^\infty_\xi \mathcal{M}_\zeta \cap L^\infty_\zeta \mathcal{M}_\xi$. Let $f\in L^\infty((0,\ t_*)\times (0,1);\, L^1(\R^d))$ be a weak solution to \eqref{meannu}. Then, $\tau(T,w,f)$ solves the following  non-exchangeable Vlasov hierarchy
\[
\partial_t \tau(T,w,f)+\sum_{i=1}^{|T|} \divv_{x_i}\left(\int_{\R^d} K_1(x_i-y)\,\tau(T+i,w,f)(t,x_1,\ldots,x_{|T|},y) dy\right)
\]
\eqq{
=A|T|\tau(T,w,f) - \sum_{i=1}^{|T|}\ird K_2(x_i-y)\tau(T+i,w,f)(t,x_1,\ldots,x_{|T|},y)dy +\nu\sum_{i=1}^{|T|}\Delta_{x_i}\tau(T,w,f).
}{treeeq}
for any tree $T\in \Tree$ in the sense of distributions.
\end{prop}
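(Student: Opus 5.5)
The approach is to follow the formal computation behind \cite[Proposition 4.12]{JPS2025}: differentiate the product defining $\tau(T,w,f)$ in time, using the Leibniz rule over the vertices of $T$. First assume $w\in L^\infty_\xi L^1_\zeta\cap L^\infty_\zeta L^1_\xi$ (a genuine function), so that $\tau$ is an honest absolutely convergent integral, and derive (\ref{treeeq}) there. The general extended graphon case will then be recovered by density together with the continuity of $\tau$ given by Lemma \ref{impoest} and the construction behind Theorem \ref{lg}.

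For the function case, fix a test function of product form $\Phi(t,x_1,\dots,x_{|T|})=\prod_{m}\varphi_m(t,x_m)$; linear combinations of these are dense in $C^2_c$, so this suffices. Write
\[
\int \Phi\,\tau(T,w,f)\,dx=\int_{[0,1]^{|T|}}\prod_{(k,l)\in E(T)}w(\xi_k,\xi_l)\prod_{m}\Big(\ird\varphi_m(t,x_m)f(t,x_m,\xi_m)dx_m\Big)d\xi.
\]
By the weak formulation of (\ref{meannu}), each factor $F_m(t,\xi_m)=\ird\varphi_m f\,dx_m$ is absolutely continuous in $t$ for a.e. $\xi_m$. Its derivative is
\[
\ird\big(\partial_t\varphi_m+\nabla\varphi_m\cdot V_1[f]+\nu\Delta\varphi_m+\varphi_m(A-V_2[f])\big)f\,dx_m.
\]
Differentiating the product gives a sum over $i\in V(T)$ in which only the $i$-th factor is differentiated. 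The $A$ term contributes $A|T|\tau$ and the $\nu$ term gives $\nu\sum_i\Delta_{x_i}\tau$ after integrating by parts. For the transport and reaction terms, insert $V_j[f](t,x_i,\xi_i)=\int\ird K_j(x_i-y)f(t,y,\zeta)dy\,w(\xi_i,d\zeta)$. The new variable $\zeta$ acts as an additional vertex $|T|+1$ attached to $i$, carrying the weight $w(\xi_i,\xi_{|T|+1})$ and the factor $f(t,y,\xi_{|T|+1})$. This turns the integrand into $\tau(T+i,w,f)(t,x_1,\dots,x_{|T|},y)$, which is exactly the right-hand side of (\ref{treeeq}) with the divergence in weak form. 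Fubini is justified by $f\in L^\infty_{t,\xi}L^1_x$, $K_j\in L^\infty$ and the $L^\infty_\xi L^1_\zeta$ bounds on $w$, via Lemma \ref{treeb}.

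The main obstacle is the passage to a general $w\in L^\infty_\xi\mathcal{M}_\zeta\cap L^\infty_\zeta\mathcal{M}_\xi$. There one cannot approximate $w$ while keeping $f$ fixed, because $f$ solves the equation with the given $w$. The plan is therefore to avoid approximation altogether and perform the same computation directly within the extended-graphon integration scheme. I would define $\tau$ recursively, integrating leaves first: for a leaf $l$ attached to $k$, replace $w(\xi_k,\xi_l)f(x_l,\xi_l)\,d\xi_l$ by the map $\int_0^1 f(x_l,\zeta)w(\xi_k,d\zeta)\in L^\infty_{\xi_k}(L^1_{x_l})$, which is well defined by Lemma \ref{impoest} with $B^*_x=$ measures or $L^1$. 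Applying this repeatedly, $\int\Phi\tau$ becomes an iterated pairing of $L^\infty_\xi$-valued functions. The Leibniz computation then goes through at the level of these pairings. The points needing care are the time derivative of an iterated pairing and the identification $V_j[f](x_i,\xi_i)\,f(x_i,\xi_i)\,\cdot$ [rest of the tree] $=\int K_j(x_i-y)\tau(T+i)$. Both follow from bilinearity and the boundedness of the extended operator in Lemma \ref{impoest}, applied with $\phi(\zeta)=\ird K_j(x-y)f(t,y,\zeta)dy\in L^\infty_\zeta(L^\infty_x)$. Uniqueness of the extension in Theorem \ref{lg} ensures that this recursive object coincides with $\tau$.
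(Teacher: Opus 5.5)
Your proposal is correct. Its core computation is the paper's: the Leibniz rule over the vertices of $T$, the reaction term producing $A|T|\tau$, and reading the $\zeta$-integration inside $V_j[f](t,x_i,\xi_i)$ as a new leaf $|T|+1$ attached to vertex $i$, which yields $\tau(T+i,w,f)$. The difference lies in how the time derivative is justified.

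The paper mollifies in space, $f^\varepsilon=f*\eta_\varepsilon$, so that \eqref{aprof} gives $\partial_t f^\varepsilon\in L^\infty((0,t_*)\times(0,1);C^\infty(\R^d))$. It then differentiates $\tau(T,w,f^\varepsilon)$ directly, tests against $\Phi(t)\prod_i\varphi^i(x_i)$, lets $\varepsilon\to0$, and concludes by density.

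You instead use the product structure of the test function from the start. Only the scalar quantities $F_m(t,\xi)=\int\varphi_m f\,dx$ must be differentiated. Their Lipschitz continuity in $t$ (for a.e.\ $\xi$, along a countable dense family of test functions) comes straight from the weak formulation, because $V_1[f]$ and $V_2[f]$ are bounded. So no mollification and no $\varepsilon$-limit are needed, and your route is somewhat leaner.

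You are also more explicit than the paper about the extended graphon. The paper manipulates $\prod_{(k,l)\in E(T)}w(\xi_k,\xi_l)$ as if $w$ were a function. Your leaves-first recursion via Lemma \ref{impoest} makes this precise, given two commutation facts that the paper uses implicitly:
\begin{itemize}
\item the extended operator commutes with Bochner integration in $t$, since it is bounded $L^1_\zeta\to L^1_\xi$;
\item it commutes with convolution by $K_j$, which is a bounded map $L^1(\R^d)\to L^\infty(\R^d)$.
\end{itemize}

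The only fix is internal to your write-up. Your first paragraph says the general case follows by density in $w$. As you correctly observe later, this fails because $f$ itself depends on $w$, so that sentence should be dropped in favour of the direct computation.
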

\begin{proof}
On the formal level the proof is a straightforward extension of \cite[Proposition 4.17]{JPS2025}. In order to provide a rigorous argument we firstly note that the weak solution to (\ref{meannu}) is also a distributional solution. Let us convolve in space the distributional time derivative of $f$ with a standard mollifying kernel $\eta_{\ve} \in C^{\infty}_c(\R^d)$ and denote $f^\ve:=f*\eta_\ve$. Then, $f^\ve$ satisfy 
\eqq{
\partial_t f^\ve = -\divv (\eta_{\ve}*(fV_1[f])) + \nu \Delta f^{\ve} +\eta_{\ve}*(f(A-V_2[f]))
}{aprof}
in the sense of distributions and thus $\partial_t f^\ve \in L^{\infty}((0,t_{*})\times (0,1);C^{\infty}(\R^d))$.
Moreover, applying Lemma~\ref{treeb} we obtain $\tau(T,w,f)\in L^\infty((0, t_*);L^1(\R^{d |T|}))$. 
Fix $T > 0$ and take arbitrary $\Phi \in C^{1}_{c}((0,t_{*}))$ and a sequence $(\phi^{i})_{i=1}^{|T|} \in C^{2}_{c}(\R^d)$. Then, as $\ve \rightarrow 0$
\begin{align*}
    \int_0^{t_*}\int_{\R^{d|T|}}\tau(T,w,f^\ve)(t,x_{1},\dots, x_{|T|})\Phi'(t)&\prod_{i=1}^{|T|}\vf^i(x_i) dx_{1}\dots dx_{|T|}dt \longrightarrow \\
    &\int_0^{t_*}\int_{\R^{d|T|}}\tau(T,w,f)(t,x_{1},\dots, x_{|T|})\Phi'(t)\prod_{i=1}^{|T|}\vf^i(x_i) dx_{1}\dots dx_{|T|}dt,
\end{align*}
where we applied the properties of mollification, i.e. $f^\ve \rightarrow f$ in $L^{1}(\R^d)$ a.e. on $(0,t_{*})\times (0,1)$ and $\norm{f^{\ve}}_{L^{\infty}((0,t_{*})\times(0,1);L^1(\R^d))} \leq \norm{f}_{L^{\infty}((0,t_{*})\times (0,1);L^1(\R^d))}$. 
Since, $f^\ve$ is Lipschitz in time we may calculate for almost all $t$
\[
\partial_t\tau(T,w,f^\ve)(t,x_1,\dots x_{|T|})=\sum_{n=1}^{|T|}\int_{[0,\ 1]^{|T|}} \prod_{(k,l)\in E(T)} w(\xi_k,\xi_l)\,\partial_t f^\ve (t,x_n,\xi_n)\,\prod_{m\neq n} f^{\ve}(t,x_m,\xi_m)\,d\xi_1\dots d\xi_{|T|}.
 \]
Using \eqref{aprof} we arrive at
\begin{align*}
\nic{
\partial_t\tau(T,w,f^\ve)=
&-\sum_{n=1}^{|T|}\int_{[0,\ 1]^{|T|}} \prod_{(k,l)\in E(T)} w(\xi_k,\xi_l)\,\divv_{x_n} \Bigg(\eta_{\ve}*\left(f(t,x_n,\xi_n)\int_0^1\int_{\R^d} K_1(x_n-y)\,w(\xi_n,\zeta)\, \,f(t,y,\zeta)\,dy\,d\zeta\right)\Bigg)\\
&\qquad \times \prod_{m\neq n} f^{\ve}(t, x_m,\xi_m)\,d\xi_1\dots d\xi_{|T|}       \\
&\quad+\nu\sum_{n=1}^{|T|}\,\int_{[0,\ 1]^{|T|}} \prod_{(k,l)\in E(T)} w(\xi_k,\xi_l)\,\Delta_{x_n} f^{\ve} (t,x_n,\xi_n)\,\prod_{m\neq n} f^\ve(t, x_m,\xi_m)\,d\xi_1\dots d\xi_{|T|}\\
& -\sum_{n=1}^{|T|}\int_{[0,\ 1]^{|T|}} \prod_{(k,l)\in E(T)} w(\xi_k,\xi_l)\,\eta_{\ve} *\Bigg(f(t,x_n,\xi_n)\int_0^1\int_{\R^d} K_2(x_n-y)\,w(\xi_n,\zeta)\, \,f(t,y,\zeta)\,dy\,d\zeta\Bigg)\\
&\qquad \times \prod_{m\neq n} f^{\ve}(t, x_m,\xi_m)\,d\xi_1\dots d\xi_{|T|}       \\
& + A \sum_{n=1}^{|T|}\int_{[0,\ 1]^{|T|}} \prod_{(k,l)\in E(T)} w(\xi_k,\xi_l)f^{\ve}(t,x_n,\xi_n)\prod_{m \neq n} f^{\ve}(t, x_m,\xi_m)\,d\xi_1\dots d\xi_{|T|}   \\
}
&\partial_t\tau(T,w,f^\ve)=
-\sum_{n=1}^{|T|}\int_{[0,\ 1]^{|T|}} \prod_{(k,l)\in E(T)} w(\xi_k,\xi_l)
\divv_{x_n} (\eta_{\ve}*(fV_1[f]))(t,x_n,\xi_n)\prod_{m \neq n} f^{\ve}(t, x_m,\xi_m)\,d\xi_1\dots d\xi_{|T|} 
\\
&+\sum_{n=1}^{|T|}\int_{[0,\ 1]^{|T|}} \prod_{(k,l)\in E(T)} w(\xi_k,\xi_l) \Big[ \nu \Delta_{x_n} f^{\ve}(t,x_n,\xi_n) +\eta_{\ve}*(f(A-V_2[f]))(t,x_n,\xi_n)\Big]\prod_{m \neq n} f^{\ve}(t, x_m,\xi_m)\,d\xi_1\dots d\xi_{|T|} \\
=&-\sum_{n=1}^{|T|}\int_{[0,\ 1]^{|T|+1}}\divv_{x_{n}}\Bigg(\eta_{\ve}*\left(\prod_{(k,l)\in E(T)}w(\xi_k,\xi_l)\int_{\R^d} K_1(\cdot-y)\,w(\xi_n,\zeta)\, f (t,\cdot,\xi_n)\,f(t,y,\zeta)\,dy\,\right)(x_n)\Bigg)\\
&\times \prod_{m\neq n}  f^{\ve}(t,x_m,\xi_m)\,d\xi_1\dots d\xi_{|T|}d\zeta+\nu\sum_{n=1}^{|T|}\Delta_{x_n} \tau(T,w,f^\ve) + A|T|\tau(T,w,f^\ve) \\
& -\sum_{n=1}^{|T|}\int_{[0,\ 1]^{|T|+1}}\eta_{\ve}*\left(\prod_{(k,l)\in E(T)}w(\xi_k,\xi_l)\int_{\R^d} K_2(\cdot-y)\,w(\xi_n,\zeta)\, f (t,\cdot,\xi_n)\,f(t,y,\zeta)\,dy\,\right)(x_n)\\
& \times \prod_{m\neq n} f^{\ve}(t, x_m,\xi_m)\,d\xi_1\dots d\xi_{|T|} d\zeta. 
\end{align*}
Inserting this in a weak formulation for $\tau(T,w,f^\ve)$ we get
\begin{align*}
    &\int_0^{t_*}\int_{\R^{d|T|}}\tau(T,w,f^\ve)(t,x_{1},\dots , x_{|T|})\Phi'(t)\prod_{i=1}^{|T|}\vf^i(x_i) dx_{1}\dots dx_{|T|}dt=  \\
&-\sum_{n=1}^{|T|}\int_0^{t_*} \Phi(t) \int_{\R^{d|T|}}\nabla \vf^n(x_n)\int_{[0,\ 1]^{|T|+1}} \eta_{\ve}*\left(\prod_{(k,l)\in E(T)}w(\xi_k,\xi_l)\int_{\R^d} K_1(\cdot-y)\,w(\xi_n,\zeta)\, f (t,\cdot,\xi_n)\,f(t,y,\zeta)\,dy\,\right)(x_n)  \\
&\times \prod_{m\neq n} f^{\ve}(t, x_m,\xi_m)\,d\xi_1\dots d\xi_{|T|}d\zeta\prod_{i\neq n}^{|T|}\vf^i(x_i) dx_{1}\dots dx_{|T|}dt\\
&-\nu \sum_{n=1}^{|T|}\int_0^{t_*}\Phi(t)\int_{\R^{d|T|}}\tau(T,w,f^\ve)(t,x_{1},\dots , x_{|T|})\Delta \vf^n(\xi_n)\prod_{i\neq n}^{|T|}\vf^i(x_i) dx_{1}\dots dx_{|T|}dt \\
&- A|T|\int_0^{t_*}\Phi(t)\int_{\R^{d|T|}}\tau(T,w,f^\ve)(t,x_{1},\dots , x_{|T|}) \prod_{i=1}^{|T|}\vf^i(x_i) dx_{1}\dots dx_{|T|}dt\\
&+\sum_{n=1}^{|T|}\int_0^{t_*} \Phi(t) \int_{\R^{d|T|}}\int_{[0,\ 1]^{|T|+1}} \eta_{\ve}*\left(\prod_{(k,l)\in E(T)}w(\xi_k,\xi_l)\int_{\R^d} K_2(\cdot-y)\,w(\xi_n,\zeta)\, f (t,\cdot,\xi_n)\,f(t,y,\zeta)\,dy\right)(x_n)  \\
&\times \prod_{m\neq n} f^{\ve}(t, x_m,\xi_m)\,d\xi_1\dots d\xi_{|T|}d\zeta \prod_{i=1}^{|T|}\vf^i(x_i) dx_{1}\dots dx_{|T|}dt.
\end{align*}
Due to the properties of convolution with mollifying kernel we may pass to the limit with $\ve$. Then, we 
recall that for any $n=1,\ldots,|T|$ the tree $T+n$ contains exactly the same edges as $T$ plus a new edge from the vertex $n$ to the new vertex $\vert T\vert+1$. Thus, recalling that we only consider edges $(k,l)\in E(T+n)$ that run from the root to the leaves, we have that
\[
\tau(T+n)=\int_{[0,\ 1]^{|T|+1}} \, w(\xi_n, \xi_{|T|+1})\prod_{(k,l)\in E(T)} w(\xi_k,\xi_l)\;f(t,x_{|T|+1},\xi_{|T|+1})\prod_{m\in V(T)} f(t,x_m,\xi_m)\,d\xi_1\ldots\,d\xi_{|T|+1}.
\]
Changing variables $\xi_{|T|+1}$ with $\zeta$ and $x_{|T|+1}$ with $y$, we arrive at
\begin{align*}
&\int_0^{t_*}\int_{\R^{d|T|}}\tau(T,w,f)(t,x_{1},\dots , x_{|T|})\Phi'(t)\prod_{i=1}^{|T|}\vf^i(x_i) dx_{1}\dots dx_{|T|}dt=  \\
&-\sum_{n=1}^{|T|}\int_0^{t_*} \Phi(t) \int_{\R^{d|T|}}\nabla \vf^n(x_n)\int_{\R^d} K_1(x_n-y)\tau(T+n,w,f)(t,x_1,\dots,x_{|T|},y)\,dy\,\prod_{i\neq n}^{|T|}\vf^i(x_i) dx_{1}\dots dx_{|T|}dt \\
&-\nu \sum_{n=1}^{|T|}\int_0^{t_*}\Phi(t)\int_{\R^{d|T|}}\tau(T,w,f)(t,x_{1},\dots , x_{|T|})\Delta \vf^n(\xi_n)\prod_{i\neq n}^{|T|}\vf^i(x_i) dx_{1}\dots dx_{|T|}dt \\
&- A|T|\int_0^{t_*}\Phi(t)\int_{R^{d|T|}}\tau(T,w,f)(t,x_{1},\dots , x_{|T|}) \prod_{i=1}^{|T|}\vf^i(x_i) dx_{1}\dots dx_{|T|}dt\\
&+\sum_{n=1}^{|T|}\int_0^{t_*} \Phi(t) \int_{\R^{d|T|}}\int_{\R^d} K_2(x_n-y)\tau(T+n,w,f)(t,x_1,\dots,x_{|T|},y)dy\,\prod_{i=1}^{|T|}\vf^i(x_i) dx_{1}\dots dx_{|T|}dt.
\end{align*}
By the standard density argument we obtain that $\tau(T,w,f)$ indeed satisfy (\ref{treeeq}) in the sense of distributions.
\end{proof}


Our next target is the proof  of stability estimate for weak solutions to (\ref{meannu}) with $\nu = 0$, with respect to the initial condition and $w$ (Lemma \ref{mainstab}). However, we firstly obtain the stability result 
for the solutions $f^\nu$ to the regularized problem with $\nu > 0$ and then comparing $f$ and $f^\nu$ we optimize $\nu$. To obtain the result for $f^\nu$ we actually study the stability of  (\ref{treeeq}).
The results below are the adaptations of Theorem 4.19, Lemma 4.22 and Theorem 4.23 from \cite{JPS2025} to non-conservative setting. Analogously to \cite{JPS2025}, we formulate the first result in terms of generic distribution solutions  $h=(h_T)_{T\in \Tree}$ with $h_T\in L^\infty(
(0, t_*);L^1(\R^{d\vert T\vert}))$ for any $T\in \Tree$ to the  hierarchy of  non-exchangeable Vlasov equations
\begin{align}\label{hiergeneq}
 &\partial_t h_T+\sum_{i=1}^{|T|} \mbox{div}_{x_i}\left(\int_{\R^d} K_1(x_i-y)\,h_{T+i}(t,x_1,\ldots,x_{|T|},y)\,dy\right)\\ \nonumber 
 =&A|T|h_T -\sum_{i=1}^{|T|} \int_{\R^d} K_2(x_i-y)\,h_{T+i}(t,x_1,\ldots,x_{|T|},y)\,dy +  \nu\,\sum_{i=1}^{|T|} \Delta_{x_i} h_T,
\end{align}
with $K_j\in L^\infty(\R^d)$ for $j=1,2$, $K_2 \geq 0$ and $\nu>0$.
Subsequently, we apply this general result to the difference $\tau(T,w,f^\nu)-\tau(T,\tilde{w},\tilde{f}^\nu)$, $T \in \mathcal{T}$, where $(f^\nu,w)$ and $(\tilde{f}^\nu,\tilde{w})$ satisfy (\ref{meannu}) with different initial conditions. Note, that we do not claim any solvability result regarding (\ref{hiergeneq}). Let us introduce the norm of the hierarchy in which our stability result holds. 

\begin{defi}{\cite[Definition 4.18]{JPS2025}}
Let $h_T\in L^2(\R^{d |T|})$ for every $T\in \Tree$ and  set $h=(h_T)_{T\in \Tree}$. Then,  we define the family of norms:
\[
 \|h\|_{\lambda}=\sup_{T\in \Tree} \lambda^{|T|/2}\,\|h_T\|_{L^2(\R^{d\,|T|})}, \hd \lambda>0.
\]
\end{defi}
Note that in our application $h_T=\tau(T,w,f)$ the choice of the norm defined above seems particularly natural. Indeed, if $f\in L^2(\R^d)$ applying Lemma~\ref{treeb} we obtain
\[
\Vert h\Vert_\lambda\leq \sup_{T\in \Tree} \lambda^{|T|/2}\Vert w\Vert_{L^\infty_\xi\mathcal{M}_\zeta}^{|T|-1}\Vert f\Vert_{L^2(\R^d)}^{|T|},
\]
for every $\lambda>0$. Thus,  choosing $\lambda  < \|w\|^{-2\frac{|T|-1}{|T|}}_{L^\infty_\xi\mathcal{M}_\zeta} \|f\|^{-2}_{L^2(\R^d)}$, we get $\norm{h}_{\lambda} < \infty$. 
%
The already announced stability result of the solutions to (\ref{hiergeneq}) reads as follows. 
\begin{lemma} \label{hlambdabound}
Let $h=(h_T)_{T\in \Tree}$ be a distributional solution to \eqref{hiergeneq} with  $\nu>0$, $K_1, K_2\in L^2(\R^d)$ and $K_2 \geq 0$. Additionally, assume that $h_T\in L^\infty((0,t_{*});(L^1\cap L^2)(\R^{d |T|}))$ for any $T\in \Tree$ and that there exists $\lambda>0$ such that $C_\lambda:=\sup_{t\in (0,\ t_*)}\|h(t,\cdot)\|_{\lambda}<\infty$. Then, for every $p>1$ and every $\theta\in (0,2^{-p'}e^{-p'(2A+1)t_*})$ with $p'=\frac{p}{p-1}$, there exists a constant $C > 0$ dependent only on $p,A,t_*,\theta$ such that 
\[
\Vert h(t,\cdot)\Vert_{\theta\lambda}\leq \max\{C_{\lambda},1\}C_\lambda C\exp\left(p^{-\frac{\left((2\nu)^{-1}\norm{K_1}_{L^{2}(\R^d)}^2  + \norm{K_2}_{L^{2}(\R^d)}^2\right)}{\theta\lambda}t}\,\log\frac{\Vert h(0,\cdot)\Vert_{\theta \lambda}}{C_\lambda}\right),
\]
for every $t\in [0, t_*]$.
\end{lemma}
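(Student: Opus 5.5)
This follows the strategy of \cite[Theorem 4.19]{JPS2025}: an $L^2$ energy estimate along the hierarchy (\ref{hiergeneq}), with the new reaction terms absorbed into an exponential weight. For each $T\in\Tree$ I multiply (\ref{hiergeneq}) by $h_T$ and integrate over $\R^{d|T|}$; the regularity assumptions justify this after mollifying in $x$. This gives
\[
\frac12\frac{d}{dt}\|h_T\|_{L^2}^2 + \nu\sum_{i=1}^{|T|}\|\nabla_{x_i}h_T\|_{L^2}^2 = A|T|\,\|h_T\|_{L^2}^2 + \sum_{i=1}^{|T|}\int \nabla_{x_i}h_T\cdot \Big(\ird K_1(x_i-y)h_{T+i}\,dy\Big) - \sum_{i=1}^{|T|}\int h_T\ird K_2(x_i-y)h_{T+i}\,dy .
\]
By Cauchy--Schwarz in $y$,
\[
\Big\|\ird K_j(x_i-y)h_{T+i}(\cdot,y)\,dy\Big\|_{L^2(\R^{d|T|})}\le \|K_j\|_{L^2}\|h_{T+i}\|_{L^2(\R^{d(|T|+1)})}.
\]
Young's inequality then bounds the transport term by $\nu\|\nabla_{x_i}h_T\|^2+(4\nu)^{-1}\|K_1\|_{L^2}^2\|h_{T+i}\|^2$, which absorbs the dissipation. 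It bounds the $K_2$ term by $\frac12\|h_T\|^2+\frac12\|K_2\|_{L^2}^2\|h_{T+i}\|^2$. The sign of $K_2$ is not even needed here. The result is
\[
\frac{d}{dt}\|h_T\|^2\le (2A+1)|T|\,\|h_T\|^2+\Big(\tfrac{1}{2\nu}\|K_1\|_{L^2}^2+\|K_2\|_{L^2}^2\Big)\sum_{i=1}^{|T|}\|h_{T+i}\|^2 .
\]

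Next I set $g_T(t):=e^{-(2A+1)|T|t}\|h_T(t)\|^2$ and $\kappa:=(2\nu)^{-1}\|K_1\|^2+\|K_2\|^2$. Then
\[
g_T'\le \kappa e^{(2A+1)t}\sum_i g_{T+i},
\]
and the remaining weight $e^{(2A+1)t}\le e^{(2A+1)t_*}$ is harmless. This is the only place the non-conservative term enters, and it is why the admissible range of $\theta$ shrinks by the factor $e^{-p'(2A+1)t_*}$ compared to \cite{JPS2025}. Iterating this differential inequality $k$ times along the hierarchy gives a Duhamel expansion. It has a main part that is a sum over trees of order $|T|+k$ of initial data $g_{T'}(0)$, and a remainder containing $\sup_t g_{T'}$ at order $|T|+k$, which is controlled by $C_\lambda$. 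The number of trees obtained by adding $k$ leaves to $T$ is $|T|(|T|+1)\cdots(|T|+k-1)$. Together with the time simplex volume $t^k/k!$, this produces the combinatorial factor $\binom{|T|+k-1}{k}(\kappa' t)^k$, where $\kappa'=\kappa e^{(2A+1)t_*}$.

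Multiplying by $(\theta\lambda)^{|T|}$ and using the bounds $\|h_{T'}(0)\|^2\le(\theta\lambda)^{-|T'|}\|h(0)\|_{\theta\lambda}^2$ and $\|h_{T'}\|^2\le \lambda^{-|T'|}C_\lambda^2$ reduces everything to bounding
\[
\sum_{k<K}\binom{|T|+k-1}{k}\Big(\tfrac{\kappa' t}{\theta\lambda}\Big)^k\|h(0)\|_{\theta\lambda}^2 \;+\; \binom{|T|+K-1}{K}\Big(\tfrac{\kappa' t}{\lambda}\Big)^K\theta^{|T|}C_\lambda^2 .
\]
I will use $\binom{n+k}{k}\le 2^{n+k}$. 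Then $\theta^{|T|}2^{|T|}$ is summable, and the constraint $\theta<2^{-p'}e^{-p'(2A+1)t_*}$ is exactly what makes the remainder decay after H\"older with exponents $p,p'$ in the splitting. I then choose the cutoff $K$ optimally in terms of $\log(\|h(0)\|_{\theta\lambda}/C_\lambda)$. This interpolation yields the bound
\[
\|h(0)\|_{\theta\lambda}^{\,p^{-\kappa t/(\theta\lambda)}}\;C_\lambda^{\,1-p^{-\kappa t/(\theta\lambda)}},
\]
which is precisely the stated form $C_\lambda\exp\big(p^{-\kappa t/(\theta\lambda)}\log(\|h(0)\|_{\theta\lambda}/C_\lambda)\big)$. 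The prefactor $\max\{C_\lambda,1\}$ absorbs the case $C_\lambda<1$.

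The main obstacle I expect is this last optimization. The bookkeeping of $p,p'$ and of the exponential weight has to produce exactly the double-exponential rate with base $p$ and the stated range of $\theta$. In the non-conservative case the extra $e^{(2A+1)t}$ factors accumulate at every level of the iteration. My first attempt is to absorb them once into $\kappa'$ and into the $\theta$-threshold, as sketched above. If that fails, I would instead run the argument in \cite{JPS2025} on the rescaled hierarchy $e^{-A|T|t}h_T$, which removes the $A|T|h_T$ term exactly.
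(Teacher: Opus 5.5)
Your energy estimate, the weighted quantity $g_T=e^{-(2A+1)|T|t}\|h_T\|_{L^2}^2$ with $g_T'\le\kappa e^{(2A+1)t}\sum_i g_{T+i}$, and the factor $\binom{|T|+k-1}{k}$ in the iterated Duhamel formula are exactly what the paper has. Your mollification argument for justifying the $L^2$ test is also fine.

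The gap is the final step: a single expansion on the whole interval $[0,t]$ with an optimized cutoff $K$ cannot give the statement.
Your remainder $\binom{|T|+K-1}{K}(\kappa' t/\lambda)^K\theta^{|T|}C_\lambda^2$ decays in $K$ only if $\kappa' t<\lambda$. The factor $\theta^{|T|}$ depends on $|T|$, not on $K$, so the restriction on $\theta$ cannot help there. The initial-data sum has ratio $\kappa' t/(\theta\lambda)$, which is worse still. For $t\gtrsim\lambda/\kappa'$ both pieces increase with $K$, and the best you obtain is the trivial bound $\|h(t)\|_{\theta\lambda}\lesssim C_\lambda$. Since $\kappa\ge\|K_1\|_{L^2}^2/(2\nu)$ and $\nu\to0$ in Lemma~\ref{mainstab}, this short-time window is negligible compared with $[0,t_*]$. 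Nor can any choice of $K$ produce the exponent $p^{-\kappa t/(\theta\lambda)}$: that exponent is $p^{-i}$, where $i$ counts time steps of length $\theta\lambda/\kappa$.

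The missing idea is time-stepping, which is what the paper does.
\begin{itemize}
\item It fixes $\delta=\theta\lambda e^{(2A+1)t_*}/C(t_*)=\theta\lambda/\kappa$, where $C(t)=\kappa e^{(2A+1)t}$, and sets $t_i=i\delta$.
\item It runs the expansion only on $[t_i,t_{i+1}]$ and lets $m\to\infty$ directly, with no cutoff optimization. On such a step the remainder is at most $\frac12 C_\lambda^2(2\theta e^{(2A+1)t_*})^m\to0$.
\item Let $|h|_k=\sup_{|T|=k}\|h_T\|_{L^2}$. The H\"older interpolation $(\theta^{1/p}\lambda)^k|h(t_i)|_k^2\le\|h(t_i)\|_{\theta\lambda}^{2/p}C_\lambda^{2/p'}$ is applied to the initial-data part, not to the remainder. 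The resulting geometric series converges because $2\theta^{1/p'}e^{(2A+1)t_*}<1$; this is where the restriction on $\theta$ enters.
\item This gives the one-step bound $\sup_{[t_i,t_{i+1}]}\|h\|_{\theta\lambda}\le\tilde C^{1/2}\|h(t_i)\|_{\theta\lambda}^{1/p}$ with $\tilde C\sim C_\lambda^{2/p'}$.
\end{itemize}
Because $C_\lambda$ bounds the $\lambda$-norm uniformly in time, the one-step bound can be repeated $i\le t/\delta=\kappa t/(\theta\lambda)$ times. That yields $\|h(0)\|_{\theta\lambda}^{p^{-i}}$, and hence the double-exponential rate in the statement.
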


\begin{proof}
We follow the proof of \cite[Theorem 4.19]{JPS2025} introducing the necessary adaptations to cover non-conservative problem.
At first we note that, under the assumptions of the lemma, if $h_T$ is a distributional solution to (\ref{hiergeneq}) with $\nu>0$, then it actually is a weak solution with $\partial_th_T \in L^{\infty}((0,t_*);H^{-1}(\R^d)), \nabla h_T \in L^{\infty}((0,t_*);L^2(\R^d))$ and it satisfies for every $\Phi \in H^{1}(\R^{d|T|})$ and almost all $t \in (0,t_*)$
\[
\left\langle \Phi, \partial_t h_T \right\rangle_{H^{1}(\R^{d|T|}) \times H^{-1}(\R^{d|T|})} + \nu \sum_{i=1}^{|T|}\int_{\R^{d|T|}}\nabla_{x_{i}}h_{T} \cdot \nabla_{x_{i}}\Phi dx_{1}\dots d_{x_{|T|}} = \left\langle\Phi, g_T\right\rangle_{H^{1}(\R^{d|T|}) \times H^{-1}(\R^{d|T|})},
\]
where
\[
g_T:= -\sum_{i=1}^{|T|} \divv_{x_i}\left(\int_{\R^d} K_1(x_i-y)\,h_{T+i}(t,x_1,\ldots,x_{|T|},y) dy\right)+A|T|h_T - \sum_{i=1}^{|T|}\ird K_2(x_i-y)h_{T+i}(t,x_1,\ldots,x_{|T|},y)dy. 
\]
Indeed, $g_T \in H^{-1}(\R^{d|T|})$, since from H\"older inequality
  \eqq{
  \int_{\R^{d|T|}}\abs{\ird K_{j}(x_i-y)h_{T+i}(t,x_1,\dots,x_{T},y)dy}^{2}dx_{1}\dots d x_{T}\leq \Vert K_j\Vert_{L^2(\R^d)}^2\Vert h_{T+i}(t,\cdot)\Vert^2_{L^2(\R^{d(|T|+1)})} \hd \m{ for } j=1,2.}{hnj}
Hence, $h_T$ is a distributional solution to the Poisson equation with the source term in $L^{\infty}((0,t_{*});H^{-1}(\R^{d|T|})$. By the theory of parabolic equations it is actually a weak solution with the regularity described above.

Thus, by standard approximation  by step functions we may take as a test function $h_T$ to the result
\begin{align*}
&\frac{d}{dt}\int_{\R^{d\,|T|}} |h_T|^2\,dx_1\dots dx_{|T|} + 2\,\nu\,\sum_{i=1}^{|T|}\int_{\R^{d\,|T|}} |\nabla_{x_i}h_T|^2\,dx_1\dots dx_{|T|} \leq  \nonumber \\
&   2\,\sum_{i=1}^{|T|}\left(\int_{\R^{d|T|}} \abs{\nabla_{x_i} h_{T}}^2\ dx_1\dots dx_{|T|}\right)^{\frac{1}{2}}\left(\int_{\R^{d|T|}} \abs{\ird K_{1}(x_i-y)h_{T+i}dy}^2\ dx_1\dots dx_{|T|}\right)^{\frac{1}{2}} \nonumber \\
&+2AT\int_{\R^{d|T|}} \abs{ h_{T}}^2\ dx_1\dots dx_{|T|} \\
&+  2\,\sum_{i=1}^{|T|}\left(\int_{\R^{d|T|}} \abs{ h_{T}}^2\ dx_1\dots dx_{|T|}\right)^{\frac{1}{2}}\left(\int_{\R^{d|T|}} \abs{\ird K_{2}(x_i-y)h_{T+i}dy}^2\ dx_1\dots dx_{|T|}\right)^{\frac{1}{2}}.
\end{align*}
Applying (\ref{hnj}) and Young inequality we arrive at
\[
 \frac{d}{dt}\|h_T(t,\cdot)\|_{L^2(\R^{d\,|T|})}^2 \leq \left(\frac{\norm{K_{1}}^2_{L^{2}(\R^d)}}{2\nu} +\norm{K_{2}}^2_{L^{2}(\R^d)}\right)\sum_{i=1}^{|T|} \norm{h_{T+i}(t,\cdot)}_{L^2(\R^{d\,(|T|+1)})}^2 + (2A+1)T \norm{h_T(t,\cdot)}_{L^2(\R^{d\,|T|})}^2.
\]
Thus, by Gr\"onwall inequality
\begin{align}\label{haa}
  \norm{h_T(t,\cdot)}_{L^2(\R^{d\,|T|})}^2 & \leq e^{(2A+1)|T|(t-s)}\norm{h_T(s,\cdot)}_{L^2(\R^{d\,|T|})}^2\\ \nonumber
 &  + \left(\frac{\norm{K_{1}}^2_{L^{2}(\R^d)}}{2\nu}+\norm{K_{2}}^2_{L^{2}(\R^d)}\right)\int_s^t e^{(2A+1)|T|(t-r)} \sum_{i=1}^{|T|} \norm{h_{T+i}(r,\cdot)}_{L^2(\R^{d\,|T|})}^2 dr.
\end{align}

Define the family of  norms
\[
\abs{h(t,\cdot)}_{n}:=\sup_{|T|=n} \|h_T(t,\cdot)\|_{L^2(\R^{d\,|T|})},
\]
for any $n\in \mathbb{N}$.
Then, estimating firstly the RHS of (\ref{haa}) by the appropriate newly defined  norms and then taking supremum over $|T|=n$ we arrive at 
\eqq{
 \abs{h(t,\cdot)}_{n}^2\leq e^{(2A+1)n(t-s)}\abs{h(s,\cdot)}_n^2 + n \left(\frac{\norm{K_{1}}^2_{L^{2}(\R^d)}}{2\nu}+\norm{K_{2}}^2_{L^{2}(\R^d)}\right)\,\int_{s}^t e^{(2A+1)n
 (t-r)}\abs{h(r,\cdot)}_{n+1}^2 dr.
}{hit}
Let us introduce the following notation:
\[
\vf^n(t):=  e^{-(2A+1)nt}\abs{h(t,\cdot)}_{n}^2, \hd C(t):= \left(\frac{\norm{K_{1}}^2_{L^{2}(\R^d)}}{2\nu}+\norm{K_{2}}^2_{L^{2}(\R^d)}\right)e^{(2A+1)t}.
\]
Then the estimate (\ref{hit}) leads to
\[
\vf^n(t) \leq \vf^n(s) + n C(t)\int_s^{t}\vf^{n+1}(r)dr \m{ for every } n \in \mathbb{N}.
\]
Applying this inequality for index $n+1$ and using the fact that $C(t)$ is increasing we obtain
\[
\vf^n(t) \leq \vf^n(s) +nC(t)\vf^{n+1}(s)(t-s) +n(n+1) C^2(t)\int_s^{t}\vf^{n+2}(r)(t-r)dr.
\]
Iterating this estimate further we obtain for every $m>n$ 
\[
\vf^n(t) \leq (C(t))^{m-n} \frac{(m-1)!\,}{(n-1)!\,(m-n-1)!}\,\int_{s}^t \vf^{m}(r)(t-r)^{m-n-1}dr + \sum_{k=n}^{m-1} \vf^{k}(s) (C(t))^{k-n} \frac{(k-1)!\,(t-s)^{k-n}}{(n-1)!\,(k-n)!}.
\]
Coming back to original notation with $\vf^n$ and leaving $C(t)$ to keep the notation concise we arrive at
\begin{align*}
  &e^{-(2A+1)nt}\abs{h(t,\cdot)}_{n}^2\leq \left(C(t)\right)^{m-n}\,\frac{(m-1)!\,}{(n-1)!\,(m-n-1)!}\,\int_{s}^t\,e^{-(2A+1)mr}(t-r)^{m-n-1}  \abs{h(r,\cdot)}_{m}^2\,dr\\
  &+\sum_{k=n}^{m-1}  \left(C(t)\right)^{k-n}\, \frac{(k-1)!\,(t-s)^{k-n}}{(n-1)!\,(k-n)!}\, e^{-(2A+1)ks}\abs{h(s,\cdot)}_{k}^2
\end{align*}
for any $m>n$. From the assumption we infer that there exists $\lambda > 0$ such that 
\[
\abs{h(r,\cdot)}^2_m\leq C^2_{\lambda}\lambda^{-m},
\]
for any $r\in [s,\ t]$. Hence, estimating the exponents rised to negative power by one and calculating the integral we obtain
\[
  \begin{split}
e^{-(2A+1)nt}\abs{h(t,\cdot)}_{n}^2&\leq \left(C(t_*)\right)^{m-n}\,\binom{m-1}{n-1}\,(t-s)^{m-n}\, C^2_{\lambda} \lambda^{-m} \\
&+\sum_{k=n}^{m-1} \left(C(t_*)\right)^{k-n}\,\binom{k-1}{n-1} \,(t-s)^{k-n}\, \abs{h(s,\cdot)}_{k}^2.
\end{split}
\]
Choose any $p>1$, any $\theta\in (0,2^{-p'}e^{-p'(2A+1)t_*})$ and fix a time step
\[
\delta:=\frac{\theta\lambda e^{(2A+
1)t_*}}{C(t_*)}.
\]
Then for every $i=0,\ldots,N$ set $t_i=\delta\,i$. Taking in the estimate above $t=t_{i+1}$ and $s=t_i$, we get

\[
\sup_{t\in [t_i,\ t_{i+1}]}e^{-(2A+1)nt_{*}}\abs{h(t,\cdot)}_{n}^2\leq C^2_{\lambda} \theta^{m-n} e^{(2A+1)t_*(m-n)}\lambda^{-n}\,\binom{m-1}{n-1}+\sum_{k=n}^{m-1}  (\theta\lambda e^{(2A+1)t_*})^{k-n} \binom{k-1}{n-1} \abs{h(t_i,\cdot)}_{k}^2.
\]
Estimating $\binom{m-1}{n-1}\leq 2^{m-1}$, which follows from $\sum_{n=1}^m \binom{m-1}{n-1}=2^{m-1}$, we arrive at
\[
\sup_{t\in [t_i,\ t_{i+1}]}(\theta\lambda)^n\abs{h(t,\cdot)}_{n}^2\leq \frac{1}{2} C^2_{\lambda} (2\theta e^{(2A+1)t_*})^m +\frac{1}{2}\sum_{k=n}^{m-1}  (2\theta\lambda e^{2(A+1)t_*})^k\abs{h(t_i,\cdot)}_{k}^2.
\]
Young estimate gives
\[
(\theta^{1/p}\lambda)^k\,\abs{h(t_i,\cdot)}_{k}^2= \left((\theta\lambda)^k\,\abs{h(t_i,\cdot)}_{k}^2\right)^{1/p}\, \left(\lambda^k\,\abs{h(t_i,\cdot)}_{k}^2\right)^{1/p'}\leq \Vert h(t_i,\cdot)\Vert_{\theta\lambda}^{2/p}\,\Vert h(t_i,\cdot)\Vert_{\lambda}^{2/p'}\leq  \Vert h(t_i,\cdot)\Vert_{\theta\lambda}^{2/p}C_{\lambda}^{\frac{2}{p'}}.
\]
Applying this estimate and recalling that due to assumptions $2\theta^{1/p'}e^{(2A+1)t_*}<1$ we obtain
\begin{align*}
   \sup_{t\in [t_i,\ t_{i+1}]}(\theta\lambda )^n\abs{h(t,\cdot)}_{n}^2&\leq\frac{C^2_{\lambda}}{2}(2\theta e^{2(A+1)t_*})^m\,+\frac{C_{\lambda}^{\frac{2}{p'}}}{2}\sum_{k=n}^{m-1}  \left(2\theta^{1/p'}e^{(2A+1)t_*}\right)^{k}\,  \|h(t_i,\cdot)\|_{\theta\lambda}^{2/p} \\
   & \leq \frac{C^2_{\lambda}}{2}(2\theta e^{2(A+1)t_*})^m +\frac{C_{\lambda}^{\frac{2}{p'}}}{2\left(1-2\theta^{1/p'}e^{(2A+1)t_*}\right)}\,\|h(t_i,\cdot)\|_{\theta\lambda}^{2/p}.
\end{align*}
 Passing to the limit $m\to \infty$ and then taking supremum over $n$ yields
\[
\sup_{t\in [t_i,\ t_{i+1}]}\|h(t,\cdot)\|_{\theta\lambda}\leq \left(\frac{C_{\lambda}^{\frac{2}{p'}}}{2\left(1-2\theta^{1/p'}e^{(2A+1)t_*}\right)}\right)^{1/2}\, \|h(t_i,\cdot)\|_{\theta\lambda}^{1/p}.
\]
Then, for $t\in[t_i, t_{i+1}]$ and $\tilde{C} = \frac{C_{\lambda}^{\frac{2}{p'}}}{2\left(1-2\theta^{1/p'}e^{(2A+1)t_*}\right)}$, we may iteratively estimate
\[
\begin{split}
\|h(t,\cdot)\|_{\theta \lambda} &\leq \tilde{C}^{\frac{1}{2}} \|h(t_i,\cdot)\|_{\theta \lambda}^{1/p} \leq \tilde{C}^{\frac{1}{2}} \tilde{C}^{\frac{1}{2p}} \|h(t_{i-1},\cdot)\|_{\theta \lambda}^{1/p^2} \leq \tilde{C}^{\frac{1}{2}} \tilde{C}^{\frac{1}{2p}} \tilde{C}^{\frac{1}{2p^2}} \|h(t_{i-2},\cdot)\|_{\theta \lambda}^{1/p^3}\\
& \leq \cdots \leq \tilde{C}^{\frac{1}{2} \sum_{j=0}^{i-1} \frac{1}{p^j}} \|h(0,\cdot)\|_{\theta \lambda}^{1/p^i} \leq C_{p}^{\frac{p'}{2}}\|h(0,\cdot)\|_{\theta \lambda}^{p^{-i}}.
\end{split}
\]
Hence,
\[
\sup_{t\in [t_i,\ t_{i+1}]}\|h(t,\cdot)\|_{\theta\lambda}\leq C_{\lambda}\left(\frac{1}{2\left(1-2\theta^{1/p'}e^{(2A+1)t_*}\right)}\right)^{p'/2}\,\|h(0,\cdot)\|_{\theta\lambda}^{p^{-i}}.
\]
In order to complete the proof, for arbitrary $t\in [0,\ t_*)$ we choose $i$ such that $t\in [t_i,\ t_{i+1})$. Furthemore, note that by the assumptions $\frac{\Vert h(0,\cdot)\Vert_{\theta\lambda}}{C_{\lambda}}\leq \frac{\Vert h(0,\cdot)\Vert_{\lambda}}{C_{\lambda}}\leq 1$, hence using $i\leq \frac{t}{\delta}$
\[
\|h(0,\cdot)\|_{\theta\lambda}^{p^{-i}} = \left(\frac{\|h(0,\cdot)\|_{\theta\lambda}}{C_{\lambda}}\right)^{p^{-i}}C_{\lambda}^{p^{-i}} \leq \left(\frac{\|h(0,\cdot)\|_{\theta\lambda}}{C_{\lambda}}\right)^{p^{-\frac{t}{\delta}}}\max\{C_{\lambda},1\},
\]
which finishes the proof.
 \end{proof}

In addition to Lemma \ref{hlambdabound}, the second component in the proof of stability estimate, is the following bound of the solution to (\ref{meannu}) with $\nu = 0$.

\begin{lemma}\label{linftyestf}
Assume that $f$ is a nonnegative weak solution to \eqref{meannu} with $w\in L^\infty_\xi \mathcal{M}_\zeta \cap L^\infty_\zeta \mathcal{M}_\xi$, $\nu \geq 0$, $K_1, K_2 \in (L^1 \cap L^{\infty})(\R^d)$ $K_1\in W^{1,1}(\R^d)$, $\divv K_1\in  L^\infty(\R^d)$,  $K_2 \geq 0$,   and  $f^0\in L^\infty((0,1); (L^1\cap L^\infty\cap H^1)(\R^d))$.  Then, $f\in L^\infty((0,t_*))\times(0,1); (L^1\cap L^\infty\cap H^1)(\R^d))$ and it satisfies
  \[
\|f(t,\cdot,\cdot)\|_{L^\infty((0,1); (L^1\cap L^\infty\cap H^1)(\R^d)}\leq C \exp(\exp(\tilde{C}e^{At}t)),
  \]
for some $C,\tilde{C}\in \mathbb{R}_+$ depending only on $\|f^0\|_{L^\infty((0,1); (L^1\cap L^\infty\cap H^1(\R^d)))}, \|w\|$, $\|K_1\|_{W^{1,1}(\R^d)}$, $\|\divv K_1\|_{L^\infty}$, $\norm{K_2}_{L^{1}(\R^d)}$ and $A$.
\end{lemma}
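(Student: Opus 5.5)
Since $f\geq 0$, $K_2\geq 0$ and $w\geq 0$, the reaction term $f(A-V_2[f])$ is bounded above by $Af$. Together with the divergence structure, this gives all the a priori estimates, which I would derive at the level of a smooth approximation (mollified data, $\nu>0$, or the approximating scheme from the proof of Proposition \ref{existenceprop}) and then pass to the limit by lower semicontinuity of the norms. The estimates are carried out for a.e. fixed $\xi$, and all constants depend on $\xi$ only through $\|w\|$ and $\|f^0\|_{L^\infty((0,1);\cdot)}$.

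\textbf{Step 1: $L^1$ bound.} Integrate (\ref{meannu}) in $x$. The divergence term and the diffusion term vanish, and $V_2[f]\geq 0$ because $f\geq0$, $K_2\geq0$ and $w\geq0$. Hence $\frac{d}{dt}\|f(t,\cdot,\xi)\|_{L^1}\leq A\|f(t,\cdot,\xi)\|_{L^1}$, which gives $\|f(t,\cdot,\xi)\|_{L^1}\leq e^{At}\|f^0\|_{L^\infty L^1}$. By Lemma \ref{impoest},
\[
\|V_j[f]\|_{L^\infty}\leq \|w\|\,\|K_j\|_{L^\infty}e^{At}\|f^0\|_{L^\infty L^1},\qquad \|\divv V_1[f]\|_{L^\infty}\leq \|w\|\,\|\divv K_1\|_{L^\infty}e^{At}\|f^0\|_{L^\infty L^1},
\]
and $\|\nabla V_1[f]\|_{L^\infty}$ is bounded only in terms of $\|\nabla K_1\|_{L^1}\|f\|_{L^\infty}$. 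The factor $e^{At}$ produced here is the origin of the $e^{At}$ inside the final double exponential.

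\textbf{Step 2: $L^p$ and $L^\infty$ bounds.} Multiply by $p f^{p-1}$ and integrate. The transport term contributes $(p-1)\int f^p\divv V_1$ and the diffusion term has a good sign, so
\[
\frac{d}{dt}\|f\|_{L^p}^p\leq p\left(A+\|\divv V_1\|_{L^\infty}\right)\|f\|_{L^p}^p.
\]
This gives $\|f(t)\|_{L^p}\leq \|f^0\|_{L^p}\exp\big(\int_0^t (A+\|\divv V_1\|_\infty)\big)$ uniformly in $p$, and letting $p\to\infty$ yields $\|f(t)\|_{L^\infty}\leq C\exp(\tilde Ce^{At}t)$. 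Alternatively, one can use the characteristics representation when $\nu=0$.

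\textbf{Step 3: $H^1$ bound (main obstacle).} Differentiate the equation in $x$ and test with $\nabla f$. The terms that appear are
\[
\int |\nabla f|^2\,|\nabla V_1|,\qquad \int \nabla f\cdot f\,\nabla\divv V_1,\qquad \int \nabla f\cdot f\,\nabla V_2,\qquad A\|\nabla f\|_{L^2}^2.
\]
I would handle them as follows, with $\nabla V_1=\int\!\!\int \nabla K_1(x-y)f\,dy\,w$, $\nabla\divv V_1=\int\!\!\int \divv K_1(x-y)\nabla f\,dy\,w$ and $\nabla V_2=\int\!\!\int K_2(x-y)\nabla f\,dy\,w$, where the derivative is moved onto $f$ in the last two:
\begin{itemize}
\item[(a)] $\|\nabla V_1\|_{L^\infty}\leq \|w\|\,\|K_1\|_{W^{1,1}}\|f\|_{L^\infty L^\infty}$ is already controlled by Step 2, so the first term is at most a bounded coefficient times $\|\nabla f\|_{L^2}^2$.
\item[(b)] The second and third terms are bounded by $\|f\|_{L^\infty}\|\nabla f\|_{L^2}\,\|w\|\,(\|\divv K_1\|_{L^1\cap L^\infty}+\|K_2\|_{L^1})\,\sup_\zeta\|\nabla f(\cdot,\zeta)\|_{L^2}$, using Young's convolution inequality and Lemma \ref{impoest}. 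Two points need care here. First, $\|\divv K_1\|_{L^1}$ follows from $K_1\in W^{1,1}$. Second, the right-hand side involves the supremum over $\zeta$, so one must take $\sup_\xi$ of the differential inequality. I would do this in integrated (Grönwall) form for $\Psi(t)=\operatorname{ess\,sup}_\xi\|\nabla f(t,\cdot,\xi)\|_{L^2}^2$, which avoids differentiating an esssup.
\end{itemize}
The result is $\Psi'\leq \big(A+C\|f\|_{L^\infty L^\infty}\big)\Psi$ with $\|f\|_{L^\infty L^\infty}\leq C\exp(\tilde Ce^{At}t)$, which yields the double exponential $C\exp(\exp(\tilde Ce^{At}t))$. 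The delicate points are the rigorous justification of testing with the derivatives (done on the regularized problem) and keeping the constants independent of $\nu$ so that the bound also covers $\nu=0$.
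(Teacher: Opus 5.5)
Your proposal is correct and follows essentially the same route as the paper. It obtains the $L^1$ bound by integrating in $x$ and using $V_2[f]\geq 0$, then the $L^p$ estimate by testing with $pf^{p-1}$ and letting $p\to\infty$, and finally the $H^1$ estimate by testing with $\Delta f$ (equivalently, differentiating and testing with $\nabla f$), moving the derivative onto $f$ in the $\nabla\divv V_1$ and $\nabla V_2$ terms and closing with Gr\"onwall for the essential supremum in $\xi$ of $\|\nabla f\|_{L^2}^2$, all justified on the $\nu>0$ problem with $\nu$-independent constants and then vanishing viscosity.
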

\begin{proof}

We follow the approach from the proof of \cite[Lemma 4.22]{JPS2025} with the necessary modifications. 
To justify the calculations below in case $\nu=0$, let us discuss weak solutions to (\ref{meannu}) with $\nu>0$, obtain the $\nu$ - independent estimates and then conclude the result also in case $\nu=0$ by the approximation argument.
Firstly, note that if $f$ is a weak solution of (\ref{meannu}), then for almost all $\xi$ function $f(\cdot,\xi,\cdot)$  is a weak solution to the following parabolic problem with $\xi$ being a parameter
\[
\partial_t f - \nu \Delta f = g, \hd \hd g:=-\divv (f V_{1}(f)) + f(A-V_2[f]).
\]

 Hence, under the assumption of the lemma,  $f \in L^{\infty}((0,t_{*})\times (0,1)\times \R^d), \nabla f \in L^{\infty}((0,t_{*})\times (0,1);L^1\cap L^2(\R^d))$.
Indeed, since $f \in L^{\infty}((0,t_{*})\times (0,1); L^{1}(\R^d))$, by Lemma \ref{impoest} and Young inequality for convolutions, we obtain $V_{i}[f] \in L^{\infty}((0,t_{*})\times (0,1) \times \R^d)$ for $i=1,2$. Thus, $g$ is of the form $g = g^1 + \divv g^2 $, where $g^{1},g^2 \in L^{\infty}((0,t_{*})\times (0,1); L^{1}(\R^d))$. Hence, due to the properties of a heat semigroup we obtain in fact $f \in L^{\infty}((0,t_{*})\times (0,1);L^{p}(\R^d))$ for $p < \frac{d}{d-1}$ and $p=\infty$ in case $d=1$. But then, $g^1,g^2 \in L^{\infty}((0,t_{*})\times (0,1); L^{p}(\R^d))$ for every $p < \frac{d}{d-1}$ and by the maximal regularity result $f \in L^{\infty}((0,t_{*})\times (0,1);W^{1,p}(\R^d))$ for every $p < \frac{d}{d-1}$. By the Sobolev embedding we get again better integrability of $f$ and by bootstrapping we eventually arrive at $f \in L^{\infty}((0,t_{*})\times (0,1)\times \R^d), \nabla f \in L^{\infty}((0,t_{*})\times (0,1);L^1\cap L^2(\R^d))$. Then, we may apply classical $L^2$ theory for parabolic problems. Firstly, we note that
\begin{align*}
    \norm{\divv (f V_1[f])(t,\cdot,\xi)}_{L^2(\R^d)} &\leq \norm{V_1[f](t,\cdot,\xi)}_{L^\infty(\R^d)}\norm{\nabla f(t,\cdot,\xi)}_{L^2(\R^d)} \\
    &+ \norm{(\divv V_1[f])(t,\cdot,\xi)}_{L^{\infty}(\R^d)}\norm{f(t,\cdot,\xi)}_{L^2(\R^d)}
\end{align*}
and thus applying Lemma \ref{impoest} 
\begin{align*}
    \norm{\divv (f V_1[f])}_{L^{\infty}((0,t_*)\times (0,1);L^2(\R^d))} &\leq  \norm{f}_{L^{\infty}((0,t_*)\times (0,1); L^1(\R^d))}\norm{K_1}_{L^\infty(\R^d)}\norm{w}\norm{\nabla f}_{L^{\infty}((0,t_*)\times (0,1);L^2(\R^d))} \\
    &+ \norm{\divv K_1}_{L^\infty(\R^d)}\norm{w}\norm{f}_{L^{\infty}((0,t_*)\times (0,1);L^1(\R^d))}\norm{f}_{L^{\infty}((0,t_*)\times (0,1);L^2(\R^d))}.
\end{align*}
Hence, $f$ satisfies in a weak sense the heat equation with RHS in $L^{\infty}((0,t_{*});L^2(\R^d))$ where $\xi$ is a parameter. By the maximal regularity result $f \in L^{\infty}((0,t_{*})\times (0,1);H^{2}(\R^d)), \partial_t f \in L^{\infty}((0,t_{*})\times (0,1);L^{2}(\R^d))$  
 and we may test the equation by $\Delta f$. Note, that testing the equation by $f^p$ is also allowed, since $\nabla f^p \in L^2(\R^d)$, because $f$ is bounded. Let us test the equation then by $pf^{p-1}$ for fixed $p>1$:
\begin{align*}
    &\frac{d}{dt} \|f(t,\cdot,\xi)\|_{L^p(\R^d)}^p +\nu p(p-1) \ird \abs{\nabla f(t,x,\xi)}^{2}f^{p-2}(t,x,\xi)dx\\
    & =p(p-1)\ird f^{p-1}(t,x,\xi)\nabla f(t,x,\xi)\cdot\izj \ird K_{1}(x-y)f(t,y,\zeta)dyw(\xi,d\zeta) dx \\
    & = p\ird f^{p}(t,x,\xi)\left[A - \izj \ird  K_{2}(x-y)f(t,y,\zeta)dyw(\xi,d\zeta) \right]dx.
\end{align*}
Hence, applying integration by parts,  Lemma \ref{impoest} and recalling that $w,K_2, f \geq 0$ we arrive at
\[
\frac{d}{dt} \|f(t,\cdot,\xi)\|_{L^p(\R^d)}^p 
\leq \left((p-1)\,\norm{w}\|\divv K_1\|_{L^\infty(\R^d)}\,\| f(t,\cdot,\cdot)\|_{L^\infty((0,1); L^1(\R^d)}\, + Ap\right)\|f(t,\cdot,\xi)\|_{L^p(\R^d)}^p,
\]
for a.a. $t\in (0,\ t_*)$ and $\xi\in (0,\ 1)$.
On the other hand, since, $(f,w)$ is a nonnegative weak solution and $K_2 \geq 0$, integrating (\ref{meannu}) in space,  we may easily estimate
\eqq{
\norm{f(t,\cdot,\cdot)}_{L^{\infty}((0,1);L^{1}(\R^d)} \leq e^{At} \norm{f^0}_{L^{\infty}((0,1);L^{1}(\R^d))}, \hd \norm{f}_{L^{\infty}((0,t_*)\times (0,1);L^{1}(\R^d)} \leq e^{At_*} \norm{f^0}_{L^{\infty}((0,1);L^{1}(\R^d))}.
}{flinfty}
Hence, by Gr\"onwall inequality
\eqq{
\|f(t,\cdot,\cdot)\|_{L^{\infty}((0,1);L^p(\R^d))}^p \leq  \|f^0\|_{L^{\infty}((0,1);L^p(\R^d))}^pe^{\left([(p-1)\,\norm{w}\|\divv K_1\|_{L^\infty(\R^d)}e^{At}\norm{f^0}_{L^{\infty}((0,1);L^{1}(\R^d))} +Ap]t\right)}.
}{flinftytwo}
Rising the inequality to the power $1/p$ and passing to the limit with $p$ we arrive at
\eqq{
\norm{f(t,\cdot,\cdot)}_{L^{\infty}((0,1);L^\infty(\R^d))} \leq \norm{f^0}_{L^{\infty}((0,1);L^\infty(\R^d))} e^{Ce^{At}t},
}{finftyinfty}
with a positive constant $C$ dependent only on $\norm{w},A, \|\divv K_1\|_{L^\infty(\R^d)},\norm{f^0}_{L^{\infty}((0,1);L^{1}(\R^d))}$. Similarly, testing the equation by $\Delta f$ we get
\begin{align}\label{gradba}
    \frac{d}{dt} \frac{1}{2}\|\nabla f(t,\cdot,\xi)\|_{L^2(\R^d)}^2&=\frac{1}{2}\ird \divv V_1[f] \abs{\nabla f}^{2} dx-\ird \nabla^T f\cdot \nabla V_1[f] \cdot \nabla fdx - \ird \nabla f \cdot \nabla \divv V_1[f]  f dx\\ \nonumber
    &+\ird \abs{\nabla{f}}^2[A-V_2[f]] dx - \ird f \nabla V_2[f] \cdot \nabla fdx -\nu \ird \abs{\nabla^2 f}dx =:
\sum_{j=1}^6 I_j.
\end{align}
Let us estimate term by term. Obviously, $I_6 \leq 0$ and since $V_2[f] \geq 0$
\[
I_4 \leq A\norm{\nabla f(t,\cdot,\xi)}^2_{L^{2}(\R^d)}.
\]
Moreover, using the properties of convolution in $I_3$ and $I_5$ we obtain 
\begin{align*}
I_1&=\frac{1}{2}\ird |\nabla f(t,x,\xi)|^2\int_0^1\int_{\R^{d}}\divv K_1(x-y)\,\,f(t,y,\zeta)dyw(\xi,d\zeta)\,dx \\
I_2&=- \ird\int_{\R^{d}} \nabla f(t,x,\xi)^\top\cdot  \int_0^1 \nabla K_1(x-y)\, f(t,y,\zeta) w(\xi,d\zeta)\,dy\,  \cdot \nabla f(t,x,\xi) dx.\\
I_3&=- \int_{\R^{d}} f(t,x,\xi)\nabla f(t,x,\xi) \cdot\ird \izj \divv K_1(x-y)\,\nabla_y f(t,y,\zeta)\,\, w(\xi,d\zeta)dy\,dx,\\
I_5&=- \int_{\R^{d}} f(t,x,\xi)\nabla f(t,x,\xi) \cdot \ird \izj  K_2(x-y)\, \nabla_y f(t,y,\zeta)\,\, w(\xi,d\zeta)dy dx.
\end{align*}

Thus applying Lemma \ref{impoest} we estimate as follows
\begin{align*}
\vert I_1\vert&\leq \frac{1}{2}\Vert w\Vert\,\Vert \divv K_1\Vert_{L^\infty(\R^d)}\,\Vert f(t,\cdot,\cdot)\Vert_{L^\infty((0,1);L^1(\R^d))}\,\Vert \nabla f(t,\cdot,\xi)\Vert_{L^2(\R^d)}^2,\\
\vert I_2\vert &\leq \Vert w\Vert\,\Vert \nabla K_1\Vert_{L^1(\R^d)}\,\Vert f(t,\cdot,\cdot)\Vert_{L^\infty((0,1);L^\infty(\R^d))}\,\Vert \nabla f(t,\cdot,\xi)\Vert_{L^2(\R^d)}^2,\\
\vert I_3\vert &\leq \Vert w\Vert\,\Vert \divv K_1\Vert_{L^1(\R^d)}\,\Vert f(t,\cdot,\cdot)\Vert_{L^\infty((0,1); L^\infty(\R^d))}\,\Vert \nabla f(t,\cdot,\cdot)\Vert_{L^{\infty}((0,1);L^2(\R^d))}^2,\\
\vert I_5\vert &\leq \Vert w\Vert\,\Vert  K_2\Vert_{L^1(\R^d)}\,\Vert f(t,\cdot,\cdot)\Vert_{L^\infty((0,1); L^\infty(\R^d))}\,\Vert \nabla f(t,\cdot,\cdot)\Vert_{L^{\infty}((0,1);L^2(\R^d))}^2.
\end{align*}
Inserting the estimates above in (\ref{gradba}) and integrating with respect to time
\begin{align*}
 \norm{\nabla f(t,\cdot,\xi)}_{L^{2}(\R^d)}^2 &\leq \norm{\nabla f^0(\cdot,\xi)}_{L^{2}(\R^d)}^2 \\   
&+ C (\norm{f}_{L^{\infty}((0,t)\times(0,1)\times \R^d)} + \norm{f}_{L^{\infty}((0,t)\times(0,1);L^{1}(\R^d)}) \cdot \int_0^t \norm{\nabla f(s,\cdot,\cdot)}_{L^{\infty}((0,1);L^2(\R^d))}^2 ds,
\end{align*}
where $C$ is a positive constant dependent only on $A,\norm{w}$ and norms of $K_1,K_2$ used above. Taking sup with respect to $\xi$, applying Gr\"onwall lemma together with the estimates (\ref{flinfty}) and (\ref{finftyinfty})
we obtain
\[
\norm{\nabla f(t,\cdot,\cdot)}_{L^{\infty}((0,1);L^{2}(\R^d))} \leq \norm{\nabla f^0}_{L^{\infty}((0,1);L^{2}(\R^d))} \exp(\exp(\tilde{C}e^{At}t)),
\]
where $\tilde{C}$ depends on the same norms of $w,K_1,K_2$ as $C$ and also on $A,\norm{f^0}_{L^{\infty}((0,1);L^{1}(\R^d))}, \norm{f^0}_{L^{\infty}((0,1);L^{\infty}(\R^d))} $. In this way we obtain the desired $\nu$-independent estimates for the solutions $f^{\nu}$ of (\ref{meannu}) with positive $\nu$. Since, by classical vanishing viscosity method \cite{K1970} for fixed $w$, as $\nu$ goes to zero $f^{\nu}$  converges in $L^{1}((0,t_{*})\times \R^d)$ for almost al $\xi$ to weak solutions $f$ of (\ref{meannu}) with $\nu=0$, by the weak compactness argument we  finish the proof of lemma.
\end{proof}
At last, we are ready to establish the main stability estimate which is an analogue of \cite[Theorem~4.23]{JPS2025}.
\begin{lemma}\label{mainstab}
Let $(f,w)$ and $(\tilde f,\tilde{w})$, such that $f,\tilde{f} \in L^\infty((0,t_{*})\times (0,1);  (L^1\cap L^\infty\cap H^1)(\R^d))$ and $w,\;\tilde w\in L^\infty_\xi\mathcal M_\zeta \cap L^\infty_\zeta \mathcal M_\xi$ satisfy in a weak sense \eqref{meannu} with $\nu=0$ and  nonnegative initial conditions $f^0,\tilde{f}^0$, respectively. Assume that $K_1\in L^\infty(\R^d)\cap W^{1,1}(\R^d)$, $\divv K_1\in L^\infty(\R^d)$, $K_2 \in L^1(\R^d)\cap L^\infty(\R^d), K_2 \geq 0$. Then,  there exist $C,\lambda > 0$ such that for almost all  $t \in (0,t_{*})$
  \[
\left\|  \int_0^1 ( f -\tilde f) (t,\cdot, \xi)\,d\xi\,\right\|_{L^2(\R^d)}\leq {\frac{C}{\sqrt{(\log |\log \|\tau(\cdot,w, f^0)-\tau(\cdot,\tilde w,\tilde f^0)\|_{\lambda}|)_+}}.}
  \]
The constants $C$ and $\,\lambda$  depend on $t_*,A, \norm{w},\norm{\tilde{w}}$,  $\norm{K_1}_{L^{2}(\R^d)}, \norm{K_1}_{W^{1,1}(\R^d)}$, $\norm{\divv K_1}_{L^{\infty}(\R^d)}, \norm{K_2}_{L^{1}\cap L^{\infty}(\R^d)}$ and the norm of the initial data $f^0,\,\tilde f^0$ in $L^\infty((0,1);(L^1\cap L^\infty\cap H^1)(\R^d))$.
 \end{lemma}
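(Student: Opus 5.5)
We follow the strategy of \cite[Theorem 4.23]{JPS2025}: regularize by a viscosity $\nu>0$, apply the hierarchy stability of Lemma \ref{hlambdabound} to the difference of tree observables, and optimize in $\nu$.

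\textbf{Step 1 (regularized solutions and uniform bounds).} Let $f^\nu,\tilde f^\nu$ be the solutions of (\ref{meannu}) with viscosity $\nu>0$, with the same data $(w,f^0)$ and $(\tilde w,\tilde f^0)$. These are given by Proposition \ref{existenceprop}; where only $H^1$ regularity is available, we approximate the initial data first. By Lemma \ref{linftyestf}, $f,\tilde f,f^\nu,\tilde f^\nu$ are bounded in $L^\infty((0,t_*)\times(0,1);(L^1\cap L^\infty\cap H^1)(\R^d))$ uniformly in $\nu$, with a bound $B$ depending only on the listed quantities.

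\textbf{Step 2 (vanishing-viscosity error).} We compare $f$ with $f^\nu$ for each fixed $\xi$. Testing the difference of the equations by $f-f^\nu$ in $L^2$ is not directly allowed when $\nu=0$. Instead we use a duality or $\dot H^{-1}$ estimate, as in \cite{JPS2025}. The Laplacian produces a term $\nu\|\nabla f\|_{L^2}\|\nabla(f-f^\nu)\|_{L^2}\lesssim \nu B^2$. The transport term is controlled by the $L^\infty$ and $H^1$ bounds together with Lemma \ref{impoest}. The reaction term $f[A-V_2[f]]-f^\nu[A-V_2[f^\nu]]$ is Lipschitz in $L^2$ thanks to the $L^\infty$ bounds and $\|K_2\|_{L^1}$. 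Gr\"onwall then gives
\[
\sup_t\left\|\int_0^1(f-f^\nu)\,d\xi\right\|_{L^2}\le C\sqrt{\nu},
\]
and likewise for $\tilde f$. Alternatively we may use interpolation between an $\dot H^{-1}$ bound of order $\nu$ and the $H^1$ bound.

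\textbf{Step 3 (hierarchy stability).} By Proposition \ref{hier}, both $\tau(T,w,f^\nu)$ and $\tau(T,\tilde w,\tilde f^\nu)$ solve (\ref{treeeq}). The hierarchy is linear, so $h_T:=\tau(T,w,f^\nu)-\tau(T,\tilde w,\tilde f^\nu)$ solves (\ref{hiergeneq}). By Lemma \ref{treeb}, $\|h(t)\|_{\lambda}\le C_\lambda<\infty$ uniformly in $t$ as soon as $\lambda\le (\max\{\|w\|,\|\tilde w\|,1\}\,B)^{-2}/4$. Lemma \ref{hlambdabound} with fixed $p=2$ and $\theta$ then yields
\[
\|h(t)\|_{\theta\lambda}\le C\exp\bigl(-2^{-C'/(\nu\theta\lambda)}\,|\log(\delta_0/C_\lambda)|\bigr),\qquad \delta_0:=\|\tau(\cdot,w,f^0)-\tau(\cdot,\tilde w,\tilde f^0)\|_{\theta\lambda}.
\]
This uses $\log(\delta_0/C_\lambda)<0$; otherwise the claim is trivial. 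Taking $T=T_1$ gives $h_{T_1}=\int_0^1(f^\nu-\tilde f^\nu)\,d\xi$, hence
\[
\left\|\int_0^1(f^\nu-\tilde f^\nu)\,d\xi\right\|_{L^2}\le C(\theta\lambda)^{-1/2}\exp\bigl(-e^{-C''/\nu}|\log\delta_0|\bigr).
\]
We relabel $\theta\lambda$ as $\lambda$ in the statement, using monotonicity of the norm in $\lambda$.

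\textbf{Step 4 (optimization).} By the triangle inequality the quantity to bound is at most
\[
C\sqrt\nu+C\exp\bigl(-e^{-C''/\nu}|\log\delta_0|\bigr).
\]
We choose $\nu=2C''/\log|\log\delta_0|$. Then $e^{-C''/\nu}|\log\delta_0|=|\log\delta_0|^{1/2}$, so the second term decays much faster than the first, and we obtain the bound $C/\sqrt{(\log|\log\delta_0|)_+}$. When $\log|\log\delta_0|$ is small, the bound follows from Step 1 by enlarging $C$.

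The main obstacle is Step 2, the $\nu$-uniform comparison in the non-conservative setting with merely $H^1$ data. The reaction term must be handled so that Gr\"onwall closes with constants independent of $\nu$. Nonnegativity of $K_2$ and $w$ and the $L^\infty$ bound (\ref{finftyinfty}) should make this work.
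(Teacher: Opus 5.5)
Your proposal is correct and follows essentially the same route as the paper: $\nu$-uniform bounds from Lemma \ref{linftyestf}, an $O(\sqrt{\nu})$ vanishing-viscosity estimate for $f^\nu-f$, then Lemma \ref{hlambdabound} applied to $h_T=\tau(T,w,f^\nu)-\tau(T,\tilde w,\tilde f^\nu)$ read off at $T=T_1$, and finally the choice $\nu\sim 1/\log|\log\delta_0|$ (the paper takes $p=e$ and keeps the $\|K_2\|_{L^2}$ factor separate, which is immaterial). One correction: testing the difference equation directly by $f^\nu-f$ in $L^2$ is legitimate, and it is exactly what the paper does, because $f\in L^\infty H^1$ with $V_1[f]$ Lipschitz gives $\partial_t f\in L^\infty L^2$; so the $\dot H^{-1}$ detour is unnecessary, and the terms you actually list in Step 2 are those of this $L^2$ energy estimate.
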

\begin{proof}
In the proof we closely follow the reasoning from the proof of \cite[Theorem 4.23]{JPS2025}.
Let us  denote by $f^\nu,\tilde{f}^\nu$ the solutions  to the system \eqref{meannu} with $
\nu  > 0$, initial data $f^0, \tilde{f}^0$ and weights $w,\tilde{w}$, respectively,  given by Proposition \ref{existenceprop}.
We will argue applying triangle inequality. In the first step, with a help of Lemma \ref{linftyestf}, we establish how close does $f(t,\cdot,\cdot)$ lie from $f^\nu(t,\cdot,\cdot)$ in terms of $\nu$ and $t$. In the second step, we compare $f^\nu$ and $\tilde f^\nu$ using Lemma~\ref{hlambdabound}. Finally, we combine the results and choose appropriate $\nu$ to obtain the claim.

At first, we observe that the difference $f^\nu-f$ satisfies the following equation in a weak sense.
\begin{align*}
 & \partial_t ( f^\nu- f)+\divv\left( ( f^\nu- f)V_1[f^\nu]\right)
     +\divv(  f V_1[f^{\nu}-f]) -\nu\,\Delta ( f^\nu- f)  \\
 & =\nu\,\Delta  f  + A( f^\nu- f) - ( f^\nu- f)V_2[f^\nu]
   - fV_2[f^\nu-f].    
\end{align*}
 Testing the equation with $(f^{\nu}-f)$ and recalling the definition of $V_1,V_2$ we get 
\begin{align*}
    &  \frac{d}{dt} \frac{1}{2}\,\| (f^\nu-f)(t, \cdot, \xi)\|_{L^2(\R^d)}^2 +{\nu}\,\int_{\R^d} |\nabla  (f^\nu-f) (t, x, \xi)|^2\,dx \\
 &   =\frac{1}{2}\ird\nabla(f^\nu-f)^2(t,x,\xi)\,\cdot\int_0^1\int_{\R^{d}}  K_1(x-y)\, f^\nu(t,y, \zeta) \,dy w(\xi, d\zeta)\,dx\\
&+ \ird\nabla(f^\nu-f)(t,x,\xi) f(t,x,\xi)\cdot \int_0^1\int_{\R^{d}}  K_1(x-y)\, (f^\nu-f)(t,y, \zeta) \,dy w(\xi, d\zeta)\,dx\\
&  - \nu \ird \nabla f(t,x,\xi)\cdot\nabla(f^\nu - f)(t,x,\xi)dx + A \ird (f^\nu - f)^2(t,x,\xi)dx  \\
&-\ird (f^\nu-f)^2(t,x,\xi)\,\int_0^1\int_{\R^{d}}  K_2(x-y)\, f^\nu(t,y, \zeta) \,dy w(\xi, d\zeta)\,dx\\
&-\ird f(t,x,\xi)(f^\nu - f)(t,x,\xi)\,\int_0^1\int_{\R^{d}}  K_2(x-y)\, (f^\nu-f)(t,y, \zeta) \,dy w(\xi, d\zeta)\,dx = \sum_{j=1}^6 I_j.
\end{align*}
Integration by parts gives
\begin{align*}
    I_1 &= -\frac{1}{2}\ird(f^\nu-f)^2(t,x,\xi)\,\int_0^1\int_{\R^{d}} \divv K_1(x-y)\, f^\nu(t,y, \zeta) \,dy w(\xi, d\zeta)\,dx,\\
    I_2 &=-  \ird(f^\nu-f)(t,x,\xi) \nabla f(t,x,\xi)\cdot \int_0^1\int_{\R^{d}}  K_1(x-y)\, (f^\nu-f)(t,y, \zeta) \,dy w(\xi, d\zeta)\,dx \\
    &- \ird(f^\nu-f)(t,x,\xi) f(t,x,\xi)\int_0^1\int_{\R^{d}} \divv K_1(x-y)\, (f^\nu-f)(t,y, \zeta) \,dy w(\xi, d\zeta)\,dx.
\end{align*}
Hence, applying Lemma \ref{impoest} together with H\"older inequality and Young inequality for convolution we arrive at
\begin{align*}
\vert I_1\vert&\leq \frac{1}{2}\Vert w\Vert\,\Vert \divv K_1\Vert_{L^\infty(\R^d)}\,\Vert f^\nu(t,\cdot,\cdot)\Vert_{L^\infty((0,1);L^1(\R^d))}\,\Vert  (f^\nu - f)(t,\cdot,\xi)\Vert_{L^2(\R^d)}^2,\\
\vert I_2\vert &\leq \Vert w\Vert\, \Vert  K_1\Vert_{L^2(\R^d)}\,\Vert f(t,\cdot,\cdot)\Vert_{L^\infty((0,1);H^1(\R^d))}\Vert (f^\nu-f)(t,\cdot,\cdot)\Vert_{L^{\infty}((0,1);L^2(\R^d))}^2\\ 
+& \norm{w}\Vert  \divv K_1\Vert_{L^1(\R^d)}\,\Vert f(t,\cdot,\cdot)\Vert_{L^\infty((0,1);L^{\infty}(\R^d))}\Vert (f^\nu-f)(t,\cdot,\cdot)\Vert_{L^{\infty}((0,1);L^2(\R^d))}^2,\\
\vert I_3\vert &\leq \frac{\nu}{2}\norm{\nabla(f^\nu - f)(t,\cdot,\xi)}^2_{L^{2}(\R^d)} + \frac{\nu}{2}\norm{f(t,\cdot,\xi)}_{H^{1}(\R^d)}^2,\\
\vert I_4\vert &\leq A \norm{(f^\nu - f)(t,\cdot,\xi)}^2_{L^{2}(\R^d)},\\
\vert I_5\vert &\leq \Vert w\Vert\,\Vert  K_2\Vert_{L^\infty(\R^d)}\,\Vert f^\nu(t,\cdot,\cdot)\Vert_{L^\infty((0,1); L^1(\R^d))}\,\Vert (f^\nu- f)(t,\cdot,\xi)\Vert_{L^2(\R^d)}^2,\\
\vert I_6\vert &\leq \Vert w\Vert\,\Vert  K_2\Vert_{L^1(\R^d)}\,\Vert f(t,\cdot,\xi)\Vert_{L^\infty(\R^d)}\,\Vert (f^\nu- f)(t,\cdot,\cdot)\Vert_{L^{\infty}((0,1);L^2(\R^d))}^2.
\end{align*}
Thus, by Lemma \ref{linftyestf} we obtain the estimate of the form
\[
\frac{d}{dt} \frac{1}{2}\,\| (f^\nu-f)(t, \cdot, \cdot)\|_{L^{\infty}(0,1;L^2(\R^d))}^2 \leq C \Vert (f^\nu- f)(t,\cdot,\cdot)\Vert_{L^{\infty}((0,1);L^2(\R^d))}^2 + \frac{\nu}{2}\norm{f(t,\cdot,\cdot)}_{L^{\infty}((0,1);H^{1}(\R^d))},
\]
where $C$ is a positive constant dependent on $A$ and the norms $\norm{w}$,  $\norm{K_1}_{W^{1,1}(\R^d)}$, $\norm{\divv K_1}_{L^{\infty}(\R^d)}, \norm{K_2}_{L^{1}\cap L^{\infty}(\R^d)}$ and $f^0,\tilde{f}^0$ in $L^{\infty}((0,1);L^1\cap L^\infty\cap H^1(\R^d))$. 
Applying Lemma \ref{linftyestf} also to the last term and using Gr\"onwall lemma we get
  \begin{equation}\label{aproxst}
\| (f^\nu - f)(t,\cdot,\cdot)\|_{L^{\infty}((0,1);L^2(\R^d))} \leq C(t)\,\sqrt{\nu},
  \end{equation}
for some continuous and non-decreasing function $C=C(t)\in \R_+$ that only depends on the same norms as $C$ above. Obviously, the same estimate may be obtained for $\tilde f^\nu -\tilde f$. 

Let us now compare $f^\nu$ and $\tilde{f}^\nu$. To that end, note that due to  Proposition \ref{hier},  $\tau(T, w, f^\nu)$ and $\tau(T, \tilde w,\tilde f^\nu)$ with $T\in \Tree$ solve the same Vlasov hierarchy \eqref{treeeq}. Hence, by linearity, $h=(h_T)_{T\in \Tree}$ defined by $h_T=\tau(T, w, f^\nu) - \tau(T, \tilde w,\tilde f^\nu)$ solves \eqref{hiergeneq}. Moreover, by Lemma~\ref{treeb} we may estimate
\begin{align*}
\Vert h_T(t,\cdot)\Vert_{L^2(\R^{d |T|})}&\leq \Vert \tau(T,w,f^\nu(t,\cdot))\Vert_{L^2(\R^{d |T|})}+\Vert \tau(T,w,f^\nu(t,\cdot))\Vert_{L^2(\R^{d |T|})}\\
&\leq \Vert w\Vert^{|T|-1}\,\Vert f^\nu(t,\cdot,\cdot)\Vert_{L^\infty((0,1); L^2(\R^d))}^{|T|}+\Vert \tilde w\Vert^{|T|-1}\,\Vert \tilde f^\nu(t,\cdot,\cdot)\Vert_{L^\infty((0,1); L^2(\R^d))}^{|T|}\\
& \leq \frac{w_{\text{max}}^T}{w_{\text{min}}}\left(\norm{f^\nu(t,\cdot,\cdot)}_{L^\infty((0,1); L^2(\R^d))}^{|T|} + \norm{\tilde{f}^\nu(t,\cdot,\cdot)}_{L^\infty((0,1); L^2(\R^d))}^{|T|} \right),
\end{align*}

where $w_{\text{min}}=\min\{\Vert w\Vert,\Vert \tilde w\Vert\}$ and $w_{\text{max}}=\max\{\Vert w\Vert,\Vert \tilde w\Vert\}$. 
Applying the estimate (\ref{flinftytwo}) with $p=2$ and introducing $f^0_{\text{max},p}:=\max\{f^0\Vert_{L^\infty((0,1);L^p(\R^d))},\norm{\tilde{f}^0}_{L^\infty((0,1);L^p(\R^d))} \}$, for $p \geq 1$ we arrive at
\[
\norm{h_T(t,\cdot,\cdot)}_{L^2(\R^{d |T|})}\leq \frac{2}{w_{\text{min}}}(w_{\text{max}}f^0_{\max,2})^{|T|} \exp\left(\left(\frac{w_{\text{max}}\Vert \divv K_1\Vert_{L^\infty(\R^d)}e^{At_*}\,f^0_{\max,1}}{2} + A\right)t|T|\right).
\]

Thus, we note that $\sup_{t\in [0,\ t_*]}\|h(t,\cdot)\|_\lambda<1$, for any $\lambda>0$ such that
\eqq{
\sqrt{\lambda}<\min\left\{\frac{1}{w_{\text{max}}},\frac{w_{\text{min}}}{2w_{\text{max}}}\right\}\frac{\exp\left(-(\frac{w_{\text{max}}}{2}\Vert \divv K_1\Vert_{L^\infty(\R^d)}e^{At_*}\,f^0_{\max,1} + A)t_*\right)}{f^0_{\max,2}},
}{lest}
where we applied 
\[
\left(\frac{w_{\text{min}}}{2}\right)^{\frac{1}{|T|}}\frac{1}{w_{\text{max}}} \geq \frac{w_{\text{min}}}{2 w_{\text{max}}} \hd \m{ if }  \hd w_{\text{min}} \leq 2, \hd \m{ and } \hd\left(\frac{w_{\text{min}}}{2}\right)^{\frac{1}{|T|}}\frac{1}{w_{\text{max}}} \geq \frac{1}{w_{\text{max}}} \hd \m{ if } \hd w_{\text{min}} > 2.
\]
  Take arbitrary $\lambda$ satisfying (\ref{lest}). Then, by Lemma~\ref{hlambdabound} for any $p>1$ and $\theta\in (0,2^{-p'}e^{-p'(2A+1)t_*})$ with $p'=\frac{p}{p-1}$ there exists $C$ dependent on $p,A,t_*,\theta$ such that 
  \[
\sup_{t\in [0,\ t_*]}\|h(t,\cdot)\|_{\theta\lambda}\leq C \,\exp\left(p^{-\frac{\left((2\nu)^{-1}\norm{K_1}_{L^{2}(\R^d)}^2  + \norm{K_2}_{L^{2}(\R^d)}^2\right)}{\theta\lambda}t}\,\log\Vert h(0,\cdot)\Vert_{\theta \lambda}\right).
  \]
Choosing particular $T=T_1$ we arrive at 
  \begin{align*}
   \|h(t,\cdot)\|_{\theta\lambda}&\geq (\theta\lambda)^{\frac{1}{2}} \|h_{T_1}(t,\cdot)\|_{L^2(\R^d)}=\sqrt{\lambda}\sqrt{\theta} \left\|\tau(T_1,w, f^\nu)(t,\cdot)-\tau(T_1,\tilde w,\tilde f^\nu)(t,\cdot)\right\|_{L^2(\R^d)}   \\
   & =\sqrt{\lambda}\sqrt{\theta}\,\left\| \int_0^1 ( f^\nu -\tilde f^\nu) (t,\cdot, \xi)\,d\xi\,\right\|_{L^2(\R^d)}
  \end{align*}
  and consequently for every $t \in (0,t_{*})$
  \[
\left\| \int_0^1 ( f^\nu -\tilde f^\nu) (t,\cdot, \xi)\,d\xi\,\right\|_{L^2(\R^d)} \leq \frac{C}{\sqrt{\lambda}\sqrt{\theta}}\,\exp\left(p^{-\frac{\left((2\nu)^{-1}\norm{K_1}_{L^{2}(\R^d)}^2  + \norm{K_2}_{L^{2}(\R^d)}^2\right)}{\theta\lambda}t}\,\log\Vert h(0,\cdot)\Vert_{\theta \lambda}\right).\]
Combining the estimate above with (\ref{aproxst}), we obtain, by the triangle inequality

\[
\left\| \int_0^1 ( f -\tilde f) (t,\cdot, \xi)\,d\xi\,\right\|_{L^2(\R^d)}  \leq \frac{C}{\sqrt{\lambda}\sqrt{\theta}}\,\exp\left(p^{-\frac{\left((2\nu)^{-1}\norm{K_1}_{L^{2}(\R^d)}^2  + \norm{K_2}_{L^{2}(\R^d)}^2\right)}{\theta\lambda}t}\,\log\Vert h(0,\cdot)\Vert_{\theta \lambda}\right)+2C(t)\,\sqrt{\nu}.
\]
In order to finish the proof, it remains to choose appropriate $\nu$. To simplify the calculations we take $p=e$ and we choose 
\[
\nu=\frac{\Vert K_1\Vert_{L^2(\R^d)}^2}{\theta\lambda}\,t_*\,(\log |\log \Vert h(0,\cdot)\Vert_{\theta\lambda}|)_+^{-1}.
\]
Then,
\begin{align}\label{fest7}
   \left\| \int_0^1 ( f -\tilde f) (t,\cdot, \xi)\,d\xi\,\right\|_{L^2(\R^d)}&\leq \frac{C}{\sqrt{\lambda}\sqrt{\theta}}\,\exp(e^{-\frac{t}{2t_*}(\log|\log\norm{h(0,\cdot)}_{\theta\lambda}|)_{+}}e^{-\frac{\norm{K_2}_{L^{2}(\R^d)}^2 }{\theta \lambda}t}\log \norm{h(0,\cdot)}_{\theta\lambda})  \\ \nonumber
   &+ 2 C(t)\frac{\Vert K_1\Vert_{L^2(\R^d)}}{\sqrt{\theta\lambda}}\,\sqrt{t_*}\,\frac{1}{\sqrt{(\log |\log \Vert h(0,\cdot)\Vert_{\theta\lambda}|)_+}}.
\end{align}
Note that $\lambda$ was chosen in such a way that $\norm{h(0,\cdot)}_{\lambda} < 1$. Since $\theta < 1$ also $\norm{h(0,\cdot)}_{\theta\lambda} < 1$, which gives $\log \norm{h(0,\cdot)}_{\theta\lambda}$ negative. Thus, we  estimate as follows
\begin{align*}
    &e^{-\frac{t}{2t_*}(\log|\log\norm{h(0,\cdot)}_{\theta\lambda}|)_{+}}\log \norm{h(0,\cdot)}_{\theta\lambda} \leq e^{-(\log|\log\norm{h(0,\cdot)}_{\theta\lambda}|^\frac{1}{2})_{+}}\log \norm{h(0,\cdot)}_{\theta\lambda}\\
    &= \frac{\log\norm{h(0,\cdot)}_{\theta\lambda}}{\abs{\log\norm{h(0,\cdot)}_{\theta\lambda}}^{\frac{1}{2}}} = -\abs{\log\norm{h(0,\cdot)}_{\theta\lambda}}^{\frac{1}{2}}.
\end{align*}   
Hence,
    \[
\exp\left(e^{-\frac{t}{2t_*}(\log|\log\norm{h(0,\cdot)}_{\theta\lambda}|)_{+}}e^{-\frac{\norm{K_2}_{L^{2}(\R^d)}^2 t}{\theta \lambda}}\log \norm{h(0,\cdot)}_{\theta\lambda}\right) \leq \exp\left(-\abs{\log\norm{h(0,\cdot)}_{\theta\lambda}}^{\frac{1}{2}} e^{-\frac{\norm{K_2}_{L^{2}(\R^d)}^2 t_*}{\theta \lambda}}\right).
\]
Note that for every $a > 1, b \in (0,1)$
\[
\exp(-\sqrt{a}b) \leq \frac{1}{\sqrt{2b} \sqrt{ \ln a}}
\]
Applying this inequality with
\[
a=\abs{\log\norm{h(0,\cdot)}_{\theta\lambda}}, \hd \hd b = e^{-\frac{\norm{K_2}_{L^{2}(\R^d)}^2 t_*}{\theta \lambda}}
\]
we finally obtain
\[
\exp(e^{-\frac{t}{2t_*}(\log|\log\norm{h(0,\cdot)}_{\theta\lambda}|)_{+}}e^{-\frac{\norm{K_2}_{L^{2}(\R^d)}^2 t}{\theta \lambda}}\log \norm{h(0,\cdot)}_{\theta\lambda}) \leq \frac{1}{\sqrt{2}}  e^{\frac{\norm{K_2}_{L^{2}(\R^d)}^2 t_*}{2\theta \lambda}} \frac{1}{\sqrt{(\log |\log \Vert h(0,\cdot)\Vert_{\theta\lambda}|)_+}} . 
\]
Inserting the above estimate in (\ref{fest7}) and noting that $C(t)$ is an increasing function of time we arrive at the claim of lemma.
\end{proof}

\end{subsection}

\begin{subsection}{Proof of the main result}
At last we are ready to prove  Theorem~\ref{maintheorem}. The proof closely follows the approach introduced in \cite{JPS2025}. The new ingredient is to show that the assumption (\ref{asf}) implies that $f_N^0$ satisfies the assumption $2.$ of Theorem~\ref{lg}, where
\[
f_N^0(x,\xi):=\sum_{i=1}^N \tilde{f}_i^0(x)\,\mathbb{I}_{[\frac{i-1}{N},\frac{i}{N})}(\xi), \hd \hd x\in \R^d,\xi\in [0,\ 1], \hd \tilde{f}_i^0:=X^{0}_{i\#}[M_i^0 d\p].
\]
Despite of that, we present the whole argument for the sake of completeness.
\begin{proof}[Proof of Theorem \ref{maintheorem}]
Let us define
\[
w_N(\xi,\zeta):=\sum_{i,j=1}^N N\,w^N_{ij}\,\mathbb{I}_{[\frac{i-1}{N},\frac{i}{N})}(\xi)\, \mathbb{I}_{[\frac{j-1}{N},\frac{j}{N})}(\zeta), \hd \hd \xi,\zeta\in [0,\ 1].
\]
Then, thanks to (\ref{aswb})
 $w_N$ satisfies the assumption $1.$ of Theorem~\ref{lg}. 
Indeed, we have
\[
\sup_{N \in \mathbb{N}} \sup_{\xi \in [0,1]}\int_0^1 |w_N(\xi,\eta)|d\eta \leq  \sup_{N \in \mathbb{N}}  \max_{1\leq i \leq N}\sum_{j=1}^N|w_{ij}^N| \leq C_w,
\]
\[
 \sup_{N \in \mathbb{N}} \sup_{\eta \in [0,1]}\int_0^1 |w_N(\xi,\eta)|d\xi \leq \sup_{N \in \mathbb{N}}  \max_{1\leq j \leq N}\sum_{i=1}^N|w_{ij}^N| \leq C_w.
\]
Let us show that $f_N^0$ satisfies the assumption $2.$ of Theorem \ref{lg}.
For any set $U \in \mathcal{B}(\R^d)$ we have
\[
\tilde{f}^0_i(U)= \ird I(\{x:X^0_{i}(x)\in U\})M^0_i(x)d\p(x)  = \int_{(X_i^0)^{-1}(U)} M^0_id\p 
= \int_{(X_i^0)^{-1}(U)} \mathbb{E}(M^0_i|X^0_i)d\p,
\]
where in the last equality we used the definition of conditional expectation. Denoting\\ $h_i(x) = \mathbb{E}(M_i^0|X_i^0(x)~=~x)$, we may write
\[
h_i(x) = \int_{\mathbb{R}}yf_{(X_i^0,M_i^0)}(x,y)dy\frac{1}{f_{X_i^0}(x)}
\]
and
\[
\tilde{f}^0_i(U)= \int_{(X_i^0)^{-1}(U)} h_i(X^{0}_i)d\p = \int_{U} h_i(x)d\p_{X^{0}_i} = \int_{U} \int_{\mathbb{R}}yf_{(X_i^0,M_i^0)}(x,y)dy\frac{1}{f_{X_i^0}(x)} f_{X_i^0}(x) dx = \int_U g_i(x)dx,
\]
where $g_i$ is defined in (\ref{asf}) and where in the last line we used the fact that $M_i^0$ are nonnegative random variables.
Hence, by the assumption (\ref{asf})
\[
\sup_{N \in \mathbb{N}}\norm{f^0_N}_{L^{\infty}((0,1);(W^{1,1}\cap W^{1,\infty})(\R^d))}\hspace{-0.1 cm}=\sup_{N \in \mathbb{N}} \max_{1\leq i \leq N}\norm{\tilde{f}^0_i}_{(W^{1,1}\cap W^{1,\infty})(\R^d)} \hspace{-0.2 cm}= \sup_{N \in \mathbb{N}} \max_{1\leq i \leq N}\norm{g_i}_{(W^{1,1}\cap W^{1,\infty})(\R^d)}:=C_0 < \infty.
\]


Thus, from Theorem~\ref{lg}, we infer that there exist $w\in L^\infty_\xi \mathcal{M}_\zeta\cap L^\infty_\zeta \mathcal{M}_\xi$, $f^0\in L^\infty((0,1); (W^{1,1}\cap W^{1,\infty})(\R^d))$ and a subsequence (still denoted by $N$) such that for all $T$
\[
\tau(T,w,f^0)=\lim_{N \to \infty} \tau(T,w_N,f_N^0) \hd \m{ in } \hd L^p_{loc}(\R^{d|T|}), \hd p \in [1,\infty).
\]
Our aim is to improve this convergence to hold globally in $\R^d$.
At first, note that,  by Lemma \ref{treeb}
\[
\norm{\tau(T,w,f^0)}_{(L^1\cap L^\infty)(\R^{d|T|})} \leq \norm{w}^{|T|-1}\norm{f^0}_{L^{\infty}((0,1);(L^1\cap L^\infty)(\R^d))}^{|T|} < \infty \hd \hd  \m{for every } T
\]
and 
\[
\sup_{N\in \mathbb{N}}\Vert \tau(T,w_N,f^0_N)\Vert_{(W^{1,1}\cap W^{1,\infty})(\R^{d|T|})}\leq C_w^{|T|-1}\, C_0^{\vert T\vert} < \infty.
\]

Fix $R>0$, a straightforward extension of Lemma~\ref{treeb} leads to
\eqq{
\|\tau(T,w_N,f^0_N)\|_{L^1(\R^{d\,|T|})\setminus B(0,R)^{|T|}}\leq \,\|w_N\|^{|T|-1}\,\|f^0_N\|_{L^\infty((0,1); L^1(\R^{d}))}^{|T|-1}\,\|f^0_N\|_{L^\infty((0,1); L^1(\R^{d}\setminus B(0,R)))}.
}{out}
On the other hand, by the assumptions (\ref{M}) and (\ref{secmo}) there exists a positive constant $C$ independent of $N$ such that
\[
\int_{\R^d} |x|^2\,df_N^0(x,\xi)
= \sum_{i=1}^N \ird |X^0_i(x)|^2 M^0_i(x)d\p(x) \,\mathbb{I}_{[\frac{i-1}{N},\frac{i}{N})}(\xi)
\leq  M \max_{1\leq i \leq N } \mathbb{E} |X^0_i|^2 \leq C.
\]

Inserting this result in (\ref{out}) leads to
\[
\|\tau(T,w_N,f^0_N)\|_{L^1(\R^{d\,|T|})\setminus B(0,R)^{|T|}}\leq \frac{C}{R^2}\,\,\|w_N\|\|f^0_N\|_{L^\infty((0,1); L^1(\R^{d}))}^{|T|-1} \leq \frac{C}{R^2}C_w^{|T|-1}C_0^{|T|-1}.
\]
Hence, the sequence $\tau(T,w_N,f^0_N)$ is equicontinuous and equitight in $L^{1}(\R^{d|T|})$.
Combining it with the local convergence in $L^1$, it implies that $\tau(T,w_N,f_N^0)$ converges strongly to $\tau(T,w,f^0)$ in  $L^1(\R^{d\,|T|})$. Since $\tau(T,w_N,f_N^0)$ is also bounded in $L^{\infty}(\R^{d|T|})$, applying the interpolation inequality we obtain that $\tau(T,w_N,f_N^0)$ converges strongly to $\tau(T,w,f^0)$ in $L^p(\R^{d\,|T|})$ for any $p\in [1,\infty)$ and in particular in $L^2(\R^{d\,|T|})$.
Besides, applying again Lemma~\ref{treeb}, we may estimate as follows
\[
\|\tau(T,w,f^0)\|_{L^2(\R^d)}\leq \|w\|^{|T|-1}\,\|f^0\|_{L^\infty((0,1); L^2(\R^d))}^{|T|}.
\]
Thus, there exists some $\lambda>0$ small enough such that
\begin{equation}
\|\tau(\cdot,w,f^0)-\tau(\cdot,w_N,f^0_N)\|_{\lambda}\to 0,\quad\mbox{as}\ N\to\infty.\label{finalconv}
  \end{equation}
By Proposition~\ref{existenceprop}, there exists a unique weak solution $f\in L^\infty((0,t_*)\times(0,1); ( W^{1,1}\cap W^{1,\infty})(\R^d))$ to \eqref{meannu} with $\nu=0$  associated with $w$.
Define 
\[
\bar{f}_N(t,x,\xi)=\sum_{i=1}^N \tilde f_i(t,x)\,\mathbb{I}_{[\frac{i-1}{N},\frac{i}{N})}(\xi),\qquad t\in (0,t_{*}),\;x\in \R^d,\;\xi\in [0,\ 1],
\]
where $\tilde{f}_i(t,x) = \xb_i(t)_{\#}[\Mb_i(t)d\p]$ and $(\xb_i,\Mb_i)_{i=1}^N$ is a solution to (\ref{part2}) given by Lemma \ref{exi}.  
We observe that $\bar{f}_N$ is a weak solution to \eqref{meannu} with $w_N$ and $\nu=0$, Moreover, from Lemma \ref{linftyestf} it follows that $\bar{f}_N\in L^\infty((0,t_*)\times(0,1);H^{1}(\R^d))$. Hence, both $\bar{f}_N$ and $f$ satisfy the assumptions of Lemma~\ref{mainstab}. Combining Lemma~\ref{mainstab} with \eqref{finalconv}, we obtain
\[
\left\|\int_0^1 (f-\bar{f}_N)(t,\cdot,\xi)\,d\xi\right\|_{L^2(\R^d)}\leq \frac{C}{{(\log |\log \|\tau(\cdot,w_N,f^0_N)-\tau(\cdot,w,f^0)\|_{\lambda}|)^{1/2}_+}}\to 0,\quad \mbox{as}\ N\to \infty.
\]
In order to show that the convergence holds also in $L^1(\R^d)$, note that for any $R>0$
\eqq{
\left\|\int_0^1 (f-\bar{f}_N)(t,\cdot,\xi)\,d\xi\right\|_{L^1(\R^d)}\hspace{-0.2cm}\leq \left\|\int_0^1 (f-\bar{f}_N)(t,\cdot,\xi)\,d\xi\right\|_{L^2(B_{R})} \hspace{-0.2cm}|B_{R}|^{\frac{1}{2}} + \frac{1}{R^2}\int_{\R^d \setminus B_R}|x|^2 \abs{\int_0^1 (f-\bar{f}_N)(t,\cdot,\xi)\,d\xi}dx.  
}{l1conv}
Recall that there exists $N$ independent constant $C$ such that
\[
\ird |x|^2 \izj \bar{f}_N(t,x,\xi)d\xi dx = \sn \ird |x|^2 \tilde{f}_i(t,x)dx \leq \max_{1\leq i\leq N}\ird |\xb_i(t)|\Mb_i(t) d\p \leq C,
\]
where in the last estimate we applied (\ref{Mbound})and  (\ref{secmobound}). On the other hand, by Fatou lemma
\[
\ird |x|^2 \izj f(t,x,\xi)d\xi dx \leq \liminf_{N\rightarrow \infty} \ird |x|^2 \izj \bar{f}_N(t,x,\xi)d\xi dx \leq C.
\]
Thus, passing to the limit with $N$ in (\ref{l1conv}) we obtain
\[
\lim_{N\rightarrow \infty}\left\|\int_0^1 (f-\bar{f}_N)(t,\cdot,\xi)\,d\xi\right\|_{L^1(\R^d)} \leq \frac{C}{R^2}
\]
for some constant $C>0$. Since $R$ is arbitrary we obtain the $L^1$ convergence and hence also convergence in flat norm: 
\[
d_F\left(\int_0^1 \bar{f}_N(t,\cdot,\xi)\,d\xi,\int_0^1 f(t,\cdot,\xi)\,d\xi\right)\to 0,\quad \mbox{as}\ N\to \infty.
\]
Since $\int_0^1 \bar{f}_N(t,x,\xi)\,d\xi=\frac{1}{N}\,\sum_{i=1}^N \tilde f_i(t,x)$, we conclude the proof of Theorem~\ref{maintheorem}, recalling \eqref{finalp} from Theorem \ref{mainprob}.
\end{proof}
\end{subsection}

\end{section}

\section{Appendix}

\subsection{Proof of Lemma \ref{odestab}}

We begin the Appendix with a proof of auxiliary result from Chapter 2.
\begin{proof}[Proof of Lemma \ref{odestab}]
Clearly, under the assumptions (\ref{asK}) and (\ref{Mz}) we have
\eqq{
\max_{1\leq i\leq N} \sup_{t \in (0,t_{*})}M_i(t) \leq Me^{At_{*}}.}
{cest}
Let us subtract the corresponding equations for $M_i$ and $\tilde{M}_i$, then
\begin{align*}
&\partial_t(M_{i}(t) - \tilde{M}_{i}(t)) 
=\\
&A(M_{i}(t)- \tilde{M}_i(t)) + \frac{1}{N}\left[\tilde{M}_i(t)\sum_{j=1}^N \tilde{M}_{j}(t)\tilde{w}^{N}_{ij}K_{2}(\tx_i(t) -\tx_{j}(t))-M_i(t)\sum_{j=1}^N M_{j}(t)w^{N}_{ij}K_{2}(\nx_i(t) -\nx_{j}(t))\right].
\end{align*}
Multiplying by $\frac{1}{N}(M_{i}(t)- \tilde{M}_i(t))$ and summing over $i$ we get
\begin{align*}
&\frac{1}{2}\frac{d}{dt} \frac{1}{N}\sum_{i=1}^N(M_{i}(t)- \tilde{M}_i(t))^2 = A\frac{1}{N}\sum_{i=1}^N(M_{i}(t)- \tilde{M}_i(t))^2 \\
&+ \frac{1}{N^2}\sum_{i,j=1}^N (M_i(t) - \tilde{M}_i(t)) \left[\tilde{M}_i(t)\tilde{M}_{j}(t)\tilde{w}^{N}_{ij}K_{2}(\tx_i(t) -\tx_{j}(t))-M_i(t) M_{j}(t)w^{N}_{ij}K_{2}(\nx_i(t) -\nx_{j}(t))\right]\\
&\leq \frac{A}{N}\sum_{i=1}^N(M_{i}(t)- \tilde{M}_i(t))^2  +  \frac{\norm{K_2}_{L^{\infty}(\R^d)}}{N^2}\sum_{i,j=1}^N \abs{M_i(t) - \tilde{M}_i(t)} \abs{\tilde{w}^{N}_{ij} - w^{N}_{ij}}\tilde{M}_i(t)\tilde{M}_{j}(t)\\
&+ \frac{1}{N^2}\sum_{i,j=1}^N (M_i(t) - \tilde{M}_i(t))w^{N}_{ij}\left[\tilde{M}_i(t)\tilde{M}_j(t)K_{2}(\tx_i(t) -\tx_{j}(t))-M_i(t) M_{j}(t)K_{2}(\nx_i(t) -\nx_{j}(t))\right]=:\sum_{l=1}^3I_l.
\end{align*}
We note that
\begin{align*}
I_1+I_2&\leq \frac{A}{N}\sum_{i=1}^N(M_{i}(t)- \tilde{M}_i(t))^2  + \frac{\norm{K_2}_{L^{\infty}(\R^d)}}{N^2}\left(\sum_{i,j=1}^N \abs{\tilde{w}^{N}_{ij} - w^{N}_{ij}}^2 + \sum_{i,j=1}^N\abs{M_{i}(t)- \tilde{M}_i(t)}^2\abs{\tilde{M}_i(t)\tilde{M}_{j}(t)}^2\right)\\
&
\leq \frac{\norm{K_2}_{L^{\infty}(\R^d)}}{N^2}\sum_{i,j=1}^N \abs{\tilde{w}^{N}_{ij} - w^{N}_{ij}}^2 + \frac{1}{N}\sum_{i=1}^N(M_{i}(t)- \tilde{M}_i(t))^2 \left(A + \norm{K_2}_{L^{\infty}(\R^d)}e^{4At_{*}}M^4\right),
\end{align*}
where in the last estimate we used (\ref{cest}). Let us decompose $I_3$ as follows.
\begin{align*}
&I_3 \leq \frac{1}{N^2}\sum_{i,j=1}^N  w^{N}_{ij}(M_i(t) - \tilde{M}_i(t))(\tilde{M}_i(t)-M_i(t))\tilde{M}_j(t)K_{2}(\tx_i(t) -\tx_{j}(t))\\
&+ \frac{1}{N^2}\sum_{i,j=1}^N  w^{N}_{ij}(M_i(t) - \tilde{M}_i(t))(\tilde{M}_j(t) - M_j(t))M_i(t)K_{2}(\tx_i(t) -\tx_{j}(t))\\
&+  \frac{1}{N^2}\sum_{i,j=1}^N  w^{N}_{ij}(M_i(t) - \tilde{M}_i(t)) M_i(t)\nc_j(t)\left(K_2(\tx_i(t) - \tx_j(t)) - K_2(\nx_i(t) - \nx_j(t))\right)=\sum_{l=1}^3I_3^l.
\end{align*}
We note that, since $K_2,w^N_{ij},M_i \geq 0$, $I_3^1$ is nonpositive.
Moreover, applying again (\ref{cest})   we have
\[
I_3^2 \leq \frac{M \norm{K_2}_{L^{\infty}(\R^d)}C_w}{N^2} e^{At_{*}}  \left(\sum_{i=1}^N \abs{\nc_i(t) - \tilde{M}_i(t)}\right)^2 
\leq M \norm{K_2}_{L^{\infty}(\R^d)} e^{At_{*}} C_w \frac{1}{N}\sum_{i=1}^N\abs{\nc_i(t) - \tilde{M}_i(t)}^2.
\]
Next, by (\ref{asK})
\[
I^3_3 \leq \frac{\norm{K_2}_{W^{1,\infty}(\R^d)}}{N^2}\sum_{i,j=1}^N  w^{N}_{ij}\abs{\nc_i(t) - \tilde{M}_i(t)}\nc_i(t)\nc_j(t)\left(\abs{\tx_i(t) - \nx_i(t)} +\abs{\tx_j(t) - \nx_j(t)}\right)
\]
 and  hence,
\[
I_3^3 \leq e^{2At_{*}} 
M^{2} \norm{K_2}_{W^{1,\infty}(\R^d)} C_w\frac{1}{N}\left(\sum_{i=1}^N\abs{\nc_i(t) - \tilde{M}_i(t)}^2 + \sum_{i=1}^N\abs{\nx_i(t) - \tx_i(t)}^2\right).
\]
Combing the estimates above we arrive at
\begin{align}\label{nac}
&\frac{1}{2}\frac{d}{dt} \frac{1}{N}\sum_{i=1}^N(\nc_{i}(t)- \tilde{M}_i(t))^2 \leq  \frac{\norm{K_2}_{L^{\infty}(\R^d)}}{N^2}\sum_{i,j=1}^N \abs{\tilde{w}^{N}_{ij} - w^{N}_{ij}}^2 +
\\ \nonumber
&\left(A + \norm{K_2}_{L^{\infty}(\R^d)}e^{4At_{*}}M^4  + C_w M \norm{K_2}_{L^{\infty}(\R^d)} e^{At_{*}} + e^{2At_{*}} 
M^{2} \norm{K_2}_{W^{1,\infty}(\R^d)} C_w\right)\frac{1}{N}\sum_{i=1}^N(\nc_{i}(t)- \tilde{M}_i(t))^2 
\\ \nonumber
&
+  e^{2At_{*}} 
M^{2} \norm{K_2}_{W^{1,\infty}(\R^d)} C_w\frac{1}{N} \sum_{i=1}^N |\nx_i(t) - \tx_i(t)|^2. 
\end{align}
Similarly we subtract the corresponding equations for $X_i$ and $\tilde{X}_i$
\[
\partial_t(\nx_i(t) - \tx_i(t)) = \frac{1}{N}\sum_{j=1}^N\left(\nc_j(t)w^N_{ij}K_1(\nx_i(t) - \nx_j(t)) - \tilde{M}_j(t)\tilde{w}^N_{ij}K_1(\tx_i(t) - \tx_j(t))\right)
\]
We multiply the identity above by $\frac{1}{N}(\nx_{i}(t)- \tx_i(t))$, sum over $i$ and perform the estimates as follows
\begin{align*}
&\frac{1}{2}\frac{1}{N}\frac{d}{dt}\sum_{i=1}^N (\nx_i(t) - \tx_i(t))^2 \leq \frac{1}{N^2}\sum_{i,j=1}^N \abs{\nx_i(t) - \tx_i(t)}\nc_j(t)K_1(X_i(t)-X_j(t))\abs{w^N_{ij}-\tilde{w}^N_{ij}}\\
&+\frac{1}{N^2}\sum_{i,j=1}^N\abs{\nx_i(t) - \tx_i(t)} \tilde{w}^N_{ij}\abs{\nc_j(t) K_1(\nx_i(t) - \nx_j(t))- \tilde{M}_j(t)K_1(\tx_i(t) - \tx_j(t))}\\
&\leq e^{At_{*}} M\norm{K_1}_{L^{\infty}(\R^d)}\left(\frac{1 }{N}\sum_{i=1}^N \abs{\nx_i(t) - \tx_i(t)}^2  +  \frac{1}{N^2}\sum_{i,j=1}^N\abs{w^N_{ij}-\tilde{w}^N_{ij}}^2\right)\\
&+ \frac{\norm{K_1}_{W^{1,\infty}(\R^d)}}{N^2}\sum_{i,j=1}^N\abs{\nx_i(t) - \tx_i(t)} \tilde{w}^N_{ij}\left(\abs{\nc_j(t) - \tilde{M}_j(t)} + \tilde{M}_j(t)\abs{\nx_i(t) - \tx_i(t)} +\tilde{M}_j(t)\abs{\nx_j(t) - \tx_j(t)} \right). 
\end{align*}
Note that
\[
\frac{1}{N^2}\sum_{i,j=1}^N\tilde{w}^N_{ij}\abs{\nx_i(t) - \tx_i(t)} \abs{\nc_j(t) - \tilde{M}_j(t)} \leq C_{\tilde{w}}\frac{1}{N}\left( \sum_{i=1}^N \abs{\nx_i(t) - \tx_i(t)}^2  + \sum_{i=1}^N \abs{\nc_i(t) - \tilde{M}_i(t)}^2\right),
\]
and by (\ref{cest}) 
\[
\frac{1}{N^2}\sum_{i,j=1}^N\tilde{w}^N_{ij}\tilde{M}_j(t)\abs{\nx_i(t) - \tx_i(t)}^2  \leq e^{At_{*}}M C_{\tilde{w}}\frac{1}{N}\sum_{i=1}^{N}\abs{\nx_i(t) - \tx_i(t)}^2. 
\]
At last, 
\[
\frac{1}{N^2}\sum_{i,j=1}^N\tilde{w}^N_{ij}\tilde{M}_j(t)\abs{\nx_i(t) - \tx_i(t)}\abs{\nx_j(t) - \tx_j(t)} \leq e^{At_{*}}MC_{\tilde{w}}\frac{1}{N}\sum_{i=1}^{N}\abs{\nx_i(t) - \tx_i(t)}^2.
\]
Combining the estimates above we arrive at
\begin{align}\label{nax}
&\frac{1}{2}\frac{1}{N}\frac{d}{dt}\sum_{i=1}^N \abs{\nx_i(t) - \tx_i(t)}^2 \leq \frac{e^{At_{*}}M \norm{K_1}_{L^{\infty}(\R^d)}}{N^2}\sum_{i,j=1}^N\abs{w^N_{ij}-\tilde{w}^N_{ij}}^2\\ \nonumber
&+C_{\tilde{w}}\norm{K_1}_{W^{1,\infty}(\R^d)}\frac{1}{N}\sum_{i}^N\abs{\nc_i(t) - \tilde{M}_i(t)}^2\\ \nonumber
&+ \left( e^{At_{*}}M\left(2C_{\tilde{w}}\norm{K_1}_{W^{1,\infty}(\R^d)}+\norm{K_1}_{L^{\infty}(\R^d)}\right) +C_{\tilde{w}}\norm{K_1}_{W^{1,\infty}(\R^d)}\right)\frac{1}{N}\sum_{i=1}^N \abs{\nx_i(t) - \tx_i(t)}^2. 
\end{align}
Adding (\ref{nac}) and (\ref{nax}), and applying Gr\"onwall lemma, we obtain the claim.
\end{proof}

\subsection{Proof of Lemma \ref{exi}}

In this subsection we give a proof of Lemma \ref{exi}.
\begin{proof}[Proof of Lemma \ref{exi}]
Fix $N \in \mathbb{N}$. We will prove the unique existence of the solutions to (\ref{part2}), since the proof for (\ref{part}) is analogous. At first, we show that there exists exactly one solution to (\ref{part2}) in the space $(E,d_{\alpha})$ for $\al >0$ big enough, where
\begin{align*}
   E:=\Bigg\{&(\bar{X},\bar{M}) = ((\xb_i)_{i=1}^N,(\Mb_i)_{i=1}^N),\hd  \xb_i,\Mb_i \in C([0,t_*];L^{1}(\R^d,\mathbb{P})), \\
 &\Mb_i(t) \geq 0, \hd \max_{1\leq i\leq N}\sup_{t\in (0,t_*)} \Mb_i(t) \leq Me^{At_*}\hd  \hd \p \hd  a.e., \hd \xb_i(0)=X^0_i,\hd  \Mb_i(0) = M^0_i \Bigg\}  
\end{align*}
and
\[
d_{\alpha}((\xb^1,\Mb^1),(\xb^2,\Mb^2)) :=  \sup_{t\in (0,t_*)}e^{-\alpha t} \left(\max_{1\leq i \leq N} \E|\xb^1_i(t)-\xb^2_i(t)| + \max_{1\leq i \leq N}\E|\Mb^1_i(t)-\Mb^2_i(t)|\right). 
\]
Note that $(E,d_{\alpha})$ is a complete metric space. In order to apply the Banach fixed point theorem we define an operator $G:E\rightarrow E$ as
\begin{align*}
&(G(\xb,\Mb))_{i=1}^N=\\
&\left(X^0_i + \sum_{j=1}^N w^N_{ij} \int_0^{t}\ird K_{1}(\xb_i(\tau)-y)\tilde{f}_j(\tau,dy)d\tau, M^{0}_i\exp\left[\int_0^t \left(A-\sum_{j=1}^N w^N_{ij}\ird K_{2}(\xb_i(\tau)-y)\tilde{f}_j(\tau,dy)\right) d\tau \right]\right)_{i=1}^N.
\end{align*}
Due to the imposed assumptions, we see that, indeed the image of $E$ under the operator $G$ is included in $E$.
Let us show that $G$ is a contraction on $(E,d_{\alpha})$ for properly chosen $\alpha>0$. Take $(\xb^1,\Mb^1), (\xb^2,\Mb^2) \in E$ and denote 
$\tilde{f}^1_i(t,x) = \xb^1_i(t,x)_{\#}[\Mb^1_i(t,x)d\mathbb{P}(x)], \hd \tilde{f}^2_i(t,x) = \xb^2_i(t,x)_{\#}[\Mb^2_i(t,x)d\mathbb{P}(x)]$.
Then,
\begin{align} \label{firsteq}
&\E\abs{X^0_i + \sum_{j=1}^N w^N_{ij} \int_0^{t}\ird K_{1}(\xb_i^1(\tau)-y)\tilde{f}^1_j(\tau,dy)d\tau - X^0_i - \sum_{j=1}^N w^N_{ij} \int_0^{t}\ird K_{1}(\xb_i^2(\tau)-y)\tilde{f}^2_j(\tau,dy)d\tau}\\ \nonumber
    \leq &\E\abs{\sum_{j=1}^N w^N_{ij} \int_0^{t}\ird [K_{1}(\xb_i^1(\tau)-y)-K_{1}(\xb_i^2(\tau)-y)]\tilde{f}^1_j(\tau,dy)d\tau}\\ \nonumber
    + &\E\abs{ \sum_{j=1}^N w^N_{ij} \int_0^{t}\ird K_{1}(\xb_i^2(\tau)-y)(\tilde{f}^1_j(\tau,dy)-\tilde{f}^2_j(\tau,dy))d\tau}\\ \nonumber
    \leq & \norm{K_{1}}_{W^{1,\infty}(\R^d)} \sum_{j=1}^N w^N_{ij} \E\left[\int_0^{t}\abs{ \xb_i^1(\tau)-\xb_i^2(\tau)}\ird\Mb^{1}(\tau,y)d\p(y)d\tau\right]\\ \nonumber
+&\norm{K_{1}}_{W^{1,\infty}(\R^d)} \sum_{j=1}^N w^N_{ij} \int_0^{t}\left(\ird\abs{\Mb_j^1(\tau,y) - \Mb_j^2(\tau,y)}d\p(y) + \ird \Mb^2_j(\tau,y)\abs{\xb_j^1(\tau,y) - \xb_j^2(\tau,y)}d\p(y)\right) d\tau\\ \nonumber
\leq &\norm{K_{1}}_{W^{1,\infty}(\R^d)}  C_w \left(2Me^{At_*}\int_0^t \max_{1\leq i \leq N} \E|\xb^1_i(\tau) - \xb^2_i(\tau)| d\tau+\int_0^t \max_{1\leq i \leq N} \E|\Mb^1_i(\tau) - \Mb^2_i(\tau)| d\tau\right), 
\end{align}
where in the last estimate we used the assumption (\ref{aswb}) and the properties of the set $E$.
Moreover, again from the definition of $E$ and since for $k=1,2$ the expression $\sum_{j=1}^N w^N_{ij}\ird K_{2}(\xb^k_i(\tau)-y)\tilde{f}^k_j(\tau,dy)$ is nonnegative ( due to the assumptions (\ref{asK}), (\ref{aswl})), using properties of exponent we may estimate as follows
\begin{align*}
  &\E \Bigg| M^{0}_i\exp\left[\int_0^t \left(A-\sum_{j=1}^N w^N_{ij}\ird K_{2}(\xb^1_i(\tau)-y)\tilde{f}^1_j(\tau,dy)\right) d\tau \right] \\
& - M^{0}_i\exp\left[\int_0^t \left(A-\sum_{j=1}^N w^N_{ij}\ird K_{2}(\xb^2_i(\tau)-y)\tilde{f}^2_j(\tau,dy)\right) d\tau \right] \Bigg| \\
&\leq M e^{At_*} \E \int_0^t \sum_{j=1}^N
w_{ij}^N\abs{\ird K_2(\xb^2_i(\tau) - y)\tilde{f}^2_j(\tau,dy) -\ird K_2(\xb^1_i(\tau) - y)\tilde{f}^1_j(\tau,dy) }d\tau =: (*)
\end{align*}
Adding and subtracting the mixed term $\ird K_{2}(\xb^2_i(\tau,x) - \xb^2_j(\tau,y))\Mb^1_j(\tau,y)d\p(y)$, by triangle inequality we obtain
\begin{align*}
  &\abs{\ird K_2(\xb^2_i(\tau,x) - y)\tilde{f}^2_j(\tau,dy) -\ird K_2(\xb^1_i(\tau,x) - y)\tilde{f}^1_j(\tau,dy) }\\
 &\leq 
\norm{K_2}_{W^{1,\infty}(\R^d)} \left[\ird \Big(\abs{\xb^2_i(\tau,x) - \xb^1_i(\tau,x)} +  \abs{\xb^2_j(\tau,y) - \xb^1_j(\tau,y)}\Big)\Mb^2_j(\tau,y)d\p(y) + \E\abs{\Mb^2_j(\tau) - \Mb^1_j(\tau)}\right]. 
\end{align*}
Hence, again by (\ref{aswb}) and the definition of $E$ we get
\begin{align}\label{secondeq}
(*) \leq & 2\norm{K_2}_{W^{1,\infty}(\R^d)}M^2e^{2At_*}C_w \int_0^t \max_{1\leq i \leq N}\E\abs{\xb_i^2(\tau) - \xb_i^1(\tau)} d\tau \\ \nonumber
+&\norm{K_2}_{W^{1,\infty}(\R^d)}Me^{At_*}C_w \int_0^t \max_{1\leq i \leq N}\E\abs{\Mb_j^2(\tau) - \Mb_j^1(\tau)}d\tau\\ \nonumber
\leq &C(A,M,t_*,C_w,\norm{K_2}_{W^{1,\infty}(\R^d)} )\int_0^{t}\max_{1\leq i \leq N}\E \abs{\xb_i^2(\tau) - \xb_i^1(\tau)} + \max_{1\leq i \leq N}\abs{\Mb_i^2(\tau) - \Mb_i^1(\tau)}d\tau.
\end{align}
Combining (\ref{firsteq}) and (\ref{secondeq}) we obtain that there exists $C$ dependent only on $A,M,t_*,C_w,\norm{K_1}_{W^{1,\infty}(\R^d)}, \norm{K_2}_{W^{1,\infty}(\R^d)}$ such that
\begin{align*}
    d_{\alpha}(G(\xb^1,\Mb^1), G(\xb^2,\Mb^2)) & \leq C \sup_{t\in (0,t_*)}e^{-\alpha t}\int_{0}^{t}\max_{1\leq i \leq N}\E \abs{\xb_i^2(\tau) - \xb_i^1(\tau)} + \max_{1\leq i \leq N}\abs{\Mb_i^2(\tau) - \Mb_i^1(\tau)}d\tau \\
    &\leq \frac{C}{\alpha} d_{\alpha}((\xb^1,\Mb^1), (\xb^2,\Mb^2)).
\end{align*}
Choosing $\alpha = 2 C$ we obtain that $G$ is a contraction on $(E,d_{2C})$. Hence, by the Banach fixed point theorem there exists exactly one solution $(\xb_i,\Mb_i)_{i=1}^N \in E$ which satisfies
\[
\xb_i(t) = X^0_i + \sum_{j=1}^N w^N_{ij} \int_0^{t}\ird K_{1}(\xb_i(\tau)-y)\tilde{f}_j(\tau,dy)d\tau, 
\]
\[
\Mb_i(t) = M^{0}_i\exp\left[\int_0^t \left(A-\sum_{j=1}^N w^N_{ij}\ird K_{2}(\xb_i(\tau)-y)\tilde{f}_j(\tau,dy)\right) d\tau \right] \hd \m{ for } \hd i = 1 , \dots, N.
\]
Since $(\xb_i,\Mb_i)_{i=1}^N \in E$ the right hand sides in the system above belong actually to $C^{1}([0,t_*];L^{1}(\R^d,\p))$. Thus, also $\xb_i,\Mb_i \in C^{1}([0,t_*];L^{1}(\R^d,\p))$ and differentiating the equations we arrive at (\ref{part2}).
It remains to show the independence. Note that $\tilde{f}_i(t)$ for every $t\geq 0$ and $i=1,\dots,N$ is a deterministic measure. Indeed, for any $U \in \mathcal{B}(\mathbb{R}^d)$ we have
\[
\tilde{f}_i(t)(U) = \ird I(\{x:\xb_{i}(t,x)\in U\})\Mb_i(t,x)d\p(x) = \E \left(\Mb_i(t)I_U(\xb_i(t))\right).
\]
Hence, we may rewrite (\ref{part2})$_{(i)}$ as $\partial_t \xb_i(t)=g_i(t,\xb_i(t))$, where, due to (\ref{asK}) and (\ref{aswl}) function $g_i(t,x)$ is Lipschitz continuous in $x$, continuous in $t$ and bounded. Thus, there exists a flow $h_i:([0,t_{*});\R^d)$ continuous in time and Lipschitz in space such that $\xb_i(t) = h_i(t,X^0_i)$. Similarly, $\Mb_i(t)$ satisfies $\Mb_i(t)=M_i^0\exp(\int_0^t\tilde{g}_i(s,\xb_i(s))ds)$, with $\tilde{g}_i(t,x)$ also Lipschitz continuous in $x$, continuous in $t$ and bounded. Thus, $\Mb_i(t)=M_i^0\exp(\int_0^t\tilde{g}_i(s,h_i(s,X^0_i))ds)$. All in all, we may write 
\[
(\xb_i(t),\Mb_i(t)) = \Phi^i_t(X^0_i,M^0_i), \hd \hd \Phi^i_t(x,m):= \left(h_i(t,x),m\exp(\int_0^t\tilde{g}_i(s,h_i(s,x))ds)\right)
\]
and $\Phi^i_t:\R^d \times \R_{+}$ is a deterministic measurable map. Thus, $(\xb_i(t),\Mb_i(t))$ and $(\xb_j(t),\Mb_j(t))$ are independent for any $t \in (0,t_{*})$ for any $i\neq j$, which finishes the proof.
\end{proof}

\subsection{The extension of Glivenko-Cantelli lemma}

In this section we give a proof of Lemma \ref{GC}. We mimic the classical proof for empirical distribution from \cite{Dudley}, however we deal with weighted and not probabilistic measure, which demands careful modifications. At first we establish the following covering lemma (cf. \cite[Proposition 3.4.]{Dudley}).

\begin{lemma}\label{cover}
For any measure $\mu$ on $\R^d$, let us denote by $\mathcal{N}(\mu,\ve,\delta)$ the minimal number of disjoint sets of diameter not greater than $2\ve$, which cover $\mathbb{R}^d$ except the set A with $\mu(A) \leq \delta$.
Then, under the assumptions of Lemma \ref{GC}, there exists $K=K(C,M,d) > 0$ such that for every $k\geq 2+\frac{3}{2}d$, every $\ve \in (0,\min\{1,\bar{m}\})$ and every $N >1$ there holds $\mathcal{N}(\nu_N,\ve,\ve^{\frac{k}{k-2}}) \leq K \ve^{-k}$, (i.e. there exists  $K > 0$ such that for every $k\geq 2+\frac{3}{2}d$,  $\ve \in (0,\min\{1,\bar{m}\})$ and every $N >1$ there exists $A_N^k$ with $\nu_{N}(A_N^k) \leq \ve^{\frac{k}{k-2}}$ such that the minimal number of disjoint sets of diameter not greater than $2\ve$ which cover $\mathbb{R}^{d} \setminus A_N^k$ is not greater than $K \ve^{-k}$). 
\end{lemma}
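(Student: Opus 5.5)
The plan is to follow the classical covering argument of \cite[Proposition 3.4]{Dudley}: first exclude a set of small $\nu_N$-mass, then cover the rest by a grid of small cubes. The two inputs are a uniform bound on the total mass of $\nu_N$ and a uniform second-moment bound. By (\ref{mas}), $\nu_N(\R^d)=\sn \E M_i \leq M$. Moreover
\[
\int_{\R^d}|x|^2\,\nu_N(dx)=\sn \E\left[M_i|X_i|^2\right]\leq M\,C
\]
by (\ref{xas}). Both bounds are independent of $N$, so every subsequent constant will depend only on $C,M,d$.

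\textbf{Step 1: excluding the exterior of a ball.} Fix $R>0$ and set $A:=\R^d\setminus B(0,R)$. Chebyshev's inequality for the finite measure $\nu_N$ gives $\nu_N(A)\leq MC/R^2$. To make this at most $\ve^{\frac{k}{k-2}}$ we choose
\[
R:=\sqrt{MC}\,\ve^{-\frac{k}{2(k-2)}}.
\]

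\textbf{Step 2: covering the ball.} We cover $B(0,R)\subset[-R,R]^d$ by disjoint half-open cubes of side $2\ve/\sqrt d$, each of which has diameter at most $2\ve$. The number of cubes needed is at most
\[
\left(\frac{R\sqrt d}{\ve}+1\right)^d\leq c(d)\,\max\{R,1\}^d\,\ve^{-d},
\]
where we use $\ve<1$. Substituting the choice of $R$ yields a bound of order
\[
K_0(C,M,d)\,\ve^{-d-\frac{dk}{2(k-2)}}.
\]

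\textbf{Step 3: comparison of exponents.} It remains to check that $\ve^{-d-\frac{dk}{2(k-2)}}\leq\ve^{-k}$ for $\ve<1$, that is,
\[
d+\frac{dk}{2(k-2)}\leq k .
\]
For $k\geq 2+\frac32 d$ we have $k-2\geq\frac32 d$, hence $\frac{k}{k-2}=1+\frac{2}{k-2}\leq 1+\frac{4}{3d}$. Therefore
\[
d+\frac{dk}{2(k-2)}\leq d+\frac d2+\frac23=\frac32 d+\frac23\leq 2+\frac32 d\leq k,
\]
which is the required inequality. The hypothesis $\ve<\bar m$ is not needed for the covering itself. I expect it is used later, in the proof of Lemma \ref{GC}, to compare with normalized measures; here it only restricts the admissible range of $\ve$. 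Setting $K:=\max\{K_0,1\}$, we obtain $\mathcal N(\nu_N,\ve,\ve^{\frac{k}{k-2}})\leq K\ve^{-k}$ uniformly in $N$.

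The main obstacle is Step 3: the exponent bookkeeping has to close uniformly for every $k$ above the threshold $2+\frac32 d$. The point to verify carefully is that the mass bound $M$ and the second-moment bound enter only through $R$, and so only through the constant $K$, never through the exponent.
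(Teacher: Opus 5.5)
Your proof is correct and follows essentially the same route as the paper. The second moment $\int|x|^2\,d\nu_N\le CM$ confines all but $\varepsilon^{k/(k-2)}$ of the mass of $\nu_N$ to a ball of radius $R\sim\sqrt{CM}\,\varepsilon^{-k/(2(k-2))}$. That ball is covered by at most a constant times $(R/\varepsilon)^d$ disjoint sets of diameter at most $2\varepsilon$, and your exponent check in Step 3 is exactly the paper's inequality (\ref{kdin}).

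The differences are cosmetic. You fix the radius directly by Chebyshev, whereas the paper selects an adaptive radius $r_{N,k}$ using $\nu_N(\mathbb{R}^d)\ge\bar m$; this is why you never need $\varepsilon<\bar m$. You also cover the ball with a grid of cubes instead of balls around a maximal $\varepsilon$-separated set.
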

\begin{proof}
Let us choose arbitrary $k > \max\{2,d\}$, such that
\eqq{
\frac{k}{2(k-2)} \leq \frac{k-d}{d}.
}{kdin}
Note that this inequality is satisfied for every  $k\geq 2 + \frac{3}{2}d$. Indeed  for $k > \max\{2,d\}$ the above inequality is equivalent with $2k^2-(4+3d)k+4d \geq 0$.
Then $\Delta = (4+3d)^2 - 32d >0$ for all $d \geq 1$ and the inequality is fulfilled by any $k$ satisfying
\[
\frac{(4+3d) + \sqrt{(4+3d)^2 - 32d}}{4} \leq \frac{2}{4}(4+3d)  =2 + \frac{3}{2}d \leq k. 
\]
Using (\ref{xas}) and (\ref{mas}) we have
\[
\ird |x|^2d\nu_N = \sn \ird|X_i(x)|^2 M_i(x)d\mathbb{P}(x) \leq CM.
\]
Let us fix $\ve \in (0,\min\{1,\bar{m}\})$ and choose  $r_{N,k}$ such that 
\[
\nu_N(B_{r_{N,k}}) > \nu_N(\mathbb{R}^d) - \ve^{\frac{k}{k-2}} \geq \nu_N(B_\frac{{r_{N,k}}}{2}),
\]
which is possible since due to assumption (\ref{mas}) we have $\nu_N(\mathbb{R}^d) \geq \bar{m}$
Then $\nu_N(\mathbb{R}^d \setminus B_{r_{N,k}}) < \ve^{\frac{k}{k-2}}$ and $\nu_N(\mathbb{R}^d \setminus B_{\frac{r_{N,k}}{2}}) \geq \ve^{\frac{k}{k-2}}$.
Thus, such choice of $r_{N,k}$ gives
\[
CM \geq \ird |x|^2d\nu_N  \geq \left(\frac{r_{N,k}}{2}\right)^2 \int_{\mathbb{R}^d \setminus B_{\frac{r_{N,k}}{2}}} d\nu_N \geq \left(\frac{r_{N,k}}{2}\right)^2  \ve^{\frac{k}{k-2}}.
\]
Hence, applying inequality (\ref{kdin})
\[
r_{N,k
}\leq 2\sqrt{CM} \ve^{-\frac{k}{2(k-2)}} \leq 2\sqrt{CM} \ve^{-\frac{k-d}{d}}. 
\]
Let $q_N^k$ denote the maximal number of points in $B_{r_{N,k}}$ which are separated for each other by the distance greater than $\ve$ and let us denote by $Q_N^k$ the set of such points (notice that the choice of $Q_N^k$ is not  unique). 
 Then, $B_{r_{N,k}} \subset \cup_{x\in Q_N^k}B_{\ve}(x)$.
 On the other hand, since the balls centered in points belonging to $Q_N^k$ with radius $\ve/2$ are disjoint we have
\[
q_N^k \left(\frac{\ve}{2}\right)^d \leq (r_{N,k}+\ve)^d. 
\]
Summing up, 
\[
\mathcal{N}(\nu_n,\ve,\ve^{\frac{k}{k-2}}) \leq q_N^k \leq  2^d(1+r_{N,k}\ve^{-1})^d \leq 2^d(1+2\sqrt{CM}\ve^{-\frac{k}{d}})^d \leq K \ve^{-k}, 
\]
where $K =K(C,M,d)$.
\end{proof}
Now we are ready to give a proof of Lemma \ref{GC}.
\begin{proof}[Proof of Lemma \ref{GC}]
At first we note that for any $U\in \mathcal{B}(\mathbb{R}^d)$
\eqq{
\mathbb{E}\mu_N(U) = \sn \ird M_i(x) I_{\{x:X_i(x) \subset U\}}d\mathbb{P}(x)
=\sn\ird I_{\{x \subset U\}} d\tilde{f}_i(x) = \int_{U}d\nu_{N} = \nu_{N}(U).
}{wo}
Furthermore,
\[
\mathbb{E}(\mu_N(U))^2 = \ird \hspace{-0.1cm}\left(\sn  M_i(x) I_{\{x:X_i(x) \subset U\}}\right)^2\hspace{-0.2cm} d\mathbb{P}(x)
=\frac{1}{N^2}\hspace{-0.1cm}\ird \sum_{i,j=1}^NM_i(x)I_{\{x:X_i(x) \subset U\}}M_j(x)I_{\{x:X_j(x) \subset U\}}d\mathbb{P}(x).
\]
For fixed Borel measurable set $U$ define the random variable $Y_i^U:=M_iI_{\{x:X_i(x) \subset U\}}$. Then we may rewrite it as $Y_i^U = \Phi((M_i,X_i))$ with Borel function $\Phi:\R \times \R^d \rightarrow \R$ defined by $\Phi((m,x)):=m I_U(x)$.
Since by the assumption for $i \neq j$ $(M_i,X_i)$ and $(M_j,X_j)$ are independent, then also $Y^U_i$ and $Y^U_j$ are independent for $i \neq j$. Thus, we may continue as follows
\begin{align*}
   \mathbb{E}(\mu_N(U))^2 &=
\frac{1}{N^2}\sum_{i=1}^N\sum_{j\neq i}^N \ird Y^U_i(x)d\mathbb{P}(x)\ird Y^U_j(x)d\mathbb{P}(x) +\frac{1}{N^2}\sum_{i=1}^N \ird M_i^2(x)I_{\{x:X_i(x) \subset U\}}d\mathbb{P}(x)\\
&= \frac{1}{N^2}\sum_{i=1}^N\sum_{j\neq i}^N d\tilde{f}_i(U)d\tilde{f}_j(U) + \frac{1}{N^2}\sum_{i=1}^N \ird M_i^2(x)I_{\{x:X_i(x) \subset U\}}d\mathbb{P}(x)\\
&= (\nu_N(U))^2 - \frac{1}{N^2}\sum_{i=1}^N (d\tilde{f}_i(U))^{2}+ \frac{1}{N^2}\sum_{i=1}^N \ird M_i^2(x)I_{\{x:X_i(x) \subset U\}}d\mathbb{P}(x).
\end{align*}
From the calculations above (\ref{wo}) and assumption (\ref{mas}) we obtain
\begin{align*}
\mathbb{E}|\mu_N(U) - \mathbb{E}\mu_N(U)|^2 &= \mathbb{E}(\mu_N(U))^{2} - (\mathbb{E}\mu_N(U))^2
= \frac{1}{N^2}\sum_{i=1}^N \ird M_i^2(x)I_{\{x:X_i(x) \subset U\}}d\mathbb{P}(x)- \frac{1}{N^2}\sum_{i=1}^N (d\tilde{f}_i(U))^{2}\\
& \leq \frac{1}{N^2}\sum_{i=1}^N \ird M_i^2(x)I_{\{x:X_i(x) \subset U\}}d\mathbb{P}(x) \leq \frac{M \nu_N(U)}{N}.
\end{align*}
Hence, taking a disjoint finite sum $U = \cup_{j=1}^{m}U_j$, applying firstly H\"older's inequality and subsequently Jensen inequality we arrive at
\[
\mathbb{E}\sum_{j=1}^m|(\mu_N - \nu_N)(U_j)| \leq
\mathbb{E}\left(\sum_{j=1}^m|(\mu_N - \nu_N)(U_j)|^2\right)^{\frac{1}{2}}m^{\frac{1}{2}} 
\leq \left(\mathbb{E}\sum_{j=1}^m|(\mu_N - \nu_N)(U_j)|^2\right)^{\frac{1}{2}}m^{\frac{1}{2}} 
\]
\eqq{
\leq \left(\sum_{j=1}^m\frac{M\nu_N(U_j)}{N}\right)^{\frac{1}{2}}m^{\frac{1}{2}} = \left(\frac{M\nu_N(U)m}{N}\right)^{\frac{1}{2}}.
}{momest}
Let us take $k=2+\frac{3}{2}d$ and fix $N\geq 1$ large enough, so that $N^{-\frac{1}{k}} < \bar{m}$, where $\bar{m}$ comes from the assumption (\ref{mas}). Taking $\ve=3^{-(r+2)}$ for any $r>-\log_3(\min\{1,\bar{m}\}) - 2$ in Lemma \ref{cover}  we may decompose $\mathbb{R}^d$ for a finite sum of pairways disjoint sets $S^N_{r,j}$ as follows
\eqq{
\mathbb{R}^d = \cup_{j=0}^{m_r}S^N_{r,j}, \hd  m_r \leq 3^{k(r+2)}K, \hd \diam|S^N_{r,j}| \leq 3^{-r-1} \m{ for } j \geq 1, \hd \nu_{N}(S^N_{r,0}) \leq 3^{-\frac{k(r+2)}{k-2}}.
}{coverprop}
Let us now set $\ve = N^{-\frac{1}{k}}$ and choose as $r_1$ the smallest integer such that $3^{-r_1} < \ve$. Then $3^{-(r_1-1)} \geq \ve$, which gives $3^{r_1} \leq 3 \ve^{-1}$.
Let $r_0$ be the smallest integer such that $3^{-r_0} < \ve^{\frac{k-2}{k}}$, then $3^{r_0}\leq 3 \ve^{\frac{2-k}{k}}$ and $r_0\leq r_1$.
Let us construct for any $r=r_0,\dots, r_1$ the pairwise disjoint family of sets $\{A_{r,j}\}_{j}$. For clarity of the notation we omit index $N$ in the notation of sets $\{A_{r,j}\}_{j}$. We define $A_{r_1,j} = S^N_{r_1,j}$ for $j=1,\dots,m_{r_1}$. Now having $A_{r,j}$ we construct the family $A_{r-1,j}$ in the following way. If $A_{r,j}$ is not included in $S^N_{r-1,0}$, then it intersects some $S^N_{r-1,q}$ for $q \geq 1$. We choose arbitrarily exactly one $S^N_{r-1,q}$ which it intersects. In this way we may construct a function $q^r(j)$, which for every $j=1,\dots,m_r$ specifies the index $q = 1,\dots,m_{r-1}$ for which $A_{r,j}$ intersects $S^N_{r-1,q}$, choosing arbitrarily only one such set each time. Then we define 
\eqq{
A_{r-1,z} = \cup\{A_{r,j}: q^r(j)=z\}.
}{Adef}
Thus, each $A_{r-1,q}$ is a disjoint union of $A_{r,j}$, which are intersected by $S^N_{r-1,q}$. Hence, for $j=1,\dots,m_r$
\[
\diam (A_{r,j}) \leq \diam (S^N_{r,j}) + 2\max_{q^{r+1}(p)=j} \diam(A_{r+1,p}) \leq 3^{-r-1} + 2\max_{q^{r+1}(p)=j} \diam(A_{r+1,p}).
\]
Iterating this estimate we obtain
\eqq{
\diam (A_{r,j}) \leq 3^{-r}\sum_{l=1}^{r_1-r+1}3^{-l} 2^{l-1} \leq 3^{-r}\frac{1}{2}\frac{2}{3}\frac{1}{1-\frac{2}{3}} \leq 3^{-r}.
}{diam}
Furthermore, we note that for each $r=r_0+1,\dots,r_1$
\[
\bigcup_{j=1,\dots,m_r}A_{r,j} \subset \left(S^N_{r-1,0} \cup \bigcup_{j = 1,\dots,m_{r-1}}A_{r-1,j}\right). 
\]
For each $r =r_0,\dots,r_1$ we introduce a random variable 
\[
M_r:=\sum_{j=1}^{m_r}|(\mu_N(\cdot) - \nu_{N})(A_{r,j})|.
\]
Thanks to the construction above, we are able to estimate the flat metric distance between $\mu_N$ and $\nu_N$. To that end,
fix arbitrary $\phi \in W^{1,\infty}(\R^d)$ with $\norm{\phi}_{L^{\infty}(\R^d)} \leq 1, \hd \norm{\nabla \phi}_{L^{\infty}(\R^d)} \leq 1$. For each $r=r_0,\dots,r_1$ and $j=1,\dots,m_r$, we choose (if possible) $x_{r,j} \in A_{r,j}$ and denote $\phi_{r,j}:=\phi(x_{r,j})$. Then, for $r=r_0+1,\dots,r_1$, whenever $A_{r,j} \not\subset S^N_{r-1,0}$ we have
\eqq{
\abs{\phi_{r,j} - \phi_{r-1,q^r(j)}} \leq |x_{r,j} - x_{r-1,q^r(j)}| \leq 3^{1-r},
}{est1}
where in the last inequality we used that $A_{r,j} \subset A_{r-1,q^r(j)}$ and estimate (\ref{diam}). Hence,
\begin{align*}
&\abs{\ird \phi d(\mu_N - \nu_{N})} =
\abs{\int_{\cup_{j=0}^{m_{r_{1}}}S^N_{r_1,j}} \phi d(\mu_N - \nu_{N})}\\
& \leq (\mu_{N} + \nu_{N})(S^N_{r_1,0}) + 
\abs{\sum_{j=1}^{m_{r_1}}\int_{A_{r_1,j}}(\phi(x)-\phi_{r_1,j}+\phi_{r_1,j}) d(\mu_N - \nu_{N})(x)} \\
& \leq (\mu_{N}(x) + \nu_{N})(S^N_{r_1,0}) + 
\abs{\sum_{j=1}^{m_{r_1}}\phi_{r_1,j} (\mu_N - \nu_{N})(A_{r_1,j})}
+ \sum_{j=1}^{m_{r_1}}\sup_{x\in A_{r_1,j}}|x-x_{r_1,j}|\abs{(\mu_N - \nu_{N})(A_{r_1,j})}\\
& \leq (\mu_{N} + \nu_{N})(S^N_{r_1,0})  + 
\abs{\sum_{j=1}^{m_{r_1}}\phi_{r_1,j} (\mu_N - \nu_{N})(A_{r_1,j})} + 2M 3^{-r_1},
\end{align*}
where in the last estimate we used(\ref{diam}), the fact that $A_{r_1,j}$ are disjoint and the assumption (\ref{mas}).
Further, applying (\ref{Adef}) and (\ref{est1}) and recalling $3^{-r_1} < \ve$ we get
\begin{align*}
 & \abs{\ird \phi d(\mu_N - \nu_{N})} \leq (\mu_{N} + \nu_{N})(S^N_{r_1,0}\cup S^N_{r_1-1,0}) + 
 2M \ve\\
 & +\abs{\sum_{q=1}^{m_{r_1-1}}\sum_{j:q^r_1(j) = q}(\phi_{r_1,j} - \phi_{r_1-1,j} +\phi_{r_1-1,j} )(\mu_N - \nu_{N})(A_{r_1,j})} \\
 & \leq (\mu_{N} + \nu_{N})(S^N_{r_1,0}\cup S^N_{r_1-1,0}) + 
 2M \ve
+\abs{\sum_{q=1}^{m_{r_1-1}}\phi_{r_1-1,j}(\mu_N - \nu_{N})(A_{r_1-1,q})}
+ 3^{1-r_1}M_{r_1}.
\end{align*}
Iterating this procedure up to $r=r_0$ and recalling that $|\phi| \leq 1 $, we arrive at
\[
\abs{\ird \phi d(\mu_N - \nu_{N})} \leq 2M\ve + \sum_{r=r_0}^{r_1}\left((\mu_N + \nu_N)(S^N_{r,0}) + 3^{1-r}M_r\right) + M_{r_0}. 
\]
Hence, applying (\ref{wo}) and  (\ref{momest})
\begin{align*}
    \mathbb{E}d_{F}(\mu_N,\nu_N) &\leq\\
    &2M\ve +\sum_{r=r_0}^{r_1}\mathbb{E}(\mu_N + \nu_N)(S^N_{r,0}) + \sum_{r=r_0}^{r_1} 3^{1-r}\sum_{j=1}^{m_r}\mathbb{E}|(\mu_N - \nu_{N})(A_{r,j})|+ \sum_{j=1}^{m_{r_0}}\mathbb{E}|(\mu_N - \nu_{N})(A_{r_0,j})|\\
    & \leq 2M\ve +2\sum_{r=r_0}^{r_1} \nu_{N}(S^N_{r,0}) + \sum_{r=r_0}^{r_1}3^{1-r} \left(\frac{m_{r}M\nu_N(\mathbb{R}^d)}{N}\right)^{\frac{1}{2}} + \left(\frac{m_{r_0}M\nu_N(\mathbb{R}^d)}{N}\right)^{\frac{1}{2}}.
\end{align*} 
Due to (\ref{coverprop}) and $\nu_N(\R^d)\leq M$ we further get
\[
\mathbb{E}d_{F}(\mu_N,\nu_N) \leq 2M\ve +2\sum_{r=r_0}^{r_1} 3^{-\frac{k(r+2)}{k-2}} + M\sqrt{\frac{K}{N}} \sum_{r=r_0}^{r_1} 3^{1-r}3^{\frac{k(r+2)}{2}} +M\sqrt{\frac{K}{N}}3^{\frac{k(r_0+2)}{2}}.
\]
Since
\[
\sum_{r=r_0}^{r_1}a^r = a^{r_0}\frac{1-a^{r_1-r_0+1}}{1-a} \m{ for } a\neq 1,
\]
we have
\[
\mathbb{E}d_{F}(\mu_N,\nu_N) \leq 2M\ve + 2\cdot3^{-\frac{2k}{k-2}}\frac{3^{-\frac{kr_0}{k-2}}}{1-3^{-\frac{k}{k-2}}}+ M\sqrt{\frac{K}{N}}3^{(k+1+\frac{(k-2)r_0}{2})}\frac{3^{(\frac{k-2}{2})(r_1-r_0+1)}-1}{3^{\frac{k-2}{2}}-1} + M\sqrt{\frac{K}{N}}3^{\frac{k(r_0+2)}{2}}.
\]
Applying $3^{r_0} \leq 3\ve^{\frac{2-k}{k}}$, $3^{-r_0} < \ve^{\frac{k-2}{k}}$ and $3^{r_1} \leq 3\ve^{-1}$ we get
\[
\mathbb{E}d_{F}(\mu_N,\nu_N) \leq 2M\ve + c(k)\ve + M\sqrt{\frac{K}{N}}c(k) \ve^{-\frac{k-2}{2}} +  M\sqrt{\frac{K}{N}}c(k) \ve^{-\frac{k-2}{2}}.
\]
Finally, recalling $\ve = N^{-\frac{1}{k}}$ we arrive at
\[
\mathbb{E}d_{F}(\mu_N,\nu_N) \leq C(K,M,k)\ve,
\]
which finishes the proof since $k=2+\frac{3}{2}d$.
\end{proof}

\subsection{Proof of Proposition \ref{existenceprop}}
In this section, in order to obtain the existence of solutions to (\ref{meannu}), we repeat the Banach fixed point argument, presented in \cite{JPS2025}. The only modification is the presence of the additional lower order term, which results in a bit lengthy computations.

\begin{proof}[Proof of Proposition \ref{existenceprop}]
Let us define
\[
E:=\{f \in L^\infty((0,\ t_*)\times (0,\ 1);\, W^{1,1}\cap W^{1,\infty}(\mathbb{R}^d)): f \geq 0\},
\]
and for any $\Upsilon>0$ to be determined later  denote
\[
E_\Upsilon:=\left\{f\in E:\, \norm{f}_{E}:=\Vert f\Vert_{L^\infty((0,t_{*})\times (0,1));W^{1,1}\cap W^{1,\infty}(\R^d)}\leq \Upsilon\right\}.
\]
Then, $E_\Upsilon$ is a closed subset of $L^1((0,\ t_*)\times (0,\ 1)\times \mathbb{R}^d)$ under the $L^1$ norm and $(E_\Upsilon,\norm{\cdot
}_{L^{1}((0,1)\times(0,1)\times \R^d)})$ is a complete and convex metric space.
For a given $g\in E_\Upsilon$ discuss the linear problem
\begin{equation}\label{independ2-linear}
\left\{
\begin{array}{l}
\displaystyle \partial_t f(t,x,\xi) + \divv ( f(t,x,\xi) V_1[g](t,x,\xi)) =  f(t,x,\xi) (A -V_2[g](t,x,\xi)) + \nu \Delta_x f(t,x,\xi),\\
f(0,\cdot,\cdot)=f^0
\end{array}
\right.
\end{equation}
and define its solution operator $P$ as follows $Pg = f$, where $f$ is a weak solution to (\ref{independ2-linear}). We will show that $P$ is well defined and $P:E_\Upsilon\rightarrow E_\Upsilon$ for appropriately chosen $\Upsilon>0$ and $t_{*} > 0$.

At first, we note that for any $g \in E_\Upsilon$, we obtain $V_i[g] \in L^{\infty}((0,t_*)\times (0,1);W^{1,\infty}\cap W^{1,1}(\R^d))$ for $i=1,2$. Furthermore, Lemma \ref{impoest} together with the change of variables in convolution gives the following estimates for $i=1,2$:
\begin{align}
&\abs{V_i[g]} \leq \norm{w}\norm{K_i}_{L^{\infty}(\R^d)}\norm{g}_{L^{\infty}((0,t_*)\times (0,1);L^{1}(\R^d))},\label{v1}\\
 &\norm{ V_i[g]}_{ L^{\infty}((0,t_*)\times (0,1);W^{1,\infty}(\R^d))} \leq \norm{w}\norm{K_i}_{L^{\infty}(\R^d)}\norm{g}_{L^{\infty}((0,t_*)\times (0,1);W^{1,1}(\R^d))},
\label{v2}\\
&\norm{ V_i[g]}_{ L^{\infty}((0,t_*)\times (0,1);W^{1,1}(\R^d))} \leq \norm{w}\norm{K_i}_{L^{1}(\R^d)}\norm{g}_{L^{\infty}((0,t_*)\times (0,1);W^{1,1}(\R^d))},
\label{v3}\\
&\norm{ \divv V_1[g]}_{ L^{\infty}((0,t_*)\times (0,1);W^{1,\infty}(\R^d))} \leq \norm{w}\norm{\divv K_1}_{L^{1}(\R^d)}\norm{g}_{L^{\infty}((0,t_*)\times (0,1);W^{1,\infty}(\R^d))}.\label{v4}
\end{align}
Problem \eqref{independ2-linear} is a linear equation parametrized by $\xi$, hence, for $\nu=0$ by Di Perna-Lions theory (\cite[Theorem 6.4]{Perthame}) there exists a unique solution to (\ref{independ2-linear}) in $L^{\infty}((0,t_{*})\times (0,1)\times \R^{d})$, which 
may be obtained by the method of characteristics, namely
\eqq{
f(t,\cdot,\xi)=X_g(t,\cdot\,,\xi)_{\#}\left[f^0(\cdot,\xi)\exp\left(At - \int_0^tV_2[g](s,X_g(s,x,\xi),\xi))ds\right)\right],
}{integro}
where $X_g$ denotes the flow solving the equation of  characteristic for almost all $\xi \in (0,1)$
\[
\left\{
\begin{array}{l} 
\displaystyle \frac{dX_g}{dt}(t,x,\xi)=V_1(t,X_g(t,x,\xi),\xi),\\
X_g(0,x,\xi)=x.
\end{array}
\right.
\]
Since we assume $f^0 \geq 0$, the formula (\ref{integro}) guarantees that $f \geq 0$ almost everywhere.
For $\nu>0$, we may instead use the heat kernel $G_\nu(t,x)$ and we can again classically obtain a solution through the mild formulation
\begin{equation}\label{mildformulation}
  \begin{split}
    &f(t,x,\xi)=G_\nu(t,.)\star_x f^0 (x,\xi)\\
    &\qquad+\int_0^t\int_{\R^d} G_\nu(t-s,x-y)\,\left[\divv_y\left(f(s,y,\xi)\,V_1[g](s,y,\xi)\right) + f(s,y,\xi)(A-V_2[g](s,y,\xi))\right]\,dy\,ds.
\end{split}
  \end{equation}
  To show that $f$ is nonnegative also in the case $\nu > 0$ we test the equation with the negative part of f which we denote $f_{-}$. Then using the fact that $g,K_2,w \geq 0$ and  integrating by parts we get
\[
\frac{1}{2}\partial_{t}\int_{\R^d} |f_{-}|^2(t,x,\xi)dx \leq \frac{1}{2}\norm{\divv V_1[g]}_{L^{\infty}((0,t_{*})\times (0,1)\times \R^d)}\int_{\R^d} |f_{-}|^2(t,x,\xi)dx +  A \int_{\R^d} |f_{-}|^2(t,x,\xi)dx.
\] 
Applying firstly (\ref{v4}) and subsequently Gr\"onwall inequality we  get
\[
\int_{\R^d} |f_{-}|^2(t,x,\xi)dx \leq \exp(2A+\norm{w}\norm{\divv K_1}_{L^{1}(\R^d)}\norm{g}_{E})\int_{\R^d} |f^0_{-}|(x,\xi)dx
\]
and since $f^0 \geq 0$ we obtain $f \geq 0$ also in a case of $\nu > 0$.
In order to show that the solution belongs to $E_\Upsilon$ we firstly recall classical estimates for advection-diffusion equations. Consider any $v(t,x)\in L^\infty((0,\ t_*);\;W^{1,\infty}(\R^d))$, any weak solution $u$ in $L^1_{loc}$ to
\begin{equation}
\left\{\begin{array}{l}
\partial_t u+\divv(u\,v)=\nu\,\Delta u+R(t,x),\\
u(0,x)=u^0(x).
\end{array}\right.
\label{advectiondiffusion}\end{equation}
Then if $R\in L^1((0,t_*)\times \R^d)$, we have that $u\in L^\infty((0,t_*); L^1(\R^d))$ with
\begin{equation}
\|u(t,\cdot)\|_{L^1(\R^d)}\leq \|u^0\|_{L^1(\R^d)}+\int_0^t \|R(s,\cdot)\|_{L^1(\R^d)}\,ds.\label{propL1}
\end{equation}
If $R\in L^1((0,t_*);L^{\infty}(\R^d))$, then $u\in L^\infty((0,t_*)\times \R^d)$ with
\begin{equation}
\begin{split}
  \|u(t,\cdot)\|_{L^\infty(\R^d)}\leq &\|u^0\|_{L^\infty(\R^d)}\,\exp\left(t\, \|\divv v\|_{L^\infty((0,t_*)\times \R^d)}\right)\\
  &+\int_0^t \|R(s,\cdot)\|_{L^\infty(\R^d)}\,\,\exp\left((t-s)\, \|\divv v\|_{L^\infty((0,t_*)\times \R^d)}\right)\,ds.\label{propLinfty}
\end{split}
\end{equation}
Furthermore, better regularity of $u^0$, $v$ and $R$, implies better regularity of $u$. Indeed, if $v$ is $C^1$, then $u$ is a classical solution to \eqref{advectiondiffusion} and we may differentiate to the result
\[
\partial_t \nabla u+\divv(v\,\nabla u)+\divv(\nabla v\,u)=\nu\,\Delta \nabla u+\nabla R.
\]
Hence, $\nabla u$ is a solution to \eqref{advectiondiffusion} with $\tilde R=\nabla R-\divv(\nabla v\,u)$. Assume that $u^0\in W^{1,1}\cap W^{1,\infty}(\R^d)$,  then
\[
\|\tilde R(t,\cdot)\|_{L^1(\R^d)}\leq \|\divv v(t,\cdot)\|_{W^{1,\infty}(\R^d)}\,\|u(t,\cdot)\|_{L^1(\R^d)}+\| v(t,\cdot)\|_{W^{1,\infty}(\R^d)}\,\|\nabla u(t,\cdot)\|_{L^1(\R^d)} + \norm{\nabla R(t,\cdot)}_{L^{1}(\R^d)}
\]
and
\[
\|\tilde R(t,\cdot)\|_{L^\infty(\R^d)}\leq \|\divv v(t,\cdot)\|_{W^{1,\infty}(\R^d)}\,\|u(t,\cdot)\|_{L^\infty(\R^d)}+\| v(t,\cdot)\|_{W^{1,\infty}(\R^d)}\,\|\nabla u(t,\cdot)\|_{L^\infty(\R^d)} + \norm{\nabla R(t,\cdot)}_{L^{\infty}(\R^d)}.
\]
Hence, applying (\ref{propL1}) we arrive at
\begin{align*}
    \norm{\nabla u(t,\cdot)}_{L^{1}(\R^d)} &\leq \norm{\nabla u^0}_{L^{1}(\R^d)}+\int_0^t \norm{\divv v(s,\cdot)}_{W^{1,\infty}(\R^d)}\,\norm{u(s,\cdot)}_{L^{1}(\R^d)}
+\norm{\nabla R(s,\cdot)}_{L^{1}(\R^d)}ds\\
&+ \| v\|_{L^{\infty}((0,t_{*});W^{1,\infty}(\R^d))}\int_0^t \,\|\nabla u(s,\cdot)\|_{L^1(\R^d)}ds
\end{align*}
and by Gr\"onwall lemma 
\begin{equation}
  \begin{split}
    &\|\nabla u(t,\cdot)\|_{L^1(\R^d)}\leq \|\nabla u^0\|_{L^1(\R^d)}\,\exp\left(t\,\| v\|_{L^\infty((0,t_{*});W^{1,\infty} (\R^d))}\right)\\
        &+\int_0^t \exp\left((t-s)\,\| v\|_{L^\infty((0,t_*); W^{1,\infty}(\R^d))}\right)\left(\|\divv v(s,\cdot)\|_{W^{1,\infty}(\R^d)}\| u(s,\cdot)\|_{L^1(\R^d)}+ \norm{\nabla R(s,\cdot)}_{L^{1}(\R^d)} \right)\,ds.
\end{split}\label{propW11}
  \end{equation}
Furthermore, by (\ref{propLinfty})
\begin{align*}
&\norm{\nabla u(t,\cdot)}_{L^{\infty}(\R^d)} \leq \exp\left(t\norm{\divv v}_{L^{\infty}((0,t_*)\times \R^d)}\right)  \norm{\nabla u^0}_{L^{\infty}(\R^d)} \\
&+\int_0^t \exp\left((t-s)\norm{\divv v}_{L^{\infty}((0,t_*)\times \R^d)}\right) \|\divv v(s,\cdot)\|_{W^{1,\infty}(\R^d)}\,\|u(s,\cdot)\|_{L^\infty(\R^d)}ds\\
&+\int_0^t \exp\left((t-s)\norm{\divv v}_{L^{\infty}((0,t_*)\times \R^d)}\right)\left[
\|v\|_{L^{\infty}((0,t_*);W^{1,\infty} (\R^d)}\,\|\nabla u(s,\cdot)\|_{L^\infty(\R^d)} + \norm{\nabla R(s,\cdot)}_{L^{\infty}(\R^d)}\right]ds
\end{align*}
and again by Gr\"onwall lemma we arrive at,
\begin{align}\label{gradinfty}
 \|\nabla u(t,\cdot)\|_{L^\infty(\R^d)}&\leq \|\nabla u^0\|_{L^\infty(\R^d)}\,\exp\left(t \left( \|\divv v\|_{L^\infty((0,t_{*})\times \R^d)} +\norm{v}_{L^{\infty}((0,t_*);W^{1,\infty}(\R^d))} \right)\right)\\ \nonumber
 &+\int_0^t \exp\left((t-s)\,\left(\| v\|_{L^\infty((0,t_*); W^{1,\infty}(\R^d))}+\|\divv v\|_{L^\infty((0,t_{*})\times \R^d)}\right)\right)\alpha(s) ds, 
\end{align}
where
\[
\alpha(s) := \|\divv v(s,\cdot)\|_{W^{1,\infty}(\R^d)}\|u(s,\cdot)\|_{L^{\infty}(\R^d)} + \norm{\nabla R(s,\cdot)}_{L^{\infty} (\R^d)}\,.
\]
We may now use the estimates above for the solutions to (\ref{independ2-linear}). Indeed, for a given $g\in E$ equation (\ref{independ2-linear}) may be written in the form \eqref{advectiondiffusion} where $\xi$ is only a parameter and $R$ is linear w.r.t. $f$, i.e. 
\[
v_\xi=V_1[g], \hd R_\xi(t,x) = f (A -V_2[g] ). 
\]
Using the special form of our equation and since
 we already established that the solution $f\geq 0$, we may obtain simpler estimates for $L^1$ and $L^
 \infty$ - norms of $f$. Indeed, since we assume $g,w,K_2 \geq 0$ we obtain that $R_{\xi} \leq Af$ and integrating (\ref{independ2-linear}) we arrive at 
\eqq{
\norm{f}_{L^{\infty}((0,t_*)\times (0,1); L^{1}(\R^d))} \leq e^{At_{*}}\norm{f^0}_{L^{\infty}( (0,1); L^{1}(\R^d))}.
}{l1bound}
Moreover, testing (\ref{independ2-linear}) with $f^{p-1}$ and passing to the limit with $p\rightarrow \infty$ we obtain
\[
\norm{f}_{L^{\infty}((0,t_*)\times (0,1)\times \R^d)} \leq \exp\left((\norm{\divv V_1[g]}_{L^{\infty}(0,t_{*}\times (0,1)\times \R^d)}+A)t_{*}\right)\norm{f^0}_{L^{\infty}( (0,1)\times \R^d)}.
\]

Thus, from (\ref{v2}) we infer
\eqq{
\norm{f}_{L^{\infty}((0,t_{*})\times (0,1)\times \R^d)}
\leq \exp\left(t_{*}\left(\norm{K_{1}}_{L^{\infty}(\R^d)}\norm{w}\norm{ g}_{L^{\infty}((0,t_{*})\times (0,1);W^{1,1}(\R^d))} +A\right)\right)\norm{f^0}_{L^{\infty}((0,1)\times \R^d)}.
}{finfty}
It remains to estimate gradient terms. Note that in case $\nu > 0$, in view of $g \in E$ and (\ref{v4}), by the smoothing properties of $\Delta$ the a priori estimates (\ref{propW11}) and (\ref{gradinfty}) are justified. In case $\nu=0$, in order to justify (\ref{propW11})- (\ref{gradinfty}) we may apply the vanishing viscosity method, since the estimates are $\nu$ - independent. 

Let us estimate the gradient terms of $R$. By (\ref{v1}) and (\ref{v3})
\begin{align}\label{Rgrad1}
  \norm{\nabla R(s,\cdot)}_{L^{\infty}((0,1);L^{1}(\R^d))} & \leq \norm{\nabla f(s,\cdot,\cdot)}_{L^{\infty}((0,1);L^{1}(\R^d))}\left(A+\norm{w}\norm{K_{2}}_{L^{\infty}(\R^d)}\norm{g}_{E}\right)\\ \nonumber  
  &+\norm{f(s,\cdot,\cdot)}_{L^{\infty}( (0,1)\times\R^d)}\norm{g}_{E}   \norm{w}\norm{K_{2}}_{L^{1}(\R^d)}
\end{align}
and similarly 
by (\ref{v1}) and (\ref{v2})
\begin{align}\label{Rgradinfty}
\norm{\nabla R(s,\cdot)}_{L^{\infty}((0,1);L^{\infty}(\R^d))} &\leq \norm{\nabla f(s,\cdot,\cdot)}_{L^{\infty}((0,1);L^{\infty}(\R^d))}\left(A+\norm{w}\norm{K_{2}}_{L^{\infty}(\R^d)}\norm{g}_{E}\right) \\ \nonumber
& +\norm{f(s,\cdot,\cdot)}_{L^{\infty}( (0,1)\times\R^d)}\norm{g}_{E}   \norm{w}\norm{K_{2}}_{L^{\infty}(\R^d)}.
\end{align}
Hence, combining
(\ref{propW11}) with (\ref{v2}), (\ref{v4}) and  (\ref{Rgrad1}) leads to

\[
  \begin{split}
    &\|\nabla f(t,\cdot,\cdot)\|_{L^{\infty}((0,1); L^1(\R^d))}\leq \|\nabla f^0\|_{L^{\infty}((0,1);L^1(\R^d))}\,\exp\left(t \norm{w}\norm{K_1}_{L^{\infty}(\R^d)}\norm{g}_{E}\right)\\
        &+\int_0^t \exp\left((t-s)\,\norm{w}\norm{K_1}_{L^{\infty}(\R^d)}\norm{g}_{E}\right)\left( \norm{\nabla f(s,\cdot,\cdot)}_{L^{\infty}((0,1);L^{1}(\R^d))}\left(A+\norm{w}\norm{K_{2}}_{L^{\infty}(\R^d)}\norm{g}_{E} \right)+ \beta(s) \right)\,ds,
\end{split}
\]
where
\[
\beta(s) := \norm{w}\norm{\divv K_1}_{L^{1}(\R^d)}\norm{g}_{E}\| f(s,\cdot.\cdot)\|_{L^{\infty}((0,1);L^1(\R^d))}+\norm{f(s,\cdot,\cdot)}_{L^{\infty}( (0,1)\times\R^d)}\norm{g}_{E}   \norm{w}\norm{K_{2}}_{L^{1}(\R^d)}.
\]
Using (\ref{l1bound}) and (\ref{finfty}) we obtain  
\[
\beta(s) \leq \left(C_1e^{At_{*}} + C_2e^{C_{3}\norm{g}_{E}t_{*}} \right)\norm{g}_{E},
\]
where $C_{1},C_{2},C_{3}$ are positive constants dependent only on norms of $K_{1},K_2,w$ and $f^0$.
Applying Gr\"onwall estimate we arrive at
\begin{align}\label{gradl1}
&\|\nabla f(t,\cdot,\cdot)\|_{L^{\infty}((0,1); L^1(\R^d))} \leq \\ \nonumber
& \left( \|\nabla f^0\|_{L^{\infty}((0,1); L^1(\R^d))} + t_{*}\left(C_1e^{At_{*}} + C_2e^{C_{3}\norm{g}_{E}t_{*}} \right)\norm{g}_{E}\right) \exp\left(\left(A+\norm{w}(\norm{K_{1}}_{L^{\infty}}+\norm{K_{2}}_{L^{\infty}(\R^d)})\norm{g}_{E}\right)t\right).
\end{align}

Similarly we deal with the $L^{\infty}$ - norm of the gradient. Combining (\ref{gradinfty}) with (\ref{Rgradinfty}) and using (\ref{v2}), (\ref{v4}) we obtain
\[
  \begin{split}
    &\|\nabla f(t,\cdot,\cdot)\|_{L^{\infty}((0,1)\times \R^d )}\leq \|\nabla f^0\|_{L^{\infty}((0,1)\times \R^d )}\,\exp\left(2t \norm{w}\norm{K_1}_{L^{\infty}(\R^d)}\norm{g}_{E}\right)\\
        &+\int_0^t \exp\left(2(t-s)\,\norm{w}\norm{K_1}_{L^{\infty}(\R^d)}\norm{g}_{E}\right)\left( \norm{\nabla f(s,\cdot,\cdot)}_{L^{\infty}((0,1)\times\R^d)}\left(A+\norm{w}\norm{K_{2}}_{L^{\infty}(\R^d)}\norm{g}_{E} \right)+ \gamma(s) \right)\,ds,
\end{split}
\]
where
\[
\gamma(s) := \norm{f(s,\cdot,\cdot)}_{L^{\infty}( (0,1)\times\R^d)}\norm{g}_{E}   \norm{w}(\norm{K_{2}}_{L^{\infty}(\R^d)}+\norm{\divv K_1}_{L^{1}(\R^d)}).
\]

Note that by (\ref{finfty}) 
\[
\gamma(s) \leq C_4e^{ C_5\norm{g}_{E}t_{*}}\norm{g}_{E},
\]
where $C_{4},C_{5}$ are positive constants dependent only on norms of $K_{1},K_2,w$ and $f^0$.
Again by Gr\"onwall estimate 
\begin{align}\label{gradlinfty}
    &\|\nabla f(t,\cdot,\cdot)\|_{L^{\infty}((0,1)\times\R^d)} \leq \\ \nonumber
&\left( \|\nabla f^0\|_{L^{\infty}((0,1)\times \R^d)} + t_{*}C_4e^{ C_5\norm{g}_{E}t_{*}}\norm{g}_{E}\right) \exp\left(2\left(A+\norm{w}(\norm{K_{1}}_{L^{\infty}}+\norm{K_{2}}_{L^{\infty}(\R^d)})\norm{g}_{E}\right)t\right).    
\end{align}
 Recalling the definition of the solution operator $P$, the estimates (\ref{l1bound}), (\ref{finfty}), (\ref{gradl1}) and (\ref{gradlinfty}) shows that if $\Upsilon> \norm{f^0}_{L^{\infty}((0,1);W^{1,1}\cap W^{1,\infty}(\R^d))}e^{A}$, we obtain that $Pg \in E_{\Upsilon}$ for sufficiently small $t_{*}$ which depends on $\Upsilon$, $A,\norm{w},\norm{f^0}_{L^{\infty}((0,1);W^{1,1}\cap W^{1,\infty}(\R^d))}$ and norms of $K_{1}$ and $K_{2}$.

Our final step is to show that $P$ is a contraction in $(E_{\Upsilon}, L^{1}((0,t_{*})\times (0,1)\times \R^d))$ for $t_*$ small enough. Let us consider $g_1,g_2\in E_\Upsilon$ and note that $Pg_1 - Pg_2$ satisfies
\[
\partial_t(Pg_1-Pg_2)+\divv(V_1[g_2]\,(Pg_1-Pg_2))+\divv((V_1[g_1]-V_1[g_2])Pg_1)
\]
\[
=\nu\Delta (Pg_1-Pg_2) + (A-V_2[g_1])(Pg_1-Pg_2) - Pg_2(V_2[g_1] - V_2[g_2])
\]
with zero initial condition.
Applying (\ref{propL1}) and taking $L^{1}$- norm with respect to $\xi$ leads
\begin{align*}
   &\Vert (Pg_1-Pg_2)(t,\cdot,\cdot)\Vert_{L^1( (0,1)\times \R^d)}\leq \norm{\divv((V_1[g_1]-V_1[g_2])Pg_1)}_{L^1((0,t)\times (0,1)\times \R^d)} \\
 &  + \norm{(A-V_2[g_1])(Pg_1-Pg_2)}_{L^1((0,t)\times (0,1)\times \R^d)}
+\norm{Pg_2(V_2[g_1] - V_2[g_2])}_{L^1((0,t)\times (0,1)\times \R^d)} =:\sum_{j=1}^3 J_j.
\end{align*}
Let us estimate term by term. From the linearity of $V_1$ and Lemma \ref{impoest} we have
\begin{align*}
J_{1}&\leq \Vert \divv(V_1[g_1-g_2])\Vert_{L^1((0,t)\times (0,1)\times \R^d)}\Vert P[g_1]\Vert_{{L^{\infty}((0,t)\times (0,1)\times \R^d)}} \\
& +\Vert V_1[g_1-g_2]\Vert_{L^1((0,t)\times (0,1)\times \R^d)}\Vert \nabla P[g_1]\Vert_{{L^{\infty}((0,t)\times (0,1)\times \R^d)}}  \\
 & \leq \norm{w}(\norm{\divv K_1}_{L^{1}(\R^d)}+\norm{ K_1}_{L^{1}(\R^d)})
 \norm{g_1-g_2}_{L^1((0,t)\times (0,1)\times \R^d)}\Vert P[g_1]\Vert_{E}.   
\end{align*}
Applying (\ref{v1}) we have
\[
J_2 \leq \norm{Pg_1-Pg_2}_{L^1((0,t)\times (0,1)\times \R^d)}\left(A + \norm{w}\norm{K_{2}}_{L^{\infty}(\R^d)}\norm{g_1}_{L^{\infty}((0,t)\times (0,1);L^{1}(\R^d))},\right)
\]
Finally, by linearity of $V_2$ and Lemma \ref{impoest}
\begin{align*}
   J_3 &\leq \norm{V_2[g_1-g_2]}_{L^1((0,t)\times (0,1)\times \R^d)}\Vert P[g_2]\Vert_{{L^{\infty}((0,t)\times (0,1)\times \R^d)}} \\
   &\leq \norm{w}\norm{K_{2}}_{L^{\infty}(\R^d)}\norm{g_1-g_2}_{L^1((0,t)\times (0,1)\times \R^d)}\Vert P[g_2]\Vert_{{L^{\infty}((0,t)\times (0,1)\times \R^d)}}.
\end{align*}
Thus,
\begin{align*}
    \Vert (Pg_1-Pg_2)(t,\cdot,\cdot)\Vert_{L^1( (0,1)\times \R^d)}&\leq  \Upsilon\norm{w}\left(\norm{\divv K_1}_{L^1(\R^d)} + \norm{ K_2}_{L^\infty(\R^d)}\right)\norm{g_1-g_2}_{L^1((0,t)\times (0,1)\times \R^d)} \\
    &+ \left(A + \norm{w}\norm{K_{2}}_{L^{\infty}(\R^d)}\Upsilon\right)\norm{Pg_1-Pg_2}_{L^1((0,t)\times (0,1)\times \R^d)}.
\end{align*}
Applying Gr\"onwall inequality we get
\begin{align*}
  &\Vert (Pg_1-Pg_2)(t,\cdot,\cdot)\Vert_{L^1( (0,1)\times \R^d)} \leq \\
  &\exp\left(A + \norm{w}\norm{K_{2}}_{L^{\infty}(\R^d)}\Upsilon\right)\Upsilon\norm{w}\left(\norm{\divv K_1}_{L^1(\R^d)} + \norm{ K_2}_{L^\infty(\R^d)}\right)\norm{g_1-g_2}_{L^1((0,t_{*})\times (0,1)\times \R^d)} 
\end{align*}
and thus
\begin{align*}
 &\Vert (Pg_1-Pg_2)\Vert_{L^1( (0,t_{*})\times (0,1)\times \R^d)} \leq \\  
 &  t_{*}\exp\left(A + \norm{w}\norm{K_{2}}_{L^{\infty}(\R^d)}\Upsilon\right)\Upsilon\norm{w}\left(\norm{\divv K_1}_{L^1(\R^d)} + \norm{ K_2}_{L^\infty(\R^d)}\right)\norm{g_1-g_2}_{L^1((0,t_{*})\times (0,1)\times \R^d)}. 
\end{align*}
Choosing $t_{*}$ dependent on $\Upsilon$, $A,\norm{w}$ and norms of $K_{1}$ and $K_{2}$, sufficiently small we obtain that $P$ is contractive on $(E_{\Upsilon}, L^{1}((0,t_{*})\times (0,1)\times \R^d))$. By the Banach fixed point theorem there exists a unique fixed point  of $P$, leading to a weak solution of \eqref{meannu} in $[0, t_*]$. The standard extension argument allows obtaining a weak solution on any finite time interval.
\end{proof}

\end{document}